\documentclass[a4paper]{amsart}

\usepackage[all]{xy}
\usepackage{amsmath}
\usepackage{amssymb}
\usepackage{amsthm}
\usepackage{latexsym}
\usepackage{enumerate}
\usepackage{prettyref}
\usepackage{hyperref}
\usepackage{bm}
\usepackage{mathrsfs}

\newtheorem{theorem}{Theorem}[section]
\newrefformat{thm}{\hyperref[{#1}]{Theorem~\ref*{#1}}}

\newrefformat{def}{\hyperref[{#1}]{Definition~\ref*{#1}}}
\newtheorem{lemma}[theorem]{Lemma}
\newrefformat{lem}{\hyperref[{#1}]{Lemma~\ref*{#1}}}
\newtheorem{proposition}[theorem]{Proposition}
\newrefformat{prop}{\hyperref[{#1}]{Proposition~\ref*{#1}}}
\newtheorem{corollary}[theorem]{Corollary}
\newrefformat{cor}{\hyperref[{#1}]{Corollary~\ref*{#1}}}
\newtheorem{remark}[theorem]{Remark}
\newrefformat{rem}{\hyperref[{#1}]{Remark~\ref*{#1}}}
{
\newtheorem{examplecore}[theorem]{Example}}
\newrefformat{ex}{\hyperref[{#1}]{Example~\ref*{#1}}}

\newenvironment{proofof}[1]{\vspace{.2cm}\noindent\textsc{Proof of
    #1:}}{\hspace*{\fill} $\blacksquare$\par\vspace{.1cm}}

\newcommand{\op}{\operatorname}

\begin{document}

\title{Variations in \texorpdfstring{$\mathbb{A}^1$}{A1} on a theme of Mohan Kumar}  

\author{Matthias Wendt}

\date{September 2018}

\address{Matthias Wendt, Institut f\"ur Mathematik, Universit\"at Osnabr\"uck, Albrechtstra\ss{}e 28a, 49076 Osnabr\"uck, Germany}
\email{m.wendt.c@gmail.com}

\subjclass[2010]{14F42 (13C13, 19A13)}
\keywords{stably free modules, $\mathbb{A}^1$-homotopy theory, obstruction theory}

\begin{abstract}
For every prime $p$, Mohan Kumar constructed examples of stably free modules of rank $p$ on suitable $p+1$-dimensional smooth affine varieties. This note discusses how to detect the corresponding unimodular rows by an explicit motivic cohomology group. Using the recent developments in the $\mathbb{A}^1$-obstruction classification of vector bundles, this provides an alternative proof of non-triviality of Mohan Kumar's stably free modules. The reinterpretation of Mohan Kumar's examples also allows to produce interesting examples of stably trivial torsors for other algebraic groups.
\end{abstract}

\maketitle
\setcounter{tocdepth}{1}
\tableofcontents

\section{Introduction}

The starting point of  this note is the following theorem of Mohan Kumar \cite{mohan:kumar} which provides important examples of stably free modules of high rank:

\begin{theorem}[Mohan Kumar]
\label{thm:mk1}
Let $k$ be an algebraically closed field. For every prime $p$, there exists   a $(p+1)$-dimensional smooth affine variety $X=\op{Spec}A$ over $k(T)$ and a nontrivial stably free $A$-module of rank $p$, given by a unimodular row of length $p+1$.
\end{theorem}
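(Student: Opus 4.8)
The plan is to prove the theorem by constructing $X$ and $P$ along Mohan Kumar's lines and then establishing non-triviality of $P$ via $\mathbb{A}^1$-obstruction theory, the obstruction being pinned down in an explicit motivic cohomology group. For the construction I would start from an affine model of the motivic sphere $\mathbb{A}^{p+1}\setminus 0$, namely the smooth affine quadric
\[
Q_{2p+1}=\bigl\{\,{\textstyle\sum_{i=0}^{p}}x_iy_i=1\,\bigr\}\subset\mathbb{A}^{2p+2},
\]
which is $\mathbb{A}^1$-equivalent to $\mathbb{A}^{p+1}\setminus 0$ and carries the tautological unimodular row $(x_0,\dots,x_p)$ of length $p+1$. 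Since $\dim Q_{2p+1}=2p+1$ is far too large, I would cut it down to a smooth affine $(p+1)$-fold $X=\op{Spec}A$ by $p$ sufficiently general hypersurface sections defined over $k(T)$ (Bertini keeping $X$ smooth and affine), set $\rho=(a_0,\dots,a_p)$ equal to the restricted row, and let $P=\ker(A^{p+1}\xrightarrow{\rho}A)$, a stably free $A$-module of rank $p$. It then remains to show that $P$ is not free, i.e.\ that $\rho$ is not completable.

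Next, I would reduce this to the non-vanishing of a cohomological invariant. Since $X$ is smooth affine over the perfect field $k(T)$ with $p+1\ge 3$, the work of Morel and Asok--Fasel identifies unimodular rows of length $p+1$ over $A$, up to elementary equivalence, with $\mathbb{A}^1$-homotopy classes $[X,\mathbb{A}^{p+1}\setminus 0]$ (using $Q_{2p+1}\simeq\mathbb{A}^{p+1}\setminus 0$). By Morel's computation $\mathbb{A}^{p+1}\setminus 0$ is $(p-1)$-connected with $\boldsymbol{\pi}_p^{\mathbb{A}^1}(\mathbb{A}^{p+1}\setminus 0)=\mathbf{K}^{MW}_{p+1}$, so running the Moore--Postnikov tower against the $(p+1)$-dimensional scheme $X$ leaves only two stages and produces a first, Euler-type invariant
\[
e\colon[X,\mathbb{A}^{p+1}\setminus 0]\to H^p(X,\mathbf{K}^{MW}_{p+1}).
\]
Completable rows have $e$-invariant inside the image $N$ of $[X,\mathrm{GL}_{p+1}]\to H^p(X,\mathbf{K}^{MW}_{p+1})$, and by the fibre sequence $\mathrm{GL}_p\to\mathrm{GL}_{p+1}\to\mathbb{A}^{p+1}\setminus 0$ together with Morel's unstable computations (a motivic avatar of Bott's $\pi_{2n}(U(n))=\mathbb{Z}/n!$) this $N$ is contained in $p\cdot H^p(X,\mathbf{K}^{MW}_{p+1})$. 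Hence it suffices to show that $e(\rho)$ is non-zero modulo $p$, and for this, modulo $p$ in the $\mathbf{K}^M$-part $H^p(X,\mathbf{K}^M_{p+1})$, which by the Gersten/coniveau filtration is a subquotient of the motivic cohomology group $H^{2p+1}_{\mathcal{M}}(X,\mathbb{Z}(p+1))$ --- the "explicit motivic cohomology group" of the abstract. The primality of $p$ should enter here, through $p\mid\binom{p}{i}$ for $0<i<p$ and the divisibility of $N$, in the spirit of Suslin's completability criterion for rows $(a_0,a_1^{r_1},\dots,a_n^{r_n})$ with $n!\mid\prod r_i$.

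The remaining step, which I expect to be the main obstacle, is to prove that this class survives the drop in dimension from $2p+1$ to $p+1$: on $Q_{2p+1}$ the class of $\rho$ is the tautological generator, but there is no formal Lefschetz principle for these non-proper Chow-type groups, so a genuine argument is needed --- and this is exactly why the base field is $k(T)$ rather than $k$. Concretely, I would arrange the general sections so that the surviving class in $H^{2p+1}_{\mathcal{M}}(X,\mathbb{Z}(p+1))$ visibly involves the weight-one symbol $\{T\}$; passing from $k$ to $k(T)$ raises the transcendence degree by one, which is necessary, since over (say) $\bar{\mathbb{F}}_p$ rigidity can force the relevant obstruction groups to vanish. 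Its non-triviality would then be detected by a residue at, or a specialization to, a closed point $T=t_0$, reducing to an assertion over $k$ alone. A final bookkeeping point is to make the passage from "$e(\rho)\notin N$" to "$P$ not free" watertight, controlling the difference between $\mathbb{A}^1$-homotopy classes and isomorphism classes of the associated modules ($\mathrm{Um}_{p+1}/E_{p+1}$ versus $\mathrm{Um}_{p+1}/\mathrm{GL}_{p+1}$) via van der Kallen's group structure and Suslin's results in this dimension range.
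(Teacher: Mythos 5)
Your general framing---reduce non-triviality to a cohomological invariant valued in $\op{H}^p(X,\mathbf{K}^{\op{MW}}_{p+1})$, observe that completable rows land in (roughly) the $p!$-divisible part via the stabilization $\bm{\pi}_p^{\mathbb{A}^1}({\op{B}}\op{SL}_{p+1})\to \mathbf{K}^{\op{MW}}_{p+1}$, and detect the row modulo $p$ in a Milnor-K/motivic cohomology group---matches the philosophy of the paper. But there is a genuine gap at the very first step, the construction of $X$, and it is exactly the step you flagged as ``the main obstacle.'' Cutting $\op{Q}_{2p+1}$ down by $p$ general hypersurface sections will not produce the required class. The group $\op{H}^p(\op{Q}_{2p+1},\mathbf{K}^{\op{M}}_{p+1})\cong\op{K}^{\op{M}}_0(k(T))=\mathbb{Z}$ is concentrated in the ``tautological'' degree and there is no Lefschetz-type statement pushing it to a generic slice; quite the opposite, for a sufficiently general complete intersection of $Q_{2p+1}$ down to dimension $p+1$ one expects the restriction of the tautological class in $\op{H}^p(-,\mathbf{K}^{\op{M}}_{p+1}/p)$ to vanish (think of the analogous collapse of top Chow groups of general affine slices). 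Your suggestion to ``arrange the general sections so that the class visibly involves the symbol $\{T\}$'' is not a mechanism; nothing in Bertini lets you force torsion into a top-codimension Chow-type group.

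What is actually missing is Mohan Kumar's construction, which is a very non-generic one. One needs a hypersurface $V(F)\subset\mathbb{P}^{p+1}$ over $k(T)$ such that every closed point of $V(F)$ has degree divisible by $p$, so that $X=\mathbb{P}^{p+1}\setminus V(F)$ has $\op{CH}^{p+1}(X)\cong\mathbb{Z}/p\mathbb{Z}$. This is where $k(T)$ enters, for an entirely different reason than the rigidity/transcendence-degree heuristic you give: over an algebraically closed $k$ there are no degree-$p$ field extensions at all, while $k(T)$ admits a degree-$p$ polynomial $f$ with $f(T^{p^p})$ irreducible, which feeds the recursive definition of $F_n$. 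One then covers $X=Y\cup Z$ by complements of hypersurfaces that \emph{do} contain rational points (so $\op{CH}^{p+1}(Y)=\op{CH}^{p+1}(Z)=0$), takes the $(p+1)$-dimensional smooth affine variety $Y\cap Z$, and produces the unimodular row from a regular sequence cutting out a rational point $y\in Y\setminus Z$. The class in $\op{H}^p(Y\cap Z,\mathbf{K}^{\op{M}}_{p+1}/p)$ is then detected not by restriction from a quadric but by the Mayer--Vietoris boundary $\op{H}^p(Y\cap Z,\mathbf{K}^{\op{M}}_{p+1}/p)\to\op{CH}^{p+1}(X)/p\cong\mathbb{Z}/p\mathbb{Z}$, which sends it to the class of $[y]$. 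Without this geometric input and the Mayer--Vietoris detection the invariant you define has no reason to be nonzero, so the argument does not close.
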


The main goal of the present note is to reinterpret the geometric constructions underlying the results in \cite{mohan:kumar}; here is a short outline of the basic ideas: The unimodular row defining the stably free module can be viewed as a morphism $X\to\mathbb{A}^{p+1}\setminus\{0\}$. In the setting of $\mathbb{A}^1$-homotopy theory, we have a composition of maps
\[
[X,\mathbb{A}^{p+1}\setminus\{0\}]_{\mathbb{A}^1}\to \op{H}^p_{\op{Nis}}(X,\mathbf{K}^{\op{MW}}_{p+1})\to \op{H}^p_{\op{Nis}}(X,\mathbf{K}^{\op{M}}_{p+1}/p!). 
\]
Some information on the relevant homotopy sheaves is provided in Section~\ref{sec:prelims}. For now, we note that the first non-trivial $\mathbb{A}^1$-homotopy sheaf of $\mathbb{A}^{p+1}\setminus\{0\}$ is $\bm{\pi}_p^{\mathbb{A}^1}(\mathbb{A}^{p+1}\setminus\{0\})\cong\mathbf{K}^{\op{MW}}_{p+1}$, and the first map in the above composition is induced from the corresponding morphism $\mathbb{A}^{p+1}\setminus\{0\}\to \op{K}(\mathbf{K}^{\op{MW}}_{p+1},p)$ which is the first nontrivial stage of the Postnikov tower for $\mathbb{A}^{p+1}\setminus\{0\}$. The second map is induced from the composition $\mathbf{K}^{\op{MW}}_{p+1}\to \mathbf{K}^{\op{M}}_{p+1}\to\mathbf{K}^{\op{M}}_{p+1}/p!$ of the quotient by $\eta$ and the quotient by $p!$. Now the variety $X$ constructed by Mohan Kumar arises from a Zariski covering of another variety $X'$, and the image of the unimodular row in $\op{H}^p_{\op{Nis}}(X,\mathbf{K}^{\op{M}}_{p+1}/p!)$ can be checked to be non-trivial because it has non-trivial image under the connecting map $\op{H}^p_{\op{Nis}}(X,\mathbf{K}^{\op{M}}_{p+1}/p!)\to\op{CH}^{p+1}(X')/p!$ of the Mayer--Vietoris sequence. By the recent computations due to Asok and Fasel in \cite{AsokFaselSpheres}, we know  that there is a natural map 
\[
\op{H}^p_{\op{Nis}}(X,\mathbf{K}^{\op{M}}_{p+1}/p!)\to \op{H}^p_{\op{Nis}}(X,\bm{\pi}^{\mathbb{A}^1}_p({\op{B}}\op{SL}_p)),
\]
and some exact sequence chasing plus the above information can be used to show that the lifting class of the composition $X\to\mathbb{A}^{p+1}\setminus\{0\}\to{\op{B}}\op{SL}_p$ yields a non-trivial class in $\op{H}^p_{\op{Nis}}(X,\bm{\pi}^{\mathbb{A}^1}_p({\op{B}}\op{SL}_p))$. This analysis provides an alternative $\mathbb{A}^1$-homotopical proof of the non-triviality of the stably free modules constructed by Mohan Kumar, cf.~Theorem~\ref{thm:mkalt} and the discussion in Section~\ref{sec:comparison}.

The relevance of this reformulation comes from the recent approach to torsor classification via $\mathbb{A}^1$-homotopy theory: the representability results \cite{gbundles,gbundles2} together with obstruction-theoretic methods \cite{MField,AsokFasel} relate the classification of $G$-torsors over smooth affine varieties to Nisnevich cohomology with coefficients in $\mathbb{A}^1$-homotopy sheaves of the classifying space of $G$. The above reinterpretation explains exactly how Mohan Kumar's stably free modules fit into the $\mathbb{A}^1$-topological approach to vector bundle classification. Moreover, there are several other classifying spaces of algebraic groups where quotients of Milnor K-theory appear in $\mathbb{A}^1$-homotopy sheaves. The above $\mathbb{A}^1$-topological construction of stably free modules can then be adapted to provide stably trivial torsors for other algebraic groups. For example, a second class of examples of stably trivial torsors, for the symplectic groups, follows from the vector bundle case by some further exact sequence chasing, cf.~Theorem~\ref{thm:sp}: 

\begin{theorem}
Let $k$ be an algebraically closed field of characteristic $\neq 2$. For every odd prime $p$, there exists a $p+1$-dimensional smooth affine variety over $k(T)$ and a stably trivial non-trivial $\op{Sp}_{p-1}$-torsor over it. Clearing denominators, there exists a $p+2$-dimensional smooth affine variety over $k$ with a stably trivial non-trivial $\op{Sp}_{p-1}$-torsor over it.
\end{theorem}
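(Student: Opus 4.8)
The plan is to run the $\mathbb{A}^1$-obstruction-theoretic argument behind Theorem~\ref{thm:mkalt} with the fibre sequence $\op{SL}_p\to\op{SL}_{p+1}\to\op{SL}_{p+1}/\op{SL}_p\simeq_{\mathbb{A}^1}\mathbb{A}^{p+1}\setminus\{0\}$ replaced by a symplectic analogue, and then to transfer the required non-triviality back to the vector bundle case through a map of fibre sequences. Since $p$ is odd, $p+1$ is even; $\op{Sp}_{p+1}$ acts transitively on the variety $W$ of symplectic hyperbolic pairs $(v,w)$ with $\langle v,w\rangle=1$, with stabiliser conjugate to $\op{Sp}_{p-1}$, and the first projection $W\to\mathbb{A}^{p+1}\setminus\{0\}$, $(v,w)\mapsto v$, is a Zariski-locally trivial affine bundle, so $W\simeq_{\mathbb{A}^1}\mathbb{A}^{p+1}\setminus\{0\}$ is $\mathbb{A}^1$-$(p-1)$-connected with $\bm{\pi}_p^{\mathbb{A}^1}(W)\cong\mathbf{K}^{\op{MW}}_{p+1}$ and carries a classifying map $c\colon W=\op{Sp}_{p+1}/\op{Sp}_{p-1}\to{\op{B}}\op{Sp}_{p-1}$ of the torsor $\op{Sp}_{p+1}\to W$. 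First I would fix compatible embeddings $\op{Sp}_{p-1}\subseteq\op{SL}_p$, $\op{Sp}_{p+1}\subseteq\op{SL}_{p+1}$ and record the commutative diagram of fibre sequences
\[
\begin{array}{ccccc}
\op{Sp}_{p-1} & \to & \op{Sp}_{p+1} & \to & W \\
\downarrow & & \downarrow & & \downarrow \\
\op{SL}_p & \to & \op{SL}_{p+1} & \to & \op{SL}_{p+1}/\op{SL}_p ,
\end{array}
\]
whose right vertical map is an $\mathbb{A}^1$-weak equivalence (both total spaces are affine bundles over $\mathbb{A}^{p+1}\setminus\{0\}$ compatibly with the projections). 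Passing to boundary maps on $\bm{\pi}_p^{\mathbb{A}^1}$ then identifies $\partial_{\op{Sp}}\colon\mathbf{K}^{\op{MW}}_{p+1}\cong\bm{\pi}_p^{\mathbb{A}^1}(W)\to\bm{\pi}_{p-1}^{\mathbb{A}^1}(\op{Sp}_{p-1})=\bm{\pi}_p^{\mathbb{A}^1}({\op{B}}\op{Sp}_{p-1})$, after composing with the map $\bm{\pi}_{p-1}^{\mathbb{A}^1}(\op{Sp}_{p-1})\to\bm{\pi}_{p-1}^{\mathbb{A}^1}(\op{SL}_p)$ induced by $\op{Sp}_{p-1}\subseteq\op{SL}_p$, with the corresponding boundary map $\partial_{\op{SL}}$ of the lower sequence.

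Next I would feed in Mohan Kumar's data: let $X=\op{Spec}A$ be the $(p+1)$-dimensional smooth affine $k(T)$-variety of Theorem~\ref{thm:mk1} and $v\colon X\to\mathbb{A}^{p+1}\setminus\{0\}$ the unimodular row. Unimodularity makes $\langle v,-\rangle\colon\mathcal{O}_X^{p+1}\to\mathcal{O}_X$ surjective, so the $w$ with $\langle v,w\rangle=1$ form a torsor under a rank-$p$ bundle, hence (as $X$ is affine) admit a section; this lifts $v$ to $\widetilde v=(v,w)\colon X\to W$, uniquely up to $\mathbb{A}^1$-homotopy, and $f:=c\circ\widetilde v$ classifies an $\op{Sp}_{p-1}$-torsor $P_v$ on $X$. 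For stable triviality, observe that adjoining a hyperbolic plane, i.e.\ pushing $P_v$ forward along $\op{Sp}_{p-1}\hookrightarrow\op{Sp}_{p+1}$, is classified by $X\xrightarrow{\widetilde v}W\xrightarrow{c}{\op{B}}\op{Sp}_{p-1}\to{\op{B}}\op{Sp}_{p+1}$, and the universal $\op{Sp}_{p+1}$-torsor over $W=\op{Sp}_{p+1}/\op{Sp}_{p-1}$ obtained this way, namely $(\op{Sp}_{p+1}\times\op{Sp}_{p+1})/\op{Sp}_{p-1}$, is trivial via $[g,h]\mapsto(g\op{Sp}_{p-1},gh)$; hence $P_v$ becomes trivial after a single hyperbolic stabilisation, so it is stably trivial.

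It remains to show that $P_v$ is non-trivial, i.e.\ that $f\colon X\to{\op{B}}\op{Sp}_{p-1}$ is not $\mathbb{A}^1$-null-homotopic. Since $f$ factors through the $\mathbb{A}^1$-$(p-1)$-connected space $W$, its primary obstruction to being null lies in $\op{H}^p_{\op{Nis}}(X,\bm{\pi}_p^{\mathbb{A}^1}({\op{B}}\op{Sp}_{p-1}))$ and, because $c$ is the classifying map of the fibre sequence $\op{Sp}_{p-1}\to\op{Sp}_{p+1}\to W$ (so $c_*=\partial_{\op{Sp}}$ on $\bm{\pi}_p^{\mathbb{A}^1}$), it equals $\partial_{\op{Sp}*}(\alpha)$, where $\alpha\in\op{H}^p_{\op{Nis}}(X,\mathbf{K}^{\op{MW}}_{p+1})$ is the class underlying $v$, i.e.\ Mohan Kumar's class; this vanishes if $f$ is $\mathbb{A}^1$-null. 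Applying $\op{H}^p_{\op{Nis}}(X,-)$ to the square of boundary maps from the first step, the image of $\partial_{\op{Sp}*}(\alpha)$ under $\op{H}^p_{\op{Nis}}(X,\bm{\pi}_{p-1}^{\mathbb{A}^1}(\op{Sp}_{p-1}))\to\op{H}^p_{\op{Nis}}(X,\bm{\pi}_{p-1}^{\mathbb{A}^1}(\op{SL}_p))$ is $\partial_{\op{SL}*}(\alpha)$, the lifting class of $X\to\mathbb{A}^{p+1}\setminus\{0\}\to{\op{B}}\op{SL}_p$, which is non-zero by Theorem~\ref{thm:mkalt} (it maps to the non-zero image of Mohan Kumar's class in $\op{CH}^{p+1}(X')/p!$). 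Hence $\partial_{\op{Sp}*}(\alpha)\neq 0$, so $f$ is not $\mathbb{A}^1$-null and $P_v$ is non-trivial. For the final assertion, write $A$ as the base change to $k(T)$ of a smooth affine $\mathcal{O}(U)$-algebra for a suitable affine open $U\subseteq\mathbb{A}^1_k$; shrinking $U$ so that smoothness, the torsor $P_v$ and its stable trivialisation all spread out gives a $(p+2)$-dimensional smooth affine $k$-variety with a stably trivial $\op{Sp}_{p-1}$-torsor whose generic fibre is $P_v$, hence which is non-trivial.

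The hard part will be the first step: choosing the subgroups and basepoints so that the diagram of fibre sequences genuinely commutes and the induced map of base spaces is an $\mathbb{A}^1$-weak equivalence, and then checking carefully that the resulting square of boundary homomorphisms on $\mathbb{A}^1$-homotopy sheaves really does identify $\partial_{\op{Sp}}$ with $\partial_{\op{SL}}$ after the change of group — this is the \emph{exact sequence chasing} that transfers the non-triviality from the vector bundle case. A subsidiary point is to make sure the obstruction-theoretic package (representability of $\op{Sp}_{p-1}$-torsors on smooth affine $k(T)$-schemes, the Postnikov tower of ${\op{B}}\op{Sp}_{p-1}$, and the identification of the primary obstruction with $\partial_{\op{Sp}*}(\alpha)$) applies in the required generality and that the relevant symplectic $\mathbb{A}^1$-homotopy sheaves behave well; this, together with $\op{char}k\neq 2$, is where the characteristic hypothesis is needed.
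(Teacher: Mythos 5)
Your proposal is essentially the same argument as the paper's, and it is correct. The paper's proof begins from the cartesian square of algebraic groups
\[
\xymatrix{
\op{Sp}_{p-1}\ar[r] \ar[d] & \op{Sp}_{p+1} \ar[d] \\
\op{SL}_{p}\ar[r] & \op{SL}_{p+1}
}
\]
quoted from Asok--Fasel (\cite[Section 4.2]{AsokFaselSplitting}), which immediately gives the commutative diagram of $\mathbb{A}^1$-fibre sequences with common fibre $\mathbb{A}^{p+1}\setminus\{0\}$; it then composes Mohan Kumar's unimodular-row map with the fibre inclusion $\mathbb{A}^{p+1}\setminus\{0\}\to{\op{B}}\op{Sp}_{p-1}$, reads off stable triviality from the top row and non-triviality by pushing the lifting class through $\op{H}^p(Y\cap Z,\bm{\pi}^{\mathbb{A}^1}_p{\op{B}}\op{Sp}_{p-1})\to\op{H}^p(Y\cap Z,\bm{\pi}^{\mathbb{A}^1}_p{\op{B}}\op{SL}_{p})$ and invoking the already-established non-triviality in the $\op{SL}_p$ case. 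Your version reconstructs the same diagram by hand (introducing $W=\op{Sp}_{p+1}/\op{Sp}_{p-1}$, identifying it with $\mathbb{A}^{p+1}\setminus\{0\}$ via the affine-bundle projection, and producing the $\op{Sp}_{p-1}$-torsor by lifting $v$ to a hyperbolic pair $(v,w)$), which is a legitimate and somewhat more self-contained way to obtain what the citation gives for free, and you also spell out the spreading-out step for the final clause; the substance of the exact-sequence chase transferring non-triviality from the vector-bundle case is identical. The one place to be precise, as you note yourself, is that the right vertical map $W\to\op{SL}_{p+1}/\op{SL}_p$ must be seen to be an $\mathbb{A}^1$-weak equivalence compatible with the boundary maps on $\bm{\pi}^{\mathbb{A}^1}_p$; the cartesian-square citation in the paper is exactly what makes this automatic, so either supply that reference or carry out the affine-bundle comparison you sketch.
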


Taking the underlying projective module and adding a trivial line recovers the stably free modules of Theorem~\ref{thm:mk1}, i.e., Mohan Kumar's stably free modules have (stably trivial) symplectic lifts.

Finally, the original motivation for looking at Mohan Kumar's example through the eyes of $\mathbb{A}^1$-homotopy theory comes from the case $p=3$. There is also a Milnor K-cohomology group relevant for the classification of octonion algebras with hyperbolic norm form. Applying the Zorn construction to the examples of Mohan Kumar provides examples of such algebras over schemes of minimum possible dimension, cf. the discussion in Section~\ref{sec:rest} and \cite{octonion}. 

\subsection{Acknowledgements} 
This note was written during a pleasant stay at Institut Mittag-Leffler, in the program ``Algebro-geometric and homotopical methods'', and I thank the institute for its hospitality. I would like to thank Chuck Weibel for comments on an earlier version, Jean Fasel for pointing out some mistakes in the arguments for triviality of connecting morphisms and Aravind Asok for very helpful suggestions related to obstruction theory issues. Comments of two anonymous referees helped to improve the presentation; I am particularly grateful for comments requesting to include full details for the computation of the cohomology operations in Section~\ref{sec:a1homotopy} which I hope led to clarification of the proof and exhibited a minor 2-torsion mistake in the previous version.

\section{Recollection on \texorpdfstring{$\mathbb{A}^1$}{A1}-homotopy and representability} 
\label{sec:prelims}

In this section, we give a short recollection on the relevant input we use from $\mathbb{A}^1$-homotopy theory. 

We assume the reader is familiar with the basic definitions of $\mathbb{A}^1$-homotopy theory, cf.~\cite{MField}. Short introductions to those aspects relevant for the obstruction-theoretic torsor classification can be found in papers of Asok and Fasel, cf. e.g. \cite{AsokFasel,AsokFaselSplitting}. The notation in the paper generally follows the one from \cite{AsokFasel}. We generally assume that we are working over base fields of characteristic $\neq 2$. 

When considering $\mathbb{A}^1$-homotopy sheaves, only the simplicial grading will appear, i.e., for a pointed $\mathbb{A}^1$-connected space $(\mathscr{X},x)$, we will denote by $\bm{\pi}^{\mathbb{A}^1}_n\mathscr{X}$ the Nisnevich sheafification of the presheaf $U\mapsto [\op{S}^n_{\op{s}}\wedge U_+,(\mathscr{X},x)]$. 

\subsection{Representability theorem and obstruction theory}
\label{sec:representable}

Now we recall the representability theorem for torsors and introduce some notation for Postnikov towers which are used for the obstruction-theoretic approach to torsor classification in $\mathbb{A}^1$-homotopy.

The following representability theorem, significantly generalizing an earlier result of Morel in \cite{MField}, cf. also \cite[Theorem 6.22]{schlichting}, has been proved in \cite{gbundles,gbundles2}. 

\begin{theorem}
\label{thm:representability}
Let $k$ be an infinite field, and let $X=\op{Spec} A$ be a smooth affine $k$-scheme. Let $G$ be a reductive group such that each absolutely almost simple component of $G$ is isotropic. Then there is a bijection 
\[
\op{H}^1_{\op{Nis}}(X;G)\cong [X, {\op{B}}_{\op{Nis}}G]_{\mathbb{A}^1}
\]
between the pointed set of isomorphism classes of rationally trivial $G$-torsors over $X$ and the pointed set of $\mathbb{A}^1$-homotopy classes of maps $X\to {\op{B}}_{\op{Nis}}G$.
\end{theorem}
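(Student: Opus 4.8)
The plan is to deduce the bijection from two essentially independent ingredients: an \textbf{affine $\mathbb{A}^1$-invariance} statement for the functor of rationally trivial $G$-torsors, and a general \textbf{affine representability} recognition principle that upgrades such an invariance statement, together with Nisnevich descent, into a computation of $\mathbb{A}^1$-homotopy classes of maps into the Nisnevich classifying space. Concretely, I would let $\mathscr{F}$ be the simplicial presheaf $U\mapsto \op{B}_{\op{Nis}}G(U)$, whose set of connected components on a smooth affine $U$ is exactly $\op{H}^1_{\op{Nis}}(U;G)$, and aim to show that the canonical map $\mathscr{F}\to \op{L}_{\mathbb{A}^1}\mathscr{F}$ induces a bijection on $[X,-]$ for every smooth affine $X=\op{Spec}A$.

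For the first ingredient I would prove that $U\mapsto \op{H}^1_{\op{Nis}}(U;G)$ is $\mathbb{A}^1$-invariant on smooth affine $k$-schemes, i.e.\ that pullback along $\mathbb{A}^1_U\to U$ is a bijection onto the rationally trivial classes. The key case is $U=\op{Spec}R$ for $R$ a regular local ring containing $k$: one must show that a rationally trivial $G$-torsor over $\mathbb{A}^1_R$ is extended from $R$. This is where the \emph{isotropy hypothesis is indispensable} — it is an instance of equal-characteristic, isotropic Grothendieck--Serre--type results, building on Colliot-Th\'el\`ene--Ojanguren and its extensions; for anisotropic groups the statement is simply false. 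From the local statement one passes to a general smooth affine $U$ by a Lindel/Popescu-type geometric presentation argument, and it is here that the hypothesis that $k$ be infinite is used, to produce the required nice local models (a Gabber-type presentation lemma).

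For the second ingredient I would invoke the affine representability machinery: a simplicial presheaf that (a) satisfies Nisnevich excision on affines (the ``affine BG-property''), and (b) whose homotopy sheaves — here really just $\pi_0=\op{H}^1_{\op{Nis}}(-;G)$, since $\op{B}_{\op{Nis}}G$ is already Nisnevich-local — are $\mathbb{A}^1$-invariant on affines, is ``$\mathbb{A}^1$-naive on affines'': the map $\op{Sing}^{\mathbb{A}^1}\mathscr{F}\to \op{L}_{\mathbb{A}^1}\mathscr{F}$ is a Nisnevich-local equivalence on smooth affines, so that $[X,\op{B}_{\op{Nis}}G]_{\mathbb{A}^1}\cong \pi_0(\op{Sing}^{\mathbb{A}^1}\mathscr{F}(X))$. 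Then $\mathbb{A}^1$-invariance of $\op{H}^1_{\op{Nis}}(-;G)$ collapses $\op{Sing}^{\mathbb{A}^1}$ on $\pi_0$ and yields $\pi_0(\op{Sing}^{\mathbb{A}^1}\mathscr{F}(X))\cong \op{H}^1_{\op{Nis}}(X;G)$. Checking the affine BG-property amounts to the statement that a $G$-torsor glues along a Nisnevich distinguished square, which is standard descent; the subtle point is that one only controls things on \emph{affine} schemes, so all homotopy-colimit arguments must be carried out inside that subcategory, which is precisely the content of the affine representability framework.

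The main obstacle is clearly the first ingredient, the affine $\mathbb{A}^1$-invariance of $\op{H}^1_{\op{Nis}}(-;G)$ for isotropic reductive $G$: this is a hard result in the circle of ideas around the Grothendieck--Serre conjecture, genuinely uses isotropy, and requires the passage from regular local rings to smooth affine schemes via geometric presentation lemmas (hence the infinite base field). Given that input, the remaining steps are formal manipulations within the established affine representability formalism. I should note that in the present note this theorem is quoted rather than reproved, so the above is a reconstruction of the argument of the cited works rather than a genuinely new proof.
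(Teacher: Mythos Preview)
Your proposal is correct and, as you yourself note, the paper does not prove this theorem but simply cites \cite{gbundles,gbundles2}; your sketch accurately reconstructs the strategy of those references (affine $\mathbb{A}^1$-invariance of $\op{H}^1_{\op{Nis}}(-;G)$ for isotropic reductive $G$ via Grothendieck--Serre-type input, combined with the affine representability/affine Nisnevich excision machinery to identify naive and $\mathbb{A}^1$-homotopy classes on smooth affines). There is nothing to compare against here beyond the citation, and your identification of where the isotropy and infinite-field hypotheses enter is on target.
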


Here, ${\op{B}}_{\op{Nis}}G$ refers to the classifying space of rationally trivial $G$-torsors for an algebraic group $G$ which can be constructed by the usual simplicial bar construction. Classical results on the Grothendieck--Serre conjecture state that for isotropic reductive groups $G$ rationally trivial torsors are automatically Zariski-locally trivial. In the case when the group $G$ is special, i.e., when all torsors are Zariski-locally trivial, the index $\op{Nis}$ will be omitted and the classifying space will simply be denoted by ${\op{B}}G$. The groups that will be discussed will be $G=\op{SL}_n$ (for most part of the paper) as well as the symplectic groups $G=\op{Sp}_{2n}$, the spin groups $\op{Spin}(n)$ and exceptional group $\op{G}_2$ (in Section~\ref{sec:rest}). 

The representability theorem translates questions about $G$-torsor classification into questions about $\mathbb{A}^1$-homotopy classes of maps into the classifying space. In particular, we can prove that a torsor is non-trivial by exhibiting some $\mathbb{A}^1$-topological invariant showing that its classifying map is not null-homotopic. On the other hand, we can deduce the existence of torsors over smooth affine schemes with suitable properties by producing maps $X\to{\op{B}}_{\op{Nis}}G$, e.g. by obstruction-theoretic methods.

While the study of $\mathbb{A}^1$-homotopy classes maps into classifying spaces may not seem an easier subject than the torsor classification, the other relevant tool actually allowing to prove some meaningful statements (by analyzing maps into the classifying space) is obstruction theory. The basic statements concerning obstruction theory as applied to torsor classification can be found in various sources, such as \cite{MField} or \cite{AsokFasel,AsokFaselSplitting}. We only give a short list of the relevant statements which are enough for our purposes. 

The point of obstruction theory is to use the Postnikov tower to analyze pointed maps $[X,Y]_{\mathbb{A}^1,\bullet}$. Note that the representability theorem above related the isomorphism classes of $G$-torsors to \emph{unpointed} (or free) maps $[X,{\op{B}}_{\op{Nis}}G]_{\mathbb{A}^1}$. For the groups we consider in this paper, the classifying spaces ${\op{B}}_{\op{Nis}}G$ are $\mathbb{A}^1$-simply connected, hence the natural map from pointed to unpointed homotopy classes of maps is a bijection and the obstruction theory can be directly used to analyze $[X,{\op{B}}_{\op{Nis}}G]_{\mathbb{A}^1}$.

Let $(\mathscr{Y},y)$ be a pointed, 
 $\mathbb{A}^1$-simply connected space. Then there is a sequence of pointed 
$\mathbb{A}^1$-simply connected spaces, the Postnikov sections $(\tau_{\leq i}\mathscr{Y},y)$, with morphisms $p_i\colon\mathscr{Y}\to\tau_{\leq i}\mathscr{Y}$ and morphisms $f_i\colon\tau_{\leq i+1}\mathscr{Y}\to\tau_{\leq i}\mathscr{Y}$ such that
\begin{enumerate}
\item $\bm{\pi}_j^{\mathbb{A}^1}(\tau_{\leq i}\mathscr{Y})=0$ for $j>i$,
\item the morphism $p_i$ induces an isomorphism on $\mathbb{A}^1$-homotopy sheaves in degrees $\leq i$,
\item the morphism $f_i$ is an $\mathbb{A}^1$-fibration, and the $\mathbb{A}^1$-homotopy fiber of $f_i$ is an Eilenberg--Mac~Lane space of the form $\op{K}(\bm{\pi}_{i+1}^{\mathbb{A}^1}(\mathscr{Y}),i+1)$,
\item the induced morphism $\mathscr{Y}\to\tau_{\leq i}\op{holim}_i\mathscr{Y}$ is an $\mathbb{A}^1$-weak equivalence. 
\end{enumerate}
Moreover, $f_i$ is a principal $\mathbb{A}^1$-fibration, i.e., there is a morphism, unique up to $\mathbb{A}^1$-homotopy, 
$$
k_{i+1}\colon\tau_{\leq i}\mathscr{Y} \to\op{K}(\bm{\pi}_{i+1}^{\mathbb{A}^1}(\mathscr{Y}),i+2)
$$
called the $i+1$-th \emph{$k$-invariant} and an $\mathbb{A}^1$-fiber sequence
$$
\tau_{\leq i+1}\mathscr{Y} \to \tau_{\leq i}\mathscr{Y} \xrightarrow{k_{i+1}}\op{K}(\bm{\pi}_{i+1}^{\mathbb{A}^1}(\mathscr{Y}),i+2). 
$$ 

From these statements, one gets the following consequence: for a smooth $k$-scheme $X$ and a pointed, 
 $\mathbb{A}^1$-simply connected space  $\mathscr{Y}$, a given pointed map $g^{(i)}\colon X_+\to\tau_{\leq i}\mathscr{Y}$ lifts to a map $g^{(i+1)}\colon X_+\to\tau_{\leq i+1}\mathscr{Y}$ if and only if the following composite is null-homotopic:
$$
X_+\xrightarrow{g^{(i)}}\tau_{\leq i}\mathscr{Y}\to\op{K}(\bm{\pi}_{i+1}^{\mathbb{A}^1}(\mathscr{Y}),i+2), 
$$
or equivalently, if the corresponding obstruction class vanishes in the cohomology group $\op{H}^{i+2}_{\op{Nis}}(X;\bm{\pi}_{i+1}^{\mathbb{A}^1}(\mathscr{Y}))$. If this happens, then the possible lifts are parametrized by the quotient of the following set of homotopy classes of maps 
$$
[X_+,\op{K}(\bm{\pi}_{i+1}^{\mathbb{A}^1}(\mathscr{Y}),i+1)]_{\mathbb{A}^1}\cong \op{H}^{i+1}_{\op{Nis}}(X;\bm{\pi}_{i+1}^{\mathbb{A}^1}(\mathscr{Y}))
$$
modulo the standard action of $[X_+,\Omega\tau_{\leq i}\mathscr{Y}]_{\mathbb{A}^1}$. This quotient (or sometimes the cohomology group $\op{H}^{i+1}_{\op{Nis}}(X;\bm{\pi}_{i+1}^{\mathbb{A}^1}(\mathscr{Y}))$) will usually be called the \emph{lifting set}. Again, as noted above, since $\mathscr{Y}$ is $\mathbb{A}^1$-simply connected, there is no difference between considering pointed maps $X_+\to\tau_{\leq i}\mathscr{Y}$ or unpointed maps $X\to\tau_{\leq i}\mathscr{Y}$.

We want to state clearly what this means for the torsor classification over smooth schemes. If we have a torsor, then the map into the classifying space ${\op{B}}_{\op{Nis}}G$ associated by the representability theorem~\ref{thm:representability} is completely described by a sequence of classes in the lifting sets $\op{H}^{i+1}_{\op{Nis}}(X;\bm{\pi}_{i+1}^{\mathbb{A}^1}({\op{B}}_{\op{Nis}}G))$, well-defined up to the respective action of $[X_+,\Omega\tau_{\leq i}{\op{B}}_{\op{Nis}}G]_{\mathbb{A}^1}$. Only indices $0\leq i+1\leq n$ can appear for schemes of dimension $n$ since the Nisnevich cohomological dimension equals the Krull dimension. Conversely, to construct a torsor, one needs a sequence of lifting classes as above, such that the associated obstruction classes in the groups $\op{H}^{i+2}_{\op{Nis}}(X;\bm{\pi}_{i+1}^{\mathbb{A}^1}({\op{B}}_{\op{Nis}}G))$ vanish. Put bluntly, $\mathbb{A}^1$-obstruction theory translates questions about $\mathbb{A}^1$-homotopy classes of maps (from smooth schemes) into cohomological terms.

\subsection{Cohomology of strictly $\mathbb{A}^1$-invariant sheaves}
\label{sec:rscomplex}

We now provide a short recollection on strictly $\mathbb{A}^1$-invariant sheaves and their Nisnevich cohomology. All of the material here can be found in \cite{MField}. Recall that a sheaf $\mathbf{A}$ of abelian groups on $\op{Sm}/k$ is called \emph{strictly $\mathbb{A}^1$-invariant} if for each smooth $k$-scheme $X$ the map  $\op{pr}_1^\ast\colon\op{H}^i_{\op{Nis}}(X,\mathbf{A})\to\op{H}^i_{\op{Nis}}(X\times\mathbb{A}^1,\mathbf{A})$ is an isomorphism. 

As a matter of notation, strictly $\mathbb{A}^1$-invariant sheaves will usually be denoted by bold letters, following notational conventions from \cite{AsokFasel}. Most of the sheaves we will consider in this paper will be K-theory sheaves of various sorts where the superscript will usually indicate which type of K-theory is referred to: $\mathbf{K}^{\op{Q}}_i$ is (the Nisnevich sheafification of) Quillen's algebraic K-theory, $\mathbf{K}^{\op{M}}_i$ denotes Milnor K-theory. Morel's Milnor--Witt K-theory sheaves will be denoted by $\mathbf{K}^{\op{MW}}_i$, the definition can be found in \cite[Section 2]{MField}. As a special case, $\mathbf{K}^{\op{MW}}_0$ is the sheaf of Grothendieck--Witt rings, which is also denoted by $\mathbf{GW}$. For the discussion in Section~\ref{sec:mk2}, we'll also need the sheaves $\mathbf{I}^n$ which are the Nisnevich sheafifications of the presheaves of $n$-th powers of the fundamental ideal in the Witt ring. 

If $k$ is a perfect field, $K/k$ is any extension and $\mathscr{X}$ is an $\mathbb{A}^1$-simply-connected simplicial presheaf over $\op{Sm}/K$, then its $\mathbb{A}^1$-homotopy sheaves $\bm{\pi}^{\mathbb{A}^1}_n(\mathscr{X})$ are strictly $\mathbb{A}^1$-invariant sheaves of groups. In the special case where $K=k$ is an infinite perfect field, this is one of the main results of Morel's theory in \cite{MField}. A version of Gabber's presentation lemma over finite fields was established in \cite{hogadi:kulkarni}, and this allows to drop the requirement of infinity for the base field $k$. The general case which allows base change along an extension $K/k$ to a possibly non-perfect field follows from \cite[Lemma A.2, A.4]{hoyois}. This implies, in particular, that the $\mathbb{A}^1$-homotopy sheaves of ${\op{B}}\op{SL}_n$ over rational function fields $k(T)$ of positive characteristic are strictly $\mathbb{A}^1$-invariant because ${\op{B}}\op{SL}_n$ is defined over the (perfect) prime field. 

If $\mathbf{A}$ is a strictly $\mathbb{A}^1$-invariant sheaf of abelian groups on $\op{Sm}/k$, then for any smooth $k$-scheme $U$ the unit $1\in\mathbb{G}_{\op{m}}$ defines a morphism $1\times\op{id}\colon U\to\mathbb{G}_{\op{m}}\times U$, and the \emph{contraction} of $\mathbf{A}$ is defined to be the sheaf
\[
\mathbf{A}_{-1}(U)=\ker\left(\mathbf{A}(\mathbb{G}_{\op{m}}\times U)\xrightarrow{(1\times\op{id})^\ast}\mathbf{A}(U)\right). 
\]
This construction can be iterated to yield $\mathbf{A}_{-n}$ for $n\in\mathbb{N}$; it is an exact functor on the category of strictly $\mathbb{A}^1$-invariant sheaves on $\op{Sm}/k$, cf. \cite[Lemmas 2.32 and 7.33]{MField}. 

If $\mathbf{A}$ is a strictly $\mathbb{A}^1$-invariant sheaf of abelian groups, there is a natural $\mathbb{G}_{\op{m}}$-action on the contractions of $\mathbf{A}$, defined as follows. For a unit $u\in\mathcal{O}_X(X)^\times$, pullback along the morphism $\mathbb{G}_{\op{m}}\times X\to \mathbb{G}_{\op{m}}\times X$ given by multiplication with $u$ induces an action of $\mathcal{O}_X(X)^\times$ on $\mathbf{A}(\mathbb{G}_{\op{m}}\times X)$. By means of the short exact sequence
\[
0\to \mathbf{A}(X)\xrightarrow{\op{pr}_1^\ast} \mathbf{A}(\mathbb{G}_{\op{m}}\times X)\to \mathbf{A}_{-1}(X)\to 0,
\]
this induces a $\mathbb{G}_{\op{m}}$-action on $\mathbf{A}_{-1}(X)$, noting that the map $\op{pr}_1^\ast$ is $\mathbb{G}_{\op{m}}$-equivariant if we equip $\mathbf{A}(X)$ with the trivial action. This action of the unit group extends to an action of the Grothendieck--Witt ring along the morphism 
\[
\mathbb{G}_{\op{m}}\to\mathbf{K}^{\op{MW}}_0\colon u\mapsto\langle u\rangle=\eta[u]+1
\]
by \cite[Lemma 3.49]{MField}; it can be defined on higher contractions $\mathbf{A}_{-n}$, $n\geq 2$, and is then independent of the choice of which factor of $\mathbb{G}_{\op{m}}^n$ is acting, cf. the discussion after Lemma 3.49 in \cite{MField}.

The unit action above can be used to twist the contractions $\mathbf{A}_{-n}$, $n\geq 1$, of a strictly $\mathbb{A}^1$-invariant sheaf of abelian groups, cf. \cite[p. 118]{MField}. For a field $E$ and an $E$-vector space $V$ of dimension $1$, we denote by $V^\times$ the set $V\setminus\{0\}$ equipped with the induced scalar multiplication of $E^\times$, and define for $n\geq 1$
\[
\mathbf{A}_{-n}(E;V):=\mathbf{A}_{-n}(E)\otimes_{\mathbb{Z}[E^\times]} \mathbb{Z}[V^\times]. 
\]
This is mostly used in the case where $X$ is an essentially smooth $k$-scheme, $E=\kappa(x)$ is the residue field of a point $x\in X^{(n)}$ of codimension $n$ and $\Lambda^X_x:=\wedge^n_{\kappa(x)}(\mathfrak{m}_x/\mathfrak{m}_x^2)^\vee$ the stalk of the conormal sheaf of $x$ in $X$.

Next, residue morphisms for a strictly $\mathbb{A}^1$-invariant sheaf $\mathbf{A}$ are defined as follows, cf. \cite[Corollary 2.35, Lemma 5.10]{MField}. For a field $F$ with discrete valuation $v$ and residue field $E$, we have $\Lambda^{\mathcal{O}_v}_E=\mathfrak{m}_v/\mathfrak{m}_v^2$. Then we choose a uniformizer $\pi$ for $v$. This choice determines a Nisnevich distinguished square 
\[
\xymatrix{
\op{Spec} F\ar[r] \ar[d] & \op{Spec} \mathcal{O}_v \ar[d] \\
\mathbb{G}_{\op{m},E} \ar[r] & \mathbb{A}^1_E
}
\]
by means of which we can identify $\mathbf{A}_{-1}(E)\cong \mathbf{A}(F)/\mathbf{A}(\mathcal{O}_v)$. For a choice of uniformizer $\pi$, the residue map 
\[
\partial^F_E\colon\mathbf{A}(F)\to \mathbf{A}_{-1}(E;\Lambda^{\mathcal{O}_v}_E)=\mathbf{A}_{-1}(E) \otimes_{\mathbb{Z}[E^\times]} \mathbb{Z}[(\Lambda^{\mathcal{O}_v}_E)^\times]
\]
now maps an element $s\in\mathbf{A}(F)$ to $\op{res}(s)\otimes \pi$ where $\op{res}(s)$ is the image of $s$ under the composition $\mathbf{A}(F)\to \mathbf{A}(F)/\mathbf{A}(\mathcal{O}_v)\to \mathbf{A}_{-1}(E)$ of the natural projection with the above isomorphism (which depended on the choice of uniformizer). It turns out, cf. \cite[Lemma 5.10]{MField}, that this definition of residue morphism is independent of the choice of uniformizer. 

If $\mathbf{A}$ is a strictly $\mathbb{A}^1$-invariant sheaf on $\op{Sm}/k$ and $X$ is an essentially smooth $k$-scheme, then there are Gersten-type complexes $\op{C}^\bullet(X;\mathbf{A})$ (called Rost--Schmid complexes in \cite{MField} and \cite{AsokFasel}) which provide flasque resolutions of $\mathbf{A}$ for the Zariski and Nisnevich topology, cf. \cite[Corollary 5.43]{MField}. These complexes have the form 
\[
\cdots \to \bigoplus_{y\in X^{(i)}}\mathbf{A}_{-i}(\kappa(y);\Lambda^X_y)\xrightarrow{\bigoplus\partial^y_z} \bigoplus_{z\in X^{(i+1)}}\mathbf{A}_{-i-1}(\kappa(z);\Lambda^X_z)\to \cdots
\]
The boundary map $\partial^y_z$ is only nontrivial if $z\in\overline{y}$; in this case, it is described as follows, cf. \cite[Section 5.1]{MField}: replace $X$ by the localization at $z$, and let $\tilde{Y}\to Y$ be the normalization of the curve $Y=\overline{y}\hookrightarrow X$. For every point $\tilde{z}_i$ on $\tilde{Y}$ lying over $z\in Y$, we have a residue morphism $\mathbf{A}_{-i}(\kappa(y))\to \bigoplus_{\tilde{z}_i}\mathbf{A}_{-i-1}(\kappa(\tilde{z}_i);\Lambda^{\tilde{Y}}_{\tilde{z}_i})$ as discussed above. We can twist these by the relative canonical bundle $\omega_{\tilde{Y}/X}$ to obtain the twisted residue morphisms 
\[
\mathbf{A}_{-i}(\kappa(y);\Lambda^X_y)\to \mathbf{A}_{-i-1}(\kappa(\tilde{z}_i);\omega_{\tilde{z}_i/X})
\]
noting that $\Lambda^{\tilde{Y}}_{\tilde{z}_i}\otimes \omega_{\tilde{Y}/X}=\omega_{\tilde{z}_i/X}$. The boundary map $\partial^y_z\colon \mathbf{A}_{-i}(\kappa(y);\Lambda^X_y)\to \mathbf{A}_{-i-1}(\kappa(z);\Lambda^X_z)$ is then obtained as the composition of the above twisted residue morphisms with the absolute transfer $\mathbf{A}_{-i-1}(\kappa(\tilde{z}_i);\omega_{\tilde{z}_i/X})\to \mathbf{A}_{-i-1}(\kappa(z);\Lambda^X_z)$. The absolute transfer maps (and the relevant geometric and cohomological transfers) are discussed in Sections 4 and 5.1 of \cite{MField}; we don't recall all the details here since for our computations, the absolute transfers will play no role.\footnote{For most of the computations, the boundary maps in the above complex will not matter; several vanishing results will only be proved by making statements about the structure of the reduction of certain strictly $\mathbb{A}^1$-invariant sheaves. However, there is one cohomology operation which we have to discuss in detail, and this requires a detailed tracing through the construction of the boundary map for the Gersten-type complexes.} If $\mathbf{A}$ is already a contraction of some other strictly $\mathbb{A}^1$-invariant sheaf and $X$ is a smooth scheme with a line bundle $\mathscr{L}$, then the above complexes can be further twisted by the line bundle, but we won't need this for our computations.


One of the noteworthy consequences of the existence of the above complexes computing Nisnevich cohomology is that Zariski and Nisnevich cohomology agree for all the sheaves we will consider in this paper. This fact might be used tacitly in some argument. There may also be occasionally missing indices, but all cohomology groups considered in this paper are Nisnevich cohomology groups.

\subsection{Mayer--Vietoris sequences}
\label{sec:mv}

Finally, we need to discuss Mayer--Vietoris sequences for the Nisnevich cohomology of strictly $\mathbb{A}^1$-invariant sheaves which will be needed for some computations in the paper. Let $X=U\cup V$ be a Zariski covering of smooth schemes over a perfect field $k$, and let $\mathbf{A}$ be a strictly $\mathbb{A}^1$-invariant Nisnevich sheaf of abelian groups on $\op{Sm}/k$. In this situation, it follows formally that there is an associated long exact Mayer--Vietoris sequence in Nisnevich cohomology
\[
\to\op{H}^i_{\op{Nis}}(X,\mathbf{A})\to \op{H}^i_{\op{Nis}}(U,\mathbf{A})\oplus \op{H}^i_{\op{Nis}}(V,\mathbf{A})\to \op{H}^i_{\op{Nis}}(U\cap V,\mathbf{A})\stackrel{\partial}{\longrightarrow} \op{H}^{i+1}_{\op{Nis}}(X,\mathbf{A})\to
\]
This sequence is functorial in $\mathbf{A}$.

One can also use the Gersten complexes to provide a specific model of the boundary map in the Mayer--Vietoris sequence as follows. There is an exact sequence 
\[
0\to \op{C}^\bullet(X,\mathbf{A})\to\op{C}^\bullet(U,\mathbf{A}) \oplus \op{C}^\bullet(V,\mathbf{A})\to \op{C}^\bullet(U\cap V,\mathbf{A})\to 0
\]
of Gersten complexes associated to a Zariski covering $X=U\cup V$
which gives rise to the long exact Mayer--Vietoris sequence. The boundary morphism can now be described as follows. A cycle representative of a class in $\op{H}^i_{\op{Nis}}(U\cap V,\mathbf{A})$ is given by a finite sum, indexed over codimension $i$ points $x$ of $U\cap V$, of elements of $\mathbf{A}_{-i}(k(x))$. This formal sum can be viewed as a formal sum in $\op{C}^\bullet(U;\mathbf{A})$ (which is the summand of the middle complex where the restriction map is $\op{id}$, not $-\op{id}$). This may no longer be a cycle, but  we can apply the boundary map of the Gersten complex (which is given in terms of residue maps) to this lifted chain. The result will have trivial image in $\op{C}^\bullet(U\cap V;\mathbf{A})$ and therefore the resulting formal sum can be viewed as an element of $\op{C}^\bullet(X;\mathbf{A})$. This will be a cycle representing the image of the boundary map $\op{H}^i_{\op{Nis}}(U\cap V,\mathbf{A})\to\op{H}^{i+1}_{\op{Nis}}(X,\mathbf{A})$.

\subsection{Quadrics, unimodular rows and cohomology}
\label{sec:quadrics}

We denote by $\op{Q}_d$ the $d$-dimensional smooth affine split quadric, cf.~\cite{AsokDoranFasel} for the defining equations over $\mathbb{Z}$. 

For the odd-dimensional quadrics $\op{Q}_{2n-1}$, defined by the equation $\sum_{i=1}^n X_iY_i=1$, it is classical that the projection onto the $X_i$-coordinates provides an $\mathbb{A}^1$-equivalence $\op{Q}_{2n-1}\to\mathbb{A}^n\setminus\{0\}$. Moreover, the odd-dimensional quadric $\op{Q}_{2n-1}$ is $\mathbb{A}^1$-local in the sense that the set $[X,\op{Q}_{2n-1}]_{\mathbb{A}^1}$ of $\mathbb{A}^1$-homotopy classes of maps into $\op{Q}_{2n-1}$ can be identified as the quotient of the set of scheme morphisms $X\to\op{Q}_{2n-1}$ modulo naive $\mathbb{A}^1$-homotopies $X\times \mathbb{A}^1\to \op{Q}_{2n-1}$, for every smooth affine scheme $X$; this is explained e.g. in \cite[\S 4]{AsokFaselSpheres}. 

For a commutative unital $k$-algebra $R$, a tuple $(a_1,\dots,a_n)$ of elements of $R$ is called \emph{unimodular row} if there exists a tuple $(b_1,\dots,b_n)$ of elements of $R$ such that $\sum a_ib_i=1$. The scheme $\mathbb{A}^n\setminus\{0\}$ classifies unimodular rows in the sense that for $X=\op{Spec} R$ a smooth affine $k$-scheme the set $[\op{Spec} R,\mathbb{A}^n\setminus\{0\}]_{\mathbb{A}^1}$ is in natural bijection with the orbit set of the natural action of the elementary group $\op{E}_n(R)$ on the set $\op{Um}_n(R)$ of unimodular rows over $R$. For the odd-dimensional quadric $\op{Q}_{2n-1}$, the scheme morphisms $X\to\op{Q}_{2n-1}$ are in bijection with pairs of a unimodular row $(a_1,\dots,a_n)$ and a choice of tuple $(b_1,\dots,b_n)$ with $\sum a_ib_i=1$. Any unimodular row $X\to\mathbb{A}^n\setminus\{0\}$ can be lifted to a morphism $X\to\op{Q}_{2n-1}$ and any two such lifts are equivalent up to $\mathbb{A}^1$-homotopy. For a further discussion of these issues, cf. again \cite[\S 4]{AsokFaselSpheres}. 

The relation between unimodular rows and $\mathbb{A}^n\setminus\{0\}$ resp. $\op{Q}_{2n-1}$ is relevant because of the following. A unimodular row $(a_1,\dots,a_n)$ of length $n$ over the ring $R$ gives rise to a stably free projective module of rank $n-1$, given as kernel of the map $R^n\to R\colon(b_1,\dots,b_n)\mapsto \sum a_ib_i$ defined by the unimodular row. On the level of $\mathbb{A}^1$-homotopy theory, this is reflected by the $\mathbb{A}^1$-fiber sequence
\[
\mathbb{A}^n\setminus\{0\}\to {\op{B}}\op{SL}_{n-1}\to {\op{B}}\op{SL}_n.
\]
Composing the morphism $X\to\mathbb{A}^n\setminus\{0\}$ corresponding to the unimodular row with the map $\mathbb{A}^n\setminus\{0\}\to{\op{B}}\op{SL}_{n-1}$ yields a rank $n-1$ vector bundle which becomes trivial after adding a trivial line bundle. This is what is relevant for the cohomological analysis of Mohan Kumar's stable free vector bundles: we will establish the non-triviality of the vector bundles by showing that the associated morphism $X\to \mathbb{A}^n\setminus\{0\}\to{\op{B}}\op{SL}_{n-1}$ is not null-homotopic. The obstruction-theoretic analysis reduces this to show that some cohomology classes of $X$ with coefficients in $\bm{\pi}^{\mathbb{A}^1}_i({\op{B}}\op{SL}_{n-1})$ are non-trivial. 

Since we will also need to compute the Nisnevich cohomology of the quadrics $\op{Q}_d$ with coefficients in strictly $\mathbb{A}^1$-invariant sheaves, we shortly recall the relevant formulas, cf. \cite{AsokDoranFasel}:
\[
\tilde{\op{H}}^i(\op{Q}_{2d},\mathbf{A})\cong\left\{\begin{array}{ll}
\mathbf{A}_{-d}(k) & i=d\\
0 & \textrm{else}
\end{array}\right.,\qquad
\tilde{\op{H}}^i(\op{Q}_{2d-1},\mathbf{A})\cong\left\{\begin{array}{ll} \mathbf{A}_{-d}(k) & i=d-1\\
0 & \textrm{else}
\end{array}\right.
\]
Here $k$ is the base field and $\mathbf{A}_{-d}$ denotes the $d$-fold contraction. 

\section{Cohomological analysis of Mohan Kumar's construction}
\label{sec:mkgeometry}

In this section, we recall the geometric constructions of \cite{mohan:kumar} and explain how they give rise to varieties with interesting cohomology classes. In fact, we will explain how Mohan Kumar's construction provides varieties $Y\cap Z$ where the following composition is a surjection
\[
[Y\cap Z,\mathbb{A}^{p+1}\setminus\{0\}]_{\mathbb{A}^1}\to \op{H}^p(Y\cap Z,\mathbf{K}^{\op{MW}}_{p+1})\to \op{H}^p (Y\cap Z,\mathbf{K}^{\op{M}}_{p+1}/p)\to \mathbb{Z}/p\mathbb{Z}.
\]

\subsection{Geometric setup} 
Fix a prime $p$ and a field $k$. The first geometric construction produces a smooth affine variety with non-trivial torsion in the top Chow group. For this, let $f(T)$ be a polynomial of degree $p$ over $k$ such that $f(0)=a\in k^\times$. Then there are recursively defined polynomials 
\begin{eqnarray*}
F_1(X_0,X_1)&=&X_1^pf\left(\frac{X_0}{X_1}\right), \textrm{ and}\\
F_{i+1}(X_0,\dots,X_{i+1})&=& F_1\left(F_i(X_0,\dots,X_i),a^{\frac{p^i-1}{p-1}}X_{i+1}^{p^i}\right).
\end{eqnarray*} 
If $f(T^{p^{m-1}})$ is irreducible then, according to \cite[Claim 1]{mohan:kumar}, $F_n$ is irreducible for $n\leq m$. In this case, \cite[Claim 2]{mohan:kumar} states that $X=\mathbb{P}^n\setminus \op{V}(F_n)$ is a smooth affine variety over $k$ with $\op{CH}^n(X)\cong\mathbb{Z}/p\mathbb{Z}$, generated by the class of a $k$-rational point of $X$. 

The second part of the geometric construction produces a Zariski covering  of $X$ by two affine subvarieties with trivial top Chow groups. The first subvariety is 
\[
Y=\left(\mathbb{P}^n\setminus\op{V}(F_{n-1})\right)\cap X,
\]
where we view $F_{n-1}$ in the obvious way as a polynomial in the variables $X_0,\dots,X_n$. Since $X\setminus Y$ contains the $k$-rational point $x=[0:0:\cdots:0:1]$, we have $\op{CH}^n(Y)=0$ by the localization sequence for Chow groups. The second subvariety is 
\[
Z=\left(\mathbb{P}^n\setminus\op{V}(G)\right)\cap X
\]
with the polynomial 
\[
G(X_0,\dots,X_n)=F_{n-1}(X_0,\dots,X_{n-1})-a^{\frac{p^{n-1}-1}{p-1}}X_n^{p^{n-1}}.
\]
Here the variety $\op{V}(G)$ contains the $k$-rational point $y=[0:0:\cdots:0:1:1]$, and again the localization sequence for Chow groups implies $\op{CH}^n(Z)=0$. We have a Zariski covering $X=Y\cup Z$ because $F_n\in\langle F_{n-1},G\rangle$.

Finally, the relevant variety is now the intersection $Y\cap Z$. 

\subsection{Non-trivial cohomology classes}

We first note that the variety $Y\cap Z$ constructed by Mohan Kumar supports a non-trivial cohomology class with coefficients in Milnor--Witt K-theory. This non-trivial class exists because the class in $\op{CH}^n(X)$ locally trivializes in the covering $X=Y\cup Z$. 

\begin{proposition}
\label{prop:mkclass}
Let $k$ be a field of $2$-cohomological dimension $\leq 1$. Let $p$ be a prime and assume that there exists a degree $p$ polynomial $f(T)$ over $k$ such that $f(T^{p^{p}})$ is irreducible. Consider the situation $X=Y\cup Z$ outlined above. Then there is a surjection
\[
\op{H}^p_{\op{Nis}}(Y\cap Z,\mathbf{K}^{\op{MW}}_{p+1})\twoheadrightarrow\op{CH}^{p+1}(X)\cong\mathbb{Z}/p\mathbb{Z}.
\]
\end{proposition}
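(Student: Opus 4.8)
\emph{Plan of proof.} The plan is to realize the claimed surjection as the Mayer--Vietoris boundary for $\mathbf{K}^{\op{MW}}_{p+1}$ attached to the Zariski covering $X=Y\cup Z$, followed by reduction to Chow groups. Note that $X$, $Y$ and $Z$ are all smooth affine of dimension $p+1$ (each being the complement of a hypersurface in $\mathbb{P}^{p+1}$), and that $\op{CH}^{p+1}(Y)=\op{CH}^{p+1}(Z)=0$ by the localization sequence recalled in the geometric setup. Throughout, set $d=p+1$, and recall that for a smooth $k$-scheme $W$ of dimension $d$ one has $\op{H}^{d}_{\op{Nis}}(W,\mathbf{K}^{\op{M}}_{d})=\op{CH}^{d}(W)$.

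The one non-formal ingredient is the vanishing $\op{H}^{d}_{\op{Nis}}(W,\mathbf{I}^{d+1})=0$ for $W\in\{X,Y,Z\}$, which I would establish using $\op{cd}_2(k)\le 1$ as follows. A residue field $\kappa(x)$ of a codimension-$j$ point of $W$ is finitely generated of transcendence degree $\le d-j$ over $k$, hence $\op{cd}_2(\kappa(x))\le d-j+1$, and the Arason--Pfister Hauptsatz together with the Milnor conjecture give $\mathbf{I}^m(\kappa(x))=0$ for $m>\op{cd}_2(\kappa(x))$; applied with $m=d+2-j$ this forces every term of the Rost--Schmid complex of $\mathbf{I}^{d+2}$ on $W$ to vanish, so $\op{H}^\ast_{\op{Nis}}(W,\mathbf{I}^{d+2})=0$. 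The short exact sequence $0\to\mathbf{I}^{d+2}\to\mathbf{I}^{d+1}\to\mathbf{K}^{\op{M}}_{d+1}/2\to 0$ then identifies $\op{H}^{d}_{\op{Nis}}(W,\mathbf{I}^{d+1})$ with $\op{H}^{d}_{\op{Nis}}(W,\mathbf{K}^{\op{M}}_{d+1}/2)$. Finally $\op{H}^{d}_{\op{Nis}}(W,\mathbf{K}^{\op{M}}_{d+1}/2)=0$ because $W$ is affine: identifying $\mathbf{K}^{\op{M}}_{d+1}/2$ with the unramified sheaf $\mathscr{H}^{d+1}$ of \'etale $\mathbb{Z}/2$-cohomology (Bloch--Kato, Bloch--Ogus--Gabber), its Gersten resolution gives $\op{H}^{s}_{\op{Nis}}(W,\mathscr{H}^{t})=0$ for $s>t$ and for $t\ge d+2$, so in the coniveau spectral sequence $\op{H}^{s}_{\op{Nis}}(W,\mathscr{H}^{t})\Rightarrow\op{H}^{s+t}_{\et}(W,\mathbb{Z}/2)$ the entry $E_2^{d,d+1}$ supports no nonzero differential and is a subquotient of $\op{H}^{2d+1}_{\et}(W,\mathbb{Z}/2)$; the latter vanishes since $\op{cd}_2(W)\le d+1<2d+1$ for $d\ge 1$, by Artin's affine vanishing theorem and Hochschild--Serre.

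Granting this, the rest is diagram chasing. From $0\to\mathbf{I}^{p+2}\to\mathbf{K}^{\op{MW}}_{p+1}\to\mathbf{K}^{\op{M}}_{p+1}\to 0$ (reduction modulo $\eta$), applied to $W=Y,Z$ and combined with $\op{CH}^{p+1}(Y)=\op{CH}^{p+1}(Z)=0$ and the vanishing above, one gets $\op{H}^{p+1}_{\op{Nis}}(Y,\mathbf{K}^{\op{MW}}_{p+1})=\op{H}^{p+1}_{\op{Nis}}(Z,\mathbf{K}^{\op{MW}}_{p+1})=0$; applied to $W=X$, and using $\op{H}^{p+2}_{\op{Nis}}(X,\mathbf{I}^{p+2})=0$ for dimension reasons, it shows that $\op{H}^{p+1}_{\op{Nis}}(X,\mathbf{K}^{\op{MW}}_{p+1})\to\op{H}^{p+1}_{\op{Nis}}(X,\mathbf{K}^{\op{M}}_{p+1})=\op{CH}^{p+1}(X)$ is onto. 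The Mayer--Vietoris sequence for $\mathbf{K}^{\op{MW}}_{p+1}$ attached to $X=Y\cup Z$ then shows that the boundary
\[
\op{H}^{p}_{\op{Nis}}(Y\cap Z,\mathbf{K}^{\op{MW}}_{p+1})\longrightarrow \op{H}^{p+1}_{\op{Nis}}(X,\mathbf{K}^{\op{MW}}_{p+1})
\]
is surjective, since the next term $\op{H}^{p+1}_{\op{Nis}}(Y,\mathbf{K}^{\op{MW}}_{p+1})\oplus\op{H}^{p+1}_{\op{Nis}}(Z,\mathbf{K}^{\op{MW}}_{p+1})$ vanishes; composing with the surjection onto $\op{CH}^{p+1}(X)\cong\mathbb{Z}/p\mathbb{Z}$ produces the asserted map. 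By functoriality of Mayer--Vietoris in the coefficients, this composite agrees with the reduction $\op{H}^{p}_{\op{Nis}}(Y\cap Z,\mathbf{K}^{\op{MW}}_{p+1})\to\op{H}^{p}_{\op{Nis}}(Y\cap Z,\mathbf{K}^{\op{M}}_{p+1})$ followed by the $\mathbf{K}^{\op{M}}_{p+1}$-boundary $\op{H}^{p}_{\op{Nis}}(Y\cap Z,\mathbf{K}^{\op{M}}_{p+1})\twoheadrightarrow\op{CH}^{p+1}(X)$, which is the form that enters the later arguments.

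The main obstacle is the vanishing $\op{H}^{d}_{\op{Nis}}(W,\mathbf{I}^{d+1})=0$: it is the only place where both hypotheses are genuinely used --- affineness of $W$ (via Artin vanishing) and $\op{cd}_2(k)\le 1$ (to kill the Witt-theoretic residues at the generic and codimension-one points) --- while everything else reduces to the standard exact sequences relating $\mathbf{K}^{\op{MW}}_\bullet$, $\mathbf{K}^{\op{M}}_\bullet$ and the powers $\mathbf{I}^n$. For odd $p$ this step can be bypassed: there the obstruction group $\op{H}^{p+1}_{\op{Nis}}(Y\cap Z,\mathbf{I}^{p+2})$ is $2$-primary torsion whereas $\mathbb{Z}/p\mathbb{Z}$ has odd order, so a $\mathbf{K}^{\op{M}}_{p+1}$-class mapping to a generator can be lifted to $\mathbf{K}^{\op{MW}}_{p+1}$ after multiplication by a suitable power of $2$; since that shortcut fails for $p=2$, I would prefer to run the uniform argument above.
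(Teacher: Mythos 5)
Your proof is correct, and the shape of the argument is recognizably the same as the paper's (Mayer--Vietoris plus an $\mathbf{I}^{n}$-vanishing via $\op{cd}_2(k)\le 1$), but you deviate on two points worth recording. First, the diagram chase is organized differently: the paper runs the Mayer--Vietoris sequence for $\mathbf{K}^{\op{M}}_{p+1}$ to get $\op{H}^p(Y\cap Z,\mathbf{K}^{\op{M}}_{p+1})\twoheadrightarrow\op{CH}^{p+1}(X)$ directly, and then lifts the source to $\mathbf{K}^{\op{MW}}_{p+1}$-cohomology by showing that $\op{H}^{p+1}_{\op{Nis}}(Y\cap Z,\mathbf{I}^{p+2})=0$; you instead kill $\op{H}^{p+1}_{\op{Nis}}(W,\mathbf{K}^{\op{MW}}_{p+1})$ for $W=Y,Z$, run Mayer--Vietoris already with $\mathbf{K}^{\op{MW}}_{p+1}$-coefficients, and finish by projecting $\op{H}^{p+1}(X,\mathbf{K}^{\op{MW}}_{p+1})$ onto $\op{CH}^{p+1}(X)$. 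The two chases are of equal strength here (the needed $\mathbf{I}^{p+2}$-vanishing holds for $Y$, $Z$ and $Y\cap Z$ alike, since all are affine hypersurface complements in $\mathbb{P}^{p+1}$), and the paper's version is a bit shorter since it never needs to produce a surjection onto $\op{H}^{p+1}(X,\mathbf{K}^{\op{MW}}_{p+1})$ itself. Second, the paper outsources the crucial vanishing $\op{H}^{p+1}_{\op{Nis}}(\cdot,\mathbf{I}^{p+2})=0$ to a citation of Asok--Fasel's Duke paper (their Proposition 5.2), whereas you re-prove it from scratch via the Rost--Schmid complex for $\mathbf{I}^{p+3}$, the Milnor-conjecture exact sequence $0\to\mathbf{I}^{p+3}\to\mathbf{I}^{p+2}\to\mathbf{K}^{\op{M}}_{p+2}/2\to 0$, the coniveau spectral sequence, and Artin's affine vanishing; this is essentially a reconstruction of the cited result, valid and instructive, but substantially longer. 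Your closing remark that for odd $p$ one can bypass the vanishing because $\op{H}^{p+1}(\cdot,\mathbf{I}^{p+2})$ is $2$-primary while the target is $p$-torsion is a clean observation and correct (since $\op{cd}_2(k)\le 1$ forces $k$ non-formally-real, so Witt groups of all residue fields are $2$-primary torsion); the paper does not remark on this but it explains why the later sections can work comfortably modulo $p$ rather than $p!$.
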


\begin{proof}
Use the Mayer--Vietoris sequence for the cohomology of $\mathbf{K}^{\op{M}}_{p+1}$  associated to the Zariski covering $X=Y\cup Z$, whose relevant portion is the following
\[
\op{H}^p_{\op{Nis}}(Y\cap Z,\mathbf{K}^{\op{M}}_{p+1})\to\op{CH}^{p+1}(X)\to \op{CH}^{p+1}(Y)\oplus\op{CH}^{p+1}(Z).
\]
By construction the last group of the sequence is trivial, showing that the boundary map in the Mayer--Vietoris sequence is a surjection. Now we consider the  exact sequence of strictly $\mathbb{A}^1$-invariant sheaves $0\to\mathbf{I}^{p+2}\to\mathbf{K}^{\op{MW}}_{p+1}\to\mathbf{K}^{\op{M}}_{p+1}\to 0$. The induced morphism $\op{H}^p_{\op{Nis}}(Y\cap Z,\mathbf{K}^{\op{MW}}_{p+1})\to \op{H}^p_{\op{Nis}}(Y\cap Z,\mathbf{K}^{\op{M}}_{p+1})$ is surjective if we can show  $\op{H}^{p+1}_{\op{Nis}}(Y\cap Z,\mathbf{I}^{p+2})=0$. But that follows from \cite[Proposition 5.2]{AsokFasel} because by assumption the $2$-cohomological dimension of the base field $k$ is $\leq 1$.
\end{proof}

\begin{corollary}
Let $k$ be a field of $2$-cohomological dimension $\leq 1$. Let $p$ be a prime and assume that there exists a degree $p$ polynomial $f(T)$ over $k$ such that $f(T^{p^{p}})$ is irreducible. Consider the situation $X=Y\cup Z$ outlined above. Then there is a surjection
\[
[Y\cap Z,\op{Q}_{2p+1}]_{\mathbb{A}^1}\to \op{H}^p_{\op{Nis}}(Y\cap Z,\mathbf{K}^{\op{MW}}_{p+1})\to\op{CH}^{p+1}(X)\cong\mathbb{Z}/p\mathbb{Z}.
\]
\end{corollary}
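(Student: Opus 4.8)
The corollary follows by precomposing the surjection of Proposition~\ref{prop:mkclass} with a map that sends $[Y\cap Z,\op{Q}_{2p+1}]_{\mathbb{A}^1}$ onto $\op{H}^p_{\op{Nis}}(Y\cap Z,\mathbf{K}^{\op{MW}}_{p+1})$, so the plan is to identify such a map and check its surjectivity. The natural candidate is the first nontrivial stage of the Postnikov tower for $\op{Q}_{2p+1}\simeq_{\mathbb{A}^1}\mathbb{A}^{p+1}\setminus\{0\}$: since $\mathbb{A}^{p+1}\setminus\{0\}$ is $\mathbb{A}^1$-$(p-1)$-connected with $\bm{\pi}^{\mathbb{A}^1}_p(\mathbb{A}^{p+1}\setminus\{0\})\cong\mathbf{K}^{\op{MW}}_{p+1}$ (recorded in the introduction), the truncation map $\mathbb{A}^{p+1}\setminus\{0\}\to\tau_{\leq p}(\mathbb{A}^{p+1}\setminus\{0\})=\op{K}(\mathbf{K}^{\op{MW}}_{p+1},p)$ induces for any smooth affine $X$ a map $[X,\mathbb{A}^{p+1}\setminus\{0\}]_{\mathbb{A}^1}\to\op{H}^p_{\op{Nis}}(X,\mathbf{K}^{\op{MW}}_{p+1})$, exactly the first map appearing in the introduction's composition. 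Composing with the surjection of Proposition~\ref{prop:mkclass} gives the desired composite, once we know it hits all of $\op{H}^p_{\op{Nis}}(Y\cap Z,\mathbf{K}^{\op{MW}}_{p+1})$.

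The main point is therefore surjectivity of the first map $[Y\cap Z,\mathbb{A}^{p+1}\setminus\{0\}]_{\mathbb{A}^1}\to\op{H}^p_{\op{Nis}}(Y\cap Z,\mathbf{K}^{\op{MW}}_{p+1})$, and here one uses that $Y\cap Z$ has dimension $p+1$ (it is an open subvariety of the $(p+1)$-dimensional $X$) so that $\op{H}^i_{\op{Nis}}(Y\cap Z,-)$ vanishes for $i>p+1$. The obstruction to lifting a class in $\op{H}^p_{\op{Nis}}(Y\cap Z,\mathbf{K}^{\op{MW}}_{p+1})=[Y\cap Z,\op{K}(\mathbf{K}^{\op{MW}}_{p+1},p)]_{\mathbb{A}^1}$ through the next Postnikov stage lies in $\op{H}^{p+2}_{\op{Nis}}(Y\cap Z,\bm{\pi}^{\mathbb{A}^1}_{p+1}(\mathbb{A}^{p+1}\setminus\{0\}))$, which vanishes for dimension reasons since $p+2>p+1$. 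Hence every such class lifts to $\tau_{\leq p+1}(\mathbb{A}^{p+1}\setminus\{0\})$; iterating, each successive obstruction group $\op{H}^{j+2}_{\op{Nis}}(Y\cap Z,\bm{\pi}^{\mathbb{A}^1}_{j+1})$ with $j+1>p$, hence $j+2>p+1$, also vanishes, so the class lifts all the way to $\mathbb{A}^{p+1}\setminus\{0\}\simeq\op{holim}_i\tau_{\leq i}(\mathbb{A}^{p+1}\setminus\{0\})$. This produces a map $Y\cap Z\to\mathbb{A}^{p+1}\setminus\{0\}$ whose image under the truncation is the given class, proving surjectivity; equivalently, one may invoke the standard fact that for a space whose $\mathbb{A}^1$-homotopy sheaves above degree $p$ are ``invisible'' to $Y\cap Z$, the first Postnikov truncation induces a bijection on homotopy classes of maps from $Y\cap Z$.

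Finally, one translates from $\mathbb{A}^{p+1}\setminus\{0\}$ to $\op{Q}_{2p+1}$: by Section~\ref{sec:quadrics}, projection to the first $p+1$ coordinates is an $\mathbb{A}^1$-equivalence $\op{Q}_{2p+1}\xrightarrow{\sim}\mathbb{A}^{p+1}\setminus\{0\}$, so $[Y\cap Z,\op{Q}_{2p+1}]_{\mathbb{A}^1}\cong[Y\cap Z,\mathbb{A}^{p+1}\setminus\{0\}]_{\mathbb{A}^1}$, and the stated composite is obtained by transporting the above maps along this equivalence and then applying Proposition~\ref{prop:mkclass}. I do not expect any serious obstacle here: the only subtlety is the dimension bookkeeping ($\dim(Y\cap Z)=p+1$, so Nisnevich cohomological dimension $p+1$, killing all obstruction and indeterminacy groups in degrees $\geq p+2$), together with the requirement that $Y\cap Z$ be smooth affine so that the $\mathbb{A}^1$-homotopy classification and the quadric model of Section~\ref{sec:quadrics} apply — both of which hold by the geometric setup.
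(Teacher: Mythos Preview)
Your proof is correct and follows the paper's approach: the paper simply cites \cite[Proposition 1.1.10(1)]{AsokFaselCohomotopy} for surjectivity of the first map (using that $\op{Q}_{2p+1}$ is $(p-1)$-$\mathbb{A}^1$-connected and $\dim(Y\cap Z)=p+1$), and you have unpacked the obstruction-theoretic content of that reference. One minor correction to your aside: the truncation need not induce a \emph{bijection}, since the lifting indeterminacy in $\op{H}^{p+1}_{\op{Nis}}(Y\cap Z,\bm{\pi}^{\mathbb{A}^1}_{p+1})$ need not vanish on a $(p+1)$-dimensional scheme---but you only use surjectivity, so this does not affect the argument.
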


\begin{proof}
By Proposition~\ref{prop:mkclass} it suffices to show that the first map is a surjection. This follows from \cite[Proposition 1.1.10 (1)]{AsokFaselCohomotopy} since $\op{Q}_{2p+1}$ is $(p-1)$-$\mathbb{A}^1$-connected and $Y\cap Z$ has Krull dimension $p+1$.
\end{proof}

\begin{remark}
It would be very interesting to have more generally a construction of smooth affine varieties with non-trivial classes in $\op{H}^n_{\op{Nis}}(X,\mathbf{K}^{\op{M}}_{n+r}/m)$ for $r\geq 2$. Possibly this could be done by setting up a Mayer--Vietoris spectral sequence for coverings $X=U_1\cup\cdots\cup U_{r+1}$ in Milnor K-cohomology and then use that to produce such varieties as intersections $U_1\cap\cdots\cap U_{r+1}$. The next interesting case would be $r=2$. For this, one would want a variety $X$ with a covering $X=U_1\cup U_2\cup U_3$, a non-trivial class in $\op{CH}^{n+2}(X)/m$ whose restriction to $U_i$ vanishes \emph{and} such that the induced classes in $\op{H}^{n+1}_{\op{Nis}}(U_i\cap U_j,\mathbf{K}^{\op{M}}_{n+2}/m)$ are also trivial at $E_\infty$ (either by not being a cycle in $E_1$ or by lifting to $U_i$ and thus be killed by the $\op{d}^1$-differential). The Mayer--Vietoris spectral sequence would then produce a non-trivial class in $\op{H}^n_{\op{Nis}}(U_1\cap U_2\cap U_3,\mathbf{K}^{\op{M}}_{n+2}/m)$. However, at this point I don't know how to guarantee the latter condition on the vanishing of $\op{H}^{n+1}_{\op{Nis}}(\mathbf{K}^{\op{M}}_{n+2}/m)$. As pointed out by one of the referees, the base field would have to be of higher cohomological dimension, such as a function field in $r$ variables, to have a chance for the resulting cohomology classes to be nontrivial. 
\end{remark}

\subsection{Explicit description of a class}
\label{sec:explicit}

We now want to write out an explicit description of a Milnor K-cohomology class whose boundary can be detected on the Chow group. Note that the smooth affine variety $X$ is defined as the complement of a hypersurface such that each point on the hypersurface has degree divisible by $p$ (which is exactly the reason for the $p$-torsion in $\op{CH}^{p+1}(X)$). Now the complements of $Y$ and $Z$ in $X$ are given by hypersurfaces which contain rational points (which is the reason why the top Chow groups of $Y$ and $Z$ are trivial). Note however that the hypersurface complements of $Y$ and $Z$ only intersect in the complement of $X$, so they don't have rational points in common. On $Y\cap Z$ there are hence two reasons for triviality of $\op{CH}^{p+1}(Y\cap Z)$, namely  a rational point in the complement of $Y$ or a rational point in the complement of $Z$. The lift in the Mayer--Vietoris sequence is then given by a ``homotopy between these two trivializations'': take a line in $\mathbb{P}^{p+1}$ connecting a rational point in $X\setminus Y$ and a rational point in $X\setminus Z$. On this line there is a rational function having divisor exactly the difference of these rational points. The class of this rational function in the $p$-residues of the function field of the line is a cycle on $Y\cap Z$, hence represents a class in $\op{H}^p_{\op{Nis}}(Y\cap Z,\mathbf{K}^{\op{M}}_{p+1}/p)$. Note that this is exactly the geometric situation in \cite[Claim 3]{mohan:kumar}, and we will see in \ref{sec:comparison} that this is the lifting class of Mohan Kumar's stably free module. 

Now we want to show that the  class we described in $\op{H}^p_{\op{Nis}}(Y\cap Z,\mathbf{K}^{\op{M}}_{p+1}/p)$ is actually non-trivial. This non-triviality is detected using the Mayer--Vietoris sequence associated to the covering $X=Y\cup Z$, cf. \ref{sec:mv}. More specifically, we want to show that the above cycle has non-trivial image under the boundary map
\[
\op{H}^p_{\op{Nis}}(Y\cap Z,\mathbf{K}^{\op{M}}_{p+1}/p)\to \op{CH}^{p+1}(X)/p.
\]
To compute the image of the class, recall the description of the boundary map from \ref{sec:mv}. The cycle description of the class above was that we take the rational function with divisor $[y]-[x]$ as an element in the mod $p$ residues of the function field of the line connecting the points $x$ and $y$. To compute the boundary in the Mayer--Vietoris sequence, we first take the Gersten chain on $Y$ given by the very same rational function on the very same line. Now we apply the boundary map in the Gersten complex, which in our situation is given by mapping the rational function on the line $l\cap Y$ to its divisor. The rational function on the line $l$ has a zero at $y\in Y$ and a pole at $x\in X$, hence its divisor on the line $l$ is $[y]-[x]$. However, since $x\not\in Y$, the divisor of the function in the Gersten complex for $Y$ is $[y]$. The final step in the computation of the boundary for the Mayer--Vietoris sequence is to view $[y]$ as a $0$-cycle on the whole variety $X$. Since $y$ is a $k$-rational point, the class $[y]$ is a generator of $\op{CH}^{p+1}(X)$. 

We have shown the following:
\begin{proposition}
\label{prop:explicit}
Denote by $\sigma\in\op{H}^p_{\op{Nis}}(Y\cap Z,\mathbf{K}^{\op{M}}_{p+1}/p)$ the  class of a rational function with divisor $[y]-[x]$ supported on the line connecting $x$ and $y$. Then the image of $\sigma$ under the boundary map 
\[
\op{H}^p_{\op{Nis}}(Y\cap Z,\mathbf{K}^{\op{M}}_{p+1}/p)\to \op{CH}^{p+1}(X)/p
\]
is a generator of $\op{CH}^{p+1}(X)/p\cong\mathbb{Z}/p\mathbb{Z}$. 
\end{proposition}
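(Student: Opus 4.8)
The plan is to compute the Mayer--Vietoris boundary $\partial\sigma$ directly on the Rost--Schmid (Gersten) complexes, using the explicit description of that boundary recalled in \ref{sec:mv}. Write $l\subset\mathbb{P}^{p+1}$ for the line joining $x=[0:\cdots:0:1]$ and $y=[0:\cdots:0:1:1]$, and let $g\in k(l)^\times$ be a rational function whose divisor on $l$ is $[y]-[x]$; such $g$ exists since $l\cong\mathbb{P}^1$. First I would record the incidences of $x$ and $y$ with the covering. Besides $x\in X\setminus Y$ and $y\in X\setminus Z$ from the construction, one has $x\in Z$ (since $G(x)=-a^{(p^{n-1}-1)/(p-1)}\neq 0$, where $n=p+1$) and $y\in Y$ (since $G(y)=0$ forces $F_{n-1}(y)=a^{(p^{n-1}-1)/(p-1)}\neq 0$). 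In particular the irreducible curve $l$ is contained in none of $\op{V}(F_n)$, $\op{V}(F_{n-1})$, $\op{V}(G)$, so $l\cap Y\cap Z$ is a dense open subscheme of $l$; its generic point is a codimension $p$ point of $Y\cap Z$ with residue field $k(l)$, and since the divisor $[y]-[x]$ is supported outside $Y\cap Z$, the class of $g$ in $(\mathbf{K}^{\op{M}}_{p+1}/p)_{-p}(k(l))=k(l)^\times/(k(l)^\times)^p$ is a cycle representing $\sigma$.

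Next I would push this cycle through the boundary recipe of \ref{sec:mv}: view it as a chain in the summand $\op{C}^p(Y,\mathbf{K}^{\op{M}}_{p+1}/p)$ on which the restriction to $Y\cap Z$ is $+\op{id}$ (legitimate, as $l\cap Y$ is dense open in $l$ with generic point $k(l)$), and apply the Rost--Schmid differential. Here that differential reduces to the classical divisor map: $l\cong\mathbb{P}^1$ is already normal, so no normalization enters; the surviving boundary point $y$ is $k$-rational, so the associated transfer is the identity; and the line-bundle twists are canonically trivial for $\mathbf{K}^{\op{M}}_{p+1}/p$, on which $\eta$ acts as zero. Thus the differential sends $g$ to $\sum_z v_z(g)[z]$, the sum over closed points $z$ of $l$ lying in $Y$; since the divisor of $g$ on $l$ is $[y]-[x]$ with $x\notin Y$ and $y\in Y$, only the term $[y]$ survives, with multiplicity $1$. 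So the image chain is the cycle $[y]\in\op{C}^{p+1}(Y,\mathbf{K}^{\op{M}}_{p+1}/p)$, with coefficient $1\in\mathbf{K}^{\op{M}}_0(\kappa(y))/p=\mathbb{Z}/p\mathbb{Z}$ at $y$.

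Now $y\notin Z$, so this chain restricts to zero in $\op{C}^{p+1}(Y\cap Z,\mathbf{K}^{\op{M}}_{p+1}/p)$; by exactness of $0\to\op{C}^\bullet(X,\mathbf{K}^{\op{M}}_{p+1}/p)\to\op{C}^\bullet(Y,\mathbf{K}^{\op{M}}_{p+1}/p)\oplus\op{C}^\bullet(Z,\mathbf{K}^{\op{M}}_{p+1}/p)\to\op{C}^\bullet(Y\cap Z,\mathbf{K}^{\op{M}}_{p+1}/p)\to 0$ it is therefore the image of the cycle $[y]\in\op{C}^{p+1}(X,\mathbf{K}^{\op{M}}_{p+1}/p)$, which indeed restricts to $[y]$ on $Y$ and to $0$ on $Z$; by \ref{sec:mv} this cycle represents $\partial\sigma$. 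Since $\dim X=p+1$, the top Rost--Schmid group computes $\op{H}^{p+1}_{\op{Nis}}(X,\mathbf{K}^{\op{M}}_{p+1}/p)$, and under the identification of this group with $\op{CH}^{p+1}(X)/p$ — coming from the long exact cohomology sequence of $0\to\mathbf{K}^{\op{M}}_{p+1}\xrightarrow{\,\cdot p\,}\mathbf{K}^{\op{M}}_{p+1}\to\mathbf{K}^{\op{M}}_{p+1}/p\to 0$ together with $\op{H}^{p+2}_{\op{Nis}}(X,\mathbf{K}^{\op{M}}_{p+1})=0$ — the cycle $[y]$ goes to the class of the zero-cycle $[y]$. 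As $y$ is a $k$-rational point, $[y]$ generates $\op{CH}^{p+1}(X)\cong\mathbb{Z}/p\mathbb{Z}$ by \cite[Claim 2]{mohan:kumar} (the argument there, via the localization sequence for $\mathbb{P}^{p+1}\setminus\op{V}(F_n)$, shows that the class of any degree-one zero-cycle is a generator), hence also generates $\op{CH}^{p+1}(X)/p\cong\mathbb{Z}/p\mathbb{Z}$, which is the assertion.

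The step I expect to take the most care is the second one: one must make sure the abstract Mayer--Vietoris boundary of \ref{sec:mv} really does reduce here to the naive recipe ``lift $g$ from $l\cap(Y\cap Z)$ to $l\cap Y$, take its divisor there, and reinterpret the result as a zero-cycle on $X$'', and in particular that the Rost--Schmid differential coincides with the classical divisor map — so that the normalizations, transfers and twists appearing in its general definition genuinely drop out (the supporting curve $l$ is smooth rational and the one surviving point is $k$-rational). Once that bookkeeping is settled, the geometric heart of the matter — that passing from $Y\cap Z$ to $Y$ discards the pole at $x$ but keeps the zero at $y$ — is immediate from the incidences $x\in X\setminus Y$, $y\in Y\setminus Z$ recorded in the first step.
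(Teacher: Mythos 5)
Your argument is correct and follows essentially the same route as the paper's: lift the cycle to $\op{C}^p(Y,\mathbf{K}^{\op{M}}_{p+1}/p)$, apply the Gersten differential (the divisor map on the line $l$), note that only the zero at $y$ survives since $x\notin Y$, and recognize $[y]$ as a generator of $\op{CH}^{p+1}(X)/p$ because $y$ is $k$-rational. You simply supply more of the bookkeeping the paper leaves implicit (the incidences $x\in Z\setminus Y$, $y\in Y\setminus Z$; that the Rost--Schmid differential collapses to the naive divisor map because $l$ is smooth rational, $y$ is rational, and the twists are trivial for the orientable sheaf $\mathbf{K}^{\op{M}}_{p+1}/p$; and the identification $\op{H}^{p+1}_{\op{Nis}}(X,\mathbf{K}^{\op{M}}_{p+1}/p)\cong\op{CH}^{p+1}(X)/p$), but the content is the same.
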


\begin{remark}
Another approach to the construction of a non-trivial class in the group $\op{H}^{p-1}_{\op{Nis}}(W,\mathbf{K}^{\op{M}}_{p+1}/p)$ could now be to use the construction above. If we can provide a covering $Y\cap Z=W_1\cup W_2$ on which the  Milnor K-cohomology class in $\op{H}^p_{\op{Nis}}(Y\cap Z,\mathbf{K}^{\op{M}}_{p+1}/p)$ is trivialized, then the Mayer--Vietoris sequence would produce such a class. The Milnor K-cohomology class can be represented by a line in $\mathbb{P}^n$ connecting the points $x$ and $y$. Actually, using Milnor $\op{K}_2$-classes associated to the coordinate axes in $\mathbb{P}^2$, any line in $\mathbb{P}^{p+1}$ connecting rational points on $X\setminus Y$ and $X\setminus Z$ will represent the same cohomology class. If we now can find two such lines $L_1,L_2$ contained in hypersurfaces $S_1,S_2$ which only meet outside $Y\cap Z$, then the Mayer--Vietoris sequence associated to the covering $Y\cap Z=(Y\cap Z\setminus S_1)\cup (Y\cap Z\setminus S_2)$ would produce the required class.  Unfortunately, I don't know how to construct the appropriate hypersurfaces. 
\end{remark}

\subsection{Comparison with Mohan Kumar's construction}
\label{sec:comparison}

Now we want to compare this to the original construction of stably free modules in \cite{mohan:kumar}. In fact, we will show that the unimodular row defining Mohan Kumar's stably free module maps exactly to a cohomology class as described above.

Recall from \cite{mohan:kumar} that the stably free module $\mathscr{P}$ is given by a unimodular row as follows. The point $y$ is a complete intersection in $Y$, i.e., its maximal ideal is of the form $\mathfrak{m}_y=\langle b_1,\dots,b_n\rangle$ for a regular sequence of functions $b_1,\dots,b_n\in\mathcal{O}_Y(Y)$. Because $y\not\in  Z$, we can now consider $(b_1,\dots,b_n)$ as a unimodular row on $Y\cap Z$. The stably free module $\mathscr{P}$ over $Y\cap Z$ is now the one defined as the kernel of this unimodular row, cf. p.1441 of \cite{mohan:kumar}. Note that, compared to the situation in \ref{sec:explicit}, the regular sequence $(b_2,\dots,b_n)$ defines the intersection of the line connecting $x$ and $y$ with $Y$  in $Y$, cf. \cite[proof of Claim 3]{mohan:kumar}, and $b_1$ can be taken to be a function having a simple zero at $y$ and a simple pole at $x$.

It remains to recall the description of the map $[Y\cap Z,\mathbb{A}^{p+1}\setminus\{0\}]_{\mathbb{A}^1}\to\op{H}^p(Y\cap Z,\mathbf{K}^{\op{MW}}_{p+1})$ sending a map $Y\cap Z\to\mathbb{A}^{p+1}\setminus\{0\}$ corresponding to a unimodular row of length $p+1$ to its associated lifting class. Recall that for each unimodular row of length $p+1$ over the ring $R$ there is a morphism $u\colon\op{Spec} R\to \op{Q}_{2p+1}\simeq_{\mathbb{A}^1}\mathbb{A}^{p+1}\setminus \{0\}$, well-defined up to $\mathbb{A}^1$-homotopy. The first lifting class associated to the morphism $Y\cap Z\to\op{Q}_{2p+1}$ is given by the composition 
\[
Y\cap Z\to\op{Q}_{2p+1}\to \tau_{\leq p}\op{Q}_{2p+1}\cong \op{K}(\mathbf{K}^{\op{MW}}_{p+1},p).
\]
This composition corresponds to a cohomology class in $\op{H}^p_{\op{Nis}}(Y\cap Z,\mathbf{K}^{\op{MW}}_{p+1})$ which can be evaluated using the techniques discussed in the proof of \cite[Theorem 4.1]{fasel:unimodular}: without loss of generality, we can assume that the unimodular row $(b_1,\dots,b_{p+1})$ is such that the sequence $(b_2,\dots,b_{p+1})$ is regular; the first lifting class of the unimodular row in $\op{H}^p_{\op{Nis}}(Y\cap Z,\mathbf{K}^{\op{MW}}_{p+1})$ is then given by the cycle $(b_1,\langle -1,b_1\rangle)$ on the subscheme defined by $(b_2,\dots,b_{p+1})$. Reduction of coefficients to $\mathbf{K}^{\op{M}}_{p+1}/p!$ means that the class is given by the unit $b_1$ on the closed integral subscheme defined by $(b_2,\dots,b_{p+1})$.

We formulate the combination of the above statements which is implicitly contained in \cite[Theorem 4.1]{fasel:unimodular}:

\begin{proposition}
\label{prop:lifting}
Let $k$ be an infinite field, and let $X=\op{Spec}R$ be a smooth affine scheme over $k$. Let $(b_1,\dots,b_{p+1})$ be a unimodular row over $R$ such that the sequence $(b_2,\dots,b_{p+1})$ is regular and denote by $P$ the associated stably free module of rank $p$. The first lifting class associated to $P$ in $\op{H}^p_{\op{Nis}}(X,\mathbf{K}^{\op{M}}_{p+1}/p!)$ is given by the cycle whose underlying codimension $p$ scheme is $R/(b_2,\dots,b_{p+1})$ and the associated rational function on it is $b_1$.
\end{proposition}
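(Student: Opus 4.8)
The plan is to reduce the statement to the computation carried out in the proof of \cite[Theorem 4.1]{fasel:unimodular}, tracing the unimodular row through the two maps in the composition
\[
[X,\mathbb{A}^{p+1}\setminus\{0\}]_{\mathbb{A}^1}\to\op{H}^p_{\op{Nis}}(X,\mathbf{K}^{\op{MW}}_{p+1})\to\op{H}^p_{\op{Nis}}(X,\mathbf{K}^{\op{M}}_{p+1}/p!).
\]
First I would recall that, since $(b_2,\dots,b_{p+1})$ is a regular sequence and $(b_1,\dots,b_{p+1})$ is unimodular, the closed subscheme $C=\op{Spec} R/(b_2,\dots,b_{p+1})$ has codimension $p$ in $X$ and $b_1$ restricts to a unit on $C$ (because the $b_i$ generate the unit ideal, $b_1$ is a unit modulo $(b_2,\dots,b_{p+1})$). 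The morphism $u\colon X\to\op{Q}_{2p+1}\simeq_{\mathbb{A}^1}\mathbb{A}^{p+1}\setminus\{0\}$ attached to the row is well-defined up to $\mathbb{A}^1$-homotopy by the discussion in Section~\ref{sec:quadrics}, and its first lifting class is the pullback along $u$ of the fundamental class in $\op{H}^p_{\op{Nis}}(\op{Q}_{2p+1},\mathbf{K}^{\op{MW}}_{p+1})\cong(\mathbf{K}^{\op{MW}}_{p+1})_{-(p+1)}(k)=\mathbf{GW}(k)$ corresponding to the canonical generator under $\tau_{\leq p}\op{Q}_{2p+1}\cong\op{K}(\mathbf{K}^{\op{MW}}_{p+1},p)$.

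The key step is the explicit Rost--Schmid cycle computation of this pullback, which is exactly the content of the proof of \cite[Theorem 4.1]{fasel:unimodular}: one chooses the geometric presentation so that $V(b_2,\dots,b_{p+1})=C$ carries the cycle, and the Milnor--Witt class is represented by $\langle -1, b_1\rangle$-type data, i.e. the cycle $(b_1,\langle-1,b_1\rangle)$ supported on $C$ (with the appropriate twist by the conormal bundle of $C$, trivialized by the regular sequence $b_2,\dots,b_{p+1}$). I would cite this verbatim rather than re-derive it, noting only that the regularity hypothesis on $(b_2,\dots,b_{p+1})$ is precisely what makes the cycle supported on an honest codimension-$p$ closed subscheme with trivialized normal bundle. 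Then I apply the reduction of coefficients $\mathbf{K}^{\op{MW}}_{p+1}\to\mathbf{K}^{\op{M}}_{p+1}\to\mathbf{K}^{\op{M}}_{p+1}/p!$: the first map kills $\eta$, so $\langle-1,b_1\rangle\in\mathbf{GW}(\kappa(C))$ maps to the class of the unit $b_1$ in $\mathbf{K}^{\op{M}}_1(\kappa(C))=\kappa(C)^\times$, and twisting disappears after this reduction because $(\mathbf{K}^{\op{M}}_{p+1}/p!)_{-p}$ has no nontrivial $\mathbb{G}_{\op{m}}$-action issues at the level of a single unit. Hence the image is the cycle whose underlying codimension-$p$ scheme is $R/(b_2,\dots,b_{p+1})$ with associated function $b_1$, as claimed.

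The main obstacle is bookkeeping rather than conceptual: one must be careful that the "first lifting class" is genuinely pulled back from the generator of $\op{H}^p_{\op{Nis}}(\op{Q}_{2p+1},\mathbf{K}^{\op{MW}}_{p+1})$ and that the identification $\tau_{\leq p}\op{Q}_{2p+1}\cong\op{K}(\mathbf{K}^{\op{MW}}_{p+1},p)$ is normalized so that this generator corresponds to $1\in\mathbf{GW}(k)$; and one must check that the twist by $\Lambda^X_C$ in the Rost--Schmid description, which is genuinely present for $\mathbf{K}^{\op{MW}}$, becomes harmless after passing to $\mathbf{K}^{\op{M}}_{p+1}/p!$. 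Both points are handled inside \cite{fasel:unimodular}, so the proof is essentially a citation together with the elementary observation that killing $\eta$ sends $\langle-1,b_1\rangle$ to $\{b_1\}$. This is the level of detail appropriate here, since the substantive work is Fasel's.
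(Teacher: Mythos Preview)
Your proposal is correct and follows essentially the same route as the paper: the paper's ``proof'' is the paragraph immediately preceding the proposition, which likewise cites \cite[Theorem 4.1]{fasel:unimodular} for the Rost--Schmid cycle $(b_1,\langle -1,b_1\rangle)$ on $V(b_2,\dots,b_{p+1})$ and then reduces coefficients to $\mathbf{K}^{\op{M}}_{p+1}/p!$. One small notational slip: the cycle data lives in $\mathbf{K}^{\op{MW}}_1(\kappa(C);\Lambda^X_C)$ (the $p$-fold contraction), not in $\mathbf{GW}(\kappa(C))$, so what maps to $\{b_1\}\in\mathbf{K}^{\op{M}}_1$ under $\eta\mapsto 0$ is the symbol $[b_1]$ rather than the rank-two form $\langle -1,b_1\rangle$; this does not affect the argument.
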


Applying this statement to the specific case of Mohan Kumar's stably free modules, the lifting class is given by a rational function with divisor $[y]-[x]$ viewed as $p$-residue in the function field of the line connecting $x$ and $y$, cf. \cite[Claim 3]{mohan:kumar}. Note that this is exactly the class discussed in \ref{sec:explicit}. In particular, Proposition~\ref{prop:explicit} now implies that the lifting class of Mohan Kumar's stably free module is a non-trivial class in $\op{H}^p_{\op{Nis}}(Y\cap Z,\mathbf{K}^{\op{M}}_{p+1}/p!)$, mapping to a generator of $\op{CH}^{p+1}(X)/p!\cong\mathbb{Z}/p\mathbb{Z}$ under the boundary map of the Mayer--Vietoris sequence. 

It should be mentioned that actually writing down an explicit unimodular row for Mohan Kumar's construction is a difficult task. The existence of the unimodular row only requires knowing that $y$ is a complete intersection point, while writing down an explicit unimodular row requires finding an explicit regular sequence generating the ideal defining the point $y$.

\section{Stably free modules: Mohan Kumar's examples at odd primes}

In the previous section, we discussed a cohomological reinterpretation of Mohan Kumar's constructions from \cite{mohan:kumar}. This provided, in particular, a smooth affine variety $Y\cap Z$ with a morphism $Y\cap Z\to\mathbb{A}^{p+1}\setminus\{0\}$ which is detected in Milnor K-cohomology. This morphism can be composed with the natural map $\mathbb{A}^{p+1}\setminus\{0\}\to{\op{B}}\op{SL}_p$ which is the inclusion of the $\mathbb{A}^1$-homotopy fiber of the stabilization map ${\op{B}}\op{SL}_p\to {\op{B}}\op{SL}_{p+1}$. This produces a rank $p$ vector bundle which becomes trivial after adding a free rank one summand; it is an $\mathbb{A}^1$-topological reformulation of the fact that the kernel of a unimodular row of length $p+1$ is a projective module of rank $p$ which becomes trivial after adding a free rank one module. The morphism $\mathbb{A}^{p+1}\setminus\{0\}\to{\op{B}}\op{SL}_p$ induces a morphism $\op{H}^p(Y\cap Z,\mathbf{K}^{\op{MW}}_{p+1})\to \op{H}^p(Y\cap Z,\bm{\pi}^{\mathbb{A}^1}_p{\op{B}}\op{SL}_p)$.  The main point of the present section will now be to show that the composition
\[
[Y\cap Z,\mathbb{A}^{p+1}\setminus\{0\}]_{\mathbb{A}^1}\to \op{H}^p(Y\cap Z,\mathbf{K}^{\op{MW}}_{p+1}) \to \op{H}^p(Y\cap Z,\bm{\pi}^{\mathbb{A}^1}_p{\op{B}}\op{SL}_p)
\]
sends the unimodular row given by Mohan Kumar to a non-zero element, thus providing an $\mathbb{A}^1$-topological proof that the stably free module is non-trivial. For this, we will need to recall some information on the $\mathbb{A}^1$-homotopy sheaf $\bm{\pi}^{\mathbb{A}^1}_p({\op{B}}\op{SL}_p)$. 

For the present section, $p$ will be an odd prime, the case $p=2$ will be discussed in the next section. This case distinction is due to a structural difference of the relevant $\mathbb{A}^1$-homotopy sheaves; Mohan Kumar's constructions in \cite{mohan:kumar} work for even and odd primes.

\subsection{$\mathbb{A}^1$-homotopy groups and a cohomology operation}
\label{sec:a1homotopy}

The first thing to note is that ${\op{B}}\op{SL}_n$ is simply connected for all $n$. The stabilization results imply that 
\[
\bm{\pi}^{\mathbb{A}^1}_i({\op{B}}\op{SL}_n)\cong \mathbf{K}^{\op{Q}}_i
\]
for $i<n$. The corresponding lifting classes are related to the Chern classes, but the non-uniqueness of lifting classes implies that making this relationship precise is a rather subtle business.

The first unstable $\mathbb{A}^1$-homotopy group of ${\op{B}}\op{SL}_n$ has been computed in \cite[Theorem 3.14]{AsokFaselSpheres}; for odd $n$, it is given by an exact sequence 
\[
0\to\mathbf{S}_{n+1}\to\bm{\pi}^{\mathbb{A}^1}_n({\op{B}}\op{SL}_n)\to \mathbf{K}^{\op{Q}}_n\to 0. 
\]
This exact sequence arises from the long exact homotopy sequence for the stabilization fiber sequence $\mathbb{A}^{n+1}\setminus\{0\}\to{\op{B}}\op{SL}_n\to{\op{B}}\op{SL}_{n+1}$; consequently, the sheaf $\mathbf{S}_{n+1}$ is the cokernel of the boundary map $\bm{\pi}^{\mathbb{A}^1}_{n+1}({\op{B}}\op{SL}_{n+1})\to \bm{\pi}^{\mathbb{A}^1}_{n}(\mathbb{A}^{n+1}\setminus\{0\})\cong\mathbf{K}^{\op{MW}}_{n+1}$. By \cite[Corollary 3.11]{AsokFaselSpheres}, the canonical epimorphism $\mathbf{K}^{\op{MW}}_{n+1}\to\mathbf{S}_{n+1}$ factors through a canonical epimorphism $\mathbf{K}_{n+1}^{\op{M}}/n!\to \mathbf{S}_{n+1}$ which becomes an isomorphism after $n-1$-fold contraction. 

The main point of the present section is the analysis of a cohomology operation associated to the $\mathbb{A}^1$-homotopy sheaf $\bm{\pi}^{\mathbb{A}^1}_n({\op{B}}\op{SL}_n)$. The short exact sequence of strictly $\mathbb{A}^1$-invariant sheaves above induces a long exact sequence in Nisnevich cohomology whose boundary map has the form 
\[
\op{CH}^n(X)\cong\op{H}^n(X,\mathbf{K}^{\op{Q}}_n)\to \op{H}^{n+1}(X,\mathbf{S}_{n+1})\cong \op{CH}^{n+1}(X)/n!
\]
For later arguments, we want to show that this boundary map is almost trivial. 

\begin{proposition}
\label{prop:cohop}
Let $k$ be a field, let $X$ be a smooth $k$-variety and let $n$ be an odd integer. Then the composition of the boundary map $\op{CH}^n(X)\to \op{CH}^{n+1}(X)/n!$ induced from the exact sequence 
\[
0\to \mathbf{S}_{n+1}\to \bm{\pi}^{\mathbb{A}^1}_n({\op{B}}\op{SL}_n)\to \mathbf{K}^{\op{Q}}_n\to 0
\]
with the natural reduction map $\op{CH}^{n+1}(X)/n!\to\op{CH}^{n+1}(X)/n$ is trivial. 
\end{proposition}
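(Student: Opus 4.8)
The plan is to understand the boundary map $\op{CH}^n(X)\to\op{CH}^{n+1}(X)/n!$ as a concrete cohomology operation on cycle representatives and then show it becomes trivial after reduction modulo $n$. First I would unwind the definition: a class in $\op{CH}^n(X)\cong\op{H}^n(X,\mathbf{K}^{\op{Q}}_n)$ is represented in the Rost--Schmid complex by a sum of codimension-$n$ points, and the boundary in the long exact sequence associated to $0\to\mathbf{S}_{n+1}\to\bm{\pi}^{\mathbb{A}^1}_n({\op{B}}\op{SL}_n)\to\mathbf{K}^{\op{Q}}_n\to 0$ is computed by lifting such a cycle to a chain with values in $\bm{\pi}^{\mathbb{A}^1}_n({\op{B}}\op{SL}_n)$ and applying the Rost--Schmid differential. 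Since $\mathbf{S}_{n+1}$ is a quotient of $\mathbf{K}^{\op{MW}}_{n+1}$ — indeed of $\mathbf{K}^{\op{M}}_{n+1}/n!$ after the appropriate contraction, by \cite[Corollary 3.11]{AsokFaselSpheres} — the receiving group is (a quotient of) $\op{H}^{n+1}(X,\mathbf{K}^{\op{M}}_{n+1}/n!)\cong\op{CH}^{n+1}(X)/n!$. The key is that the extension $0\to\mathbf{S}_{n+1}\to\bm{\pi}^{\mathbb{A}^1}_n({\op{B}}\op{SL}_n)\to\mathbf{K}^{\op{Q}}_n\to 0$ splits, or at least splits after the reduction that makes the target $\op{CH}^{n+1}(X)/n$.

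Concretely, I would argue as follows. The first Chern class map gives a splitting $\mathbf{K}^{\op{Q}}_n\to\bm{\pi}^{\mathbb{A}^1}_n({\op{B}}\op{SL}_n)$ up to lower-weight phenomena; more robustly, one can use that the sheaf $\mathbf{K}^{\op{Q}}_n$ is generated (as a strictly $\mathbb{A}^1$-invariant sheaf, via the Rost--Schmid presentation) by symbols coming from products of units, and these lift to $\bm{\pi}^{\mathbb{A}^1}_n({\op{B}}\op{SL}_n)$ along the elementary matrices, i.e., the map ${\mathbb{G}_{\op{m}}}^{\wedge n}\to{\op{B}}\op{SL}_n$ classifying the appropriate bundle realizes these symbols and is compatible with the stabilization. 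On such generators the boundary vanishes by construction, so the boundary map $\op{CH}^n(X)\to\op{CH}^{n+1}(X)/n!$, being a cohomology operation, is determined by its effect on a "universal" class and hence is computed by the $k$-invariant $k_{n}\in\op{H}^{n+1}(\op{K}(\mathbf{K}^{\op{Q}}_n,n),\mathbf{S}_{n+1})$. After reduction to $\mathbf{S}_{n+1}\to\mathbf{K}^{\op{M}}_{n+1}/n$ and then $\mathbf{K}^{\op{M}}_{n+1}/n\to\mathbf{K}^{\op{M}}_{n+1}/n$, this operation lands in the mod-$n$ motivic cohomology operations from $\op{K}(\mathbf{K}^{\op{M}}_n,n)$ of weight one higher, and the relevant group of stable (and unstable in this narrow range) operations of that bidegree is generated by reduced power / Bockstein-type operations which vanish for parity reasons when $n$ is odd — the crucial parenthetical input being that $n$ odd kills the relevant $2$-torsion phenomenon flagged in the acknowledgements. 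I would identify this operation with (a multiple of) the composite of a Bockstein with a Steenrod-type operation and check it is zero modulo $n$ using the known structure of motivic cohomology operations over a field (Voevodsky), or alternatively by a direct Rost--Schmid computation on the universal weight-$n$ symbol $[u_1]\cdots[u_n]$ on ${\mathbb{G}_{\op{m}}}^{\times n}$ showing the lift is already a cycle mod $n$.

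The main obstacle I expect is precisely the explicit identification and vanishing of this cohomology operation: one must either invoke a clean description of the $k$-invariant of ${\op{B}}\op{SL}_n$ at this stage (which is exactly the subtle "relationship with Chern classes" the excerpt warns is delicate) or push through a hands-on Rost--Schmid computation tracing the boundary of a lift of the universal symbol, being careful about the twists by line bundles $\Lambda^X_y$ and the absolute transfers in the differential. The parity hypothesis $n$ odd should enter at the point where a factor of $2$ (coming from $\eta$-related terms in $\mathbf{K}^{\op{MW}}$, or from the symmetry of a quadratic form) would otherwise obstruct vanishing; since we only need the statement modulo $n$ and $n$ is odd, $2$ is invertible and the offending term dies. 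I would therefore structure the proof as: (1) reduce to the universal case via naturality and the Rost--Schmid presentation; (2) compute the boundary of a chosen lift of the universal symbol in $\op{C}^{n+1}({\mathbb{G}_{\op{m}}}^{\times n},\mathbf{S}_{n+1})$; (3) observe that this boundary, when further reduced mod $n$, is divisible by $2$ (or otherwise congruent to $0$) using $n$ odd, concluding triviality.
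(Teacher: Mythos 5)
Your high-level diagnosis is right and agrees with the paper's: the boundary is computed by lifting a cycle to a $\bm{\pi}^{\mathbb{A}^1}_n({\op{B}}\op{SL}_n)$-valued chain in the Rost--Schmid complex, applying the differential, and observing that the only contribution is a $2$-torsion phenomenon coming from the $\eta$-part of the unit action on the non-orientable sheaf, which dies modulo $n$ since $n$ is odd. That is exactly the mechanism the paper exploits. However, the proposal as written does not execute this and, in places, aims for a stronger or different claim than the one that is actually available.

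Two specific issues. First, you open by asserting that the extension $0\to\mathbf{S}_{n+1}\to\bm{\pi}^{\mathbb{A}^1}_n({\op{B}}\op{SL}_n)\to\mathbf{K}^{\op{Q}}_n\to 0$ ``splits, or at least splits after the reduction.'' The paper neither claims nor uses this; a splitting would make the boundary identically zero in $\op{CH}^{n+1}(X)/n!$, which is strictly stronger than what is proved (and presumably false: the paper only shows the boundary lands in the $2$-torsion of $\mathbb{Z}/n!$). Similarly, the detour through motivic Steenrod/Bockstein operations and the $k$-invariant $k_n$ is not the route the paper takes, and it is not clear it can be carried out: it would require identifying the $k$-invariant of ${\op{B}}\op{SL}_n$ at this stage, which you yourself flag as the delicate point. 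The paper avoids this entirely.

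Second, the ``alternative'' hands-on Rost--Schmid computation that you describe in your last paragraph \emph{is} the paper's proof, but the substantive content is missing. The paper supplies: an identification of $\left(\bm{\pi}^{\mathbb{A}^1}_n({\op{B}}\op{SL}_n)\right)_{-n}(F)$ with rank-$n$ vector bundles on $\op{Q}_{2n,F}$ and of the stabilization projection with the class in $\tilde{\op{K}}_0(\op{Q}_{2n})\cong\mathbb{Z}$ (Lemma~\ref{lem:one}); a canonical lift $\mathcal{V}$ of the generator defined already over the base field (Remark~\ref{rem:vblift}), which gives an explicit Rost--Schmid lift $\sum_i m_i[\mathcal{V}_i/C_i]\otimes\sigma_i$ of a Chow cycle; the fact that the \emph{untwisted} residue of this constant bundle vanishes because it extends over the DVR (Lemma~\ref{lem:three}); and the fact that passing to the twisted residue (which forces a unit action $\langle\pi^m\rangle$) only changes the answer by $\eta\cdot(\dots)$, which is $2$-torsion because $\eta$ is $2$-torsion on $\mathbb{Z}\to\mathbb{Z}/n!\mathbb{Z}$, checked via topological/\'etale realization of the Hopf map (Lemma~\ref{lem:two}). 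Without (i)--(iv) the argument does not close: it is precisely the concrete model of the contractions by vector bundles on $\op{Q}_{2n}$, not a splitting or an operations classification, that lets one see that the boundary is $2$-torsion. So: right mechanism, wrong first-choice route, and the hard steps of the right route are left as obstacles rather than resolved.
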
 

The proof requires tracing through the definition of the boundary map together with some knowledge of the extension class of the exact sequence describing $\bm{\pi}_n^{\mathbb{A}^1}({\op{B}}\op{SL}_n)$ and the unit action on contractions of this sheaf. That is, we need to prove a few lemmas before we can get to the proof of Proposition~\ref{prop:cohop}. We first discuss the explicit realization of $\left(\bm{\pi}^{\mathbb{A}^1}_n{\op{B}}\op{SL}_n\right)_{-n}$ in terms of vector bundles. 

\begin{lemma}
\label{lem:one}
Let $F$ be a field. There is a natural identification 
\[
\op{H}^n(\op{Q}_{2n,F},\bm{\pi}^{\mathbb{A}^1}_n({\op{B}}\op{SL}_n))\cong \left(\bm{\pi}^{\mathbb{A}^1}_n({\op{B}}\op{SL}_n)\right)_{-n}(F)\cong [\op{Q}_{2n,F},{\op{B}}\op{SL}_n]_{\mathbb{A}^1},
\]
and the right-hand side can be further identified with the set of isomorphism classes of vector bundles of rank $n$ over $\op{Q}_{2n,F}$. Moreover, the $n$-fold contraction  
\[
\left(\bm{\pi}^{\mathbb{A}^1}_n({\op{B}}\op{SL}_n)\right)_{-n}\to\mathbb{Z}
\]
of the projection map in the above exact sequence maps a rank $n$ vector bundle on $\op{Q}_{2n,F}$ to its class in $\tilde{\op{K}}_0(\op{Q}_{2n})\cong\mathbb{Z}$. 
\end{lemma}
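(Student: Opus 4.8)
\textbf{Proof plan for Lemma~\ref{lem:one}.}
The plan is to assemble the three identifications from standard inputs recalled earlier in the paper and then track the projection map through them. First I would establish the middle identification. Since $\mathrm{B}\mathrm{SL}_n$ is $\mathbb{A}^1$-simply connected and $(n-1)$-$\mathbb{A}^1$-connected with first non-trivial homotopy sheaf $\bm{\pi}^{\mathbb{A}^1}_n(\mathrm{B}\mathrm{SL}_n)$, its $n$-th Postnikov section is the Eilenberg--Mac~Lane space $\op{K}(\bm{\pi}^{\mathbb{A}^1}_n(\mathrm{B}\mathrm{SL}_n),n)$, and the map $\mathrm{B}\mathrm{SL}_n\to\tau_{\leq n}\mathrm{B}\mathrm{SL}_n$ induces a bijection $[\op{Q}_{2n,F},\mathrm{B}\mathrm{SL}_n]_{\mathbb{A}^1}\cong[\op{Q}_{2n,F},\op{K}(\bm{\pi}^{\mathbb{A}^1}_n(\mathrm{B}\mathrm{SL}_n),n)]_{\mathbb{A}^1}$ because $\op{Q}_{2n,F}$ has ($\mathbb{A}^1$-cohomological) dimension $n$, so the higher obstruction and lifting groups all vanish. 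The target is $\op{H}^n_{\op{Nis}}(\op{Q}_{2n,F},\bm{\pi}^{\mathbb{A}^1}_n(\mathrm{B}\mathrm{SL}_n))$, giving the first isomorphism of the lemma; the second, $\op{H}^n_{\op{Nis}}(\op{Q}_{2n,F},\mathbf{A})\cong\mathbf{A}_{-n}(F)$ for any strictly $\mathbb{A}^1$-invariant $\mathbf{A}$, is precisely the reduced cohomology formula for even quadrics recalled in Section~\ref{sec:quadrics} (note $\tilde{\op{H}}^0$ vanishes so reduced and unreduced agree in degree $n>0$). For the identification with vector bundles I would invoke the representability theorem~\ref{thm:representability} (with $G=\op{SL}_n$, which is special, so all torsors are Zariski-locally trivial and the $\op{Nis}$ decoration is dropped, and the classifying space is $\mathbb{A}^1$-simply connected so pointed and free homotopy classes agree): $[\op{Q}_{2n,F},\mathrm{B}\op{SL}_n]_{\mathbb{A}^1}$ is the set of isomorphism classes of $\op{SL}_n$-torsors, equivalently rank $n$ vector bundles with trivialized determinant; since on $\op{Q}_{2n,F}$ the Picard group vanishes, this is just the set of rank $n$ vector bundles.

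For the final statement, I would identify the projection $\bm{\pi}^{\mathbb{A}^1}_n(\mathrm{B}\op{SL}_n)\to\mathbf{K}^{\op{Q}}_n$ with the stabilization map: it is the map induced on $\bm{\pi}^{\mathbb{A}^1}_n$ by $\mathrm{B}\op{SL}_n\to\mathrm{B}\op{SL}_{n+1}\to\cdots\to\mathrm{B}\op{SL}=\mathrm{B}\op{SL}_\infty$, using that $\bm{\pi}^{\mathbb{A}^1}_n(\mathrm{B}\op{SL})\cong\mathbf{K}^{\op{Q}}_n$ by the stability statement recalled at the start of Section~\ref{sec:a1homotopy} (indeed this is how $\mathbf{S}_{n+1}$ was described as the kernel of stabilization). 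Therefore the $n$-fold contraction of the projection map, under the identifications above, is exactly the map sending a rank $n$ bundle $E$ on $\op{Q}_{2n,F}$ to the image of its class under $[\op{Q}_{2n,F},\mathrm{B}\op{SL}_n]_{\mathbb{A}^1}\to[\op{Q}_{2n,F},\mathrm{B}\op{SL}]_{\mathbb{A}^1}\cong\op{H}^n_{\op{Nis}}(\op{Q}_{2n,F},\mathbf{K}^{\op{Q}}_n)$. Now I would compute the latter group: by the same quadric formula it is $(\mathbf{K}^{\op{Q}}_n)_{-n}(F)$, and since $(\mathbf{K}^{\op{Q}}_n)_{-n}\cong\mathbf{K}^{\op{Q}}_0\cong\mathbb{Z}$ (contraction of Quillen K-theory lowers the index), this recovers $\tilde{\op{K}}_0(\op{Q}_{2n})\cong\mathbb{Z}$; more directly, the map $[\op{Q}_{2n,F},\mathrm{B}\op{SL}]_{\mathbb{A}^1}\to[\op{Q}_{2n,F},\mathbb{Z}\times\mathrm{B}\op{GL}]_{\mathbb{A}^1}=\op{K}_0(\op{Q}_{2n,F})$ is compatible with the reduced decomposition, so the composite sends $E$ to $[E]-n$ in $\tilde{\op{K}}_0(\op{Q}_{2n})\cong\mathbb{Z}$.

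The one point requiring a little care — and I expect it to be the main obstacle — is to check that under all the identifications being assembled, the cohomological map ``$n$-fold contraction of the projection'' really does correspond to the geometric map ``pass to the K-theory class''. Concretely: the isomorphism $[\op{Q}_{2n,F},\mathrm{B}\op{SL}_n]_{\mathbb{A}^1}\cong\op{H}^n_{\op{Nis}}(\op{Q}_{2n,F},\bm{\pi}^{\mathbb{A}^1}_n(\mathrm{B}\op{SL}_n))$ is the Postnikov/first-obstruction identification, and one needs that it is natural in the target space, so that the square relating it (via the stabilization map on the left and the induced sheaf map $\bm{\pi}^{\mathbb{A}^1}_n(\mathrm{B}\op{SL}_n)\to\mathbf{K}^{\op{Q}}_n$ on the right) to the analogous identification for $\mathrm{B}\op{SL}$ (where it is the classical statement $[\op{Q}_{2n,F},\mathrm{B}\op{SL}]_{\mathbb{A}^1}\cong\op{H}^n_{\op{Nis}}(\op{Q}_{2n,F},\mathbf{K}^{\op{Q}}_n)$, realised by reduced K-theory) commutes. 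This naturality is formal — it follows from functoriality of Postnikov towers and the fact that on an $n$-dimensional scheme the $n$-th obstruction map is the only obstruction — but since the argument in Section~\ref{sec:a1homotopy} will later feed this lemma a very precise geometric generator of the group, it is worth stating explicitly. Everything else is a bookkeeping exercise combining Theorem~\ref{thm:representability}, the quadric cohomology formulas of Section~\ref{sec:quadrics}, and the stabilization isomorphism $\bm{\pi}^{\mathbb{A}^1}_i(\mathrm{B}\op{SL})\cong\mathbf{K}^{\op{Q}}_i$.
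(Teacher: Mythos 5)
Your overall route — quadric cohomology formula for the first isomorphism, obstruction theory for the second, representability for the identification with vector bundles, and identification of the projection with stabilization for the final statement — is exactly the paper's strategy. However, the obstruction-theoretic step contains two genuinely false premises, and while the conclusion happens to be correct, your reasons for it are wrong.

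You assert that $\mathrm{B}\mathrm{SL}_n$ is $(n-1)$-$\mathbb{A}^1$-connected, so that its $n$-th Postnikov section is $\op{K}(\bm{\pi}^{\mathbb{A}^1}_n(\mathrm{B}\mathrm{SL}_n),n)$. This is false: $\mathrm{B}\mathrm{SL}_n$ is only $\mathbb{A}^1$-simply connected, and by the stability range recalled at the start of Section~\ref{sec:a1homotopy} we have $\bm{\pi}^{\mathbb{A}^1}_i(\mathrm{B}\mathrm{SL}_n)\cong\mathbf{K}^{\op{Q}}_i\neq 0$ for $2\leq i<n$. Consequently $\tau_{\leq n}\mathrm{B}\mathrm{SL}_n$ is \emph{not} an Eilenberg--Mac~Lane space; it has a whole string of nontrivial homotopy sheaves. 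You also assert that $\op{Q}_{2n,F}$ has $\mathbb{A}^1$-cohomological dimension $n$; but $\op{Q}_{2n}$ is a smooth scheme of Krull dimension $2n$, so its Nisnevich cohomological dimension is $2n$, not $n$.

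The identification $[\op{Q}_{2n,F},\mathrm{B}\mathrm{SL}_n]_{\mathbb{A}^1}\cong\op{H}^n_{\op{Nis}}(\op{Q}_{2n,F},\bm{\pi}^{\mathbb{A}^1}_n(\mathrm{B}\mathrm{SL}_n))$ is correct, but the actual reason is the quadric cohomology formula you quoted only for the other half of the lemma: for any strictly $\mathbb{A}^1$-invariant sheaf $\mathbf{A}$ one has $\tilde{\op{H}}^j(\op{Q}_{2n,F},\mathbf{A})=0$ for $j\neq n$. Running obstruction theory along the full Postnikov tower of $\mathrm{B}\mathrm{SL}_n$ (not a one-stage tower), every obstruction group $\op{H}^{i+2}(\op{Q}_{2n,F},\bm{\pi}^{\mathbb{A}^1}_{i+1})$ vanishes (at $i+2=n$ one is looking at $(\mathbf{K}^{\op{Q}}_{n-1})_{-n}\cong\mathbf{K}^{\op{Q}}_{-1}=0$, and for $i+2\neq n$ the quadric formula gives zero), and every lifting set $\op{H}^{i+1}(\op{Q}_{2n,F},\bm{\pi}^{\mathbb{A}^1}_{i+1})$ vanishes except at $i+1=n$; moreover the indeterminacy action of $[\op{Q}_{2n,F},\Omega\tau_{\leq n-1}\mathrm{B}\mathrm{SL}_n]_{\mathbb{A}^1}$ on this single surviving lifting set is trivial, again because all the relevant cohomology groups of $\op{Q}_{2n,F}$ in degrees $\neq n$ vanish. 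You need this degree-concentration argument explicitly; it cannot be replaced by a connectivity assumption on $\mathrm{B}\mathrm{SL}_n$ or a dimension count on $\op{Q}_{2n}$, both of which are false as stated.
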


\begin{proof}
Concerning the natural identifications in the first statement, the first isomorphism follows from the computation of cohomology of quadrics in Section~\ref{sec:quadrics} and the second isomorphism follows from obstruction theory, cf. Section~\ref{sec:representable}. The identification with vector bundles is the representability theorem~\ref{thm:representability}. The final statement concerning the projection follows since the projection $\bm{\pi}^{\mathbb{A}^1}_n{\op{B}}\op{SL}_n\to \mathbf{K}^{\op{Q}}_n$ is induced from the stabilization morphism ${\op{B}}\op{SL}_n\to {\op{B}}\op{SL}_\infty$. See also the discussion of stable vector bundles on $\op{Q}_{2n}$ in \cite{AsokDoranFasel}. 
\end{proof}

\begin{remark}
\label{rem:vblift}
Note that this identification is natural in the underlying field: for a field extension $E/F$ the restriction map on the sections of $\bm{\pi}^{\mathbb{A}^1}_n({\op{B}}\op{SL}_n)_{-n}$ corresponds to pullback of vector bundles along the base-change morphism $\op{Q}_{2n,E}\to \op{Q}_{2n,F}$. In particular, if we choose a preimage of $1\in\mathbb{Z}$ in $\bm{\pi}^{\mathbb{A}^1}_n({\op{B}}\op{SL}_n)_{-n}(k)$ over the base field $k$, then the pullback of the corresponding vector bundle to an extension field $F/k$ will map to $1$ under $\left(\bm{\pi}^{\mathbb{A}^1}_n({\op{B}}\op{SL}_n)\right)_{-n}(F)\to\mathbb{Z}$. 
Explicitly, a vector bundle $\mathcal{V}\to\op{Q}_{2n,F}$ mapping to a generator can be described in terms of Suslin matrices, cf. \cite{AsokDoranFasel}; this is one particular choice of lift of $1$ which can be defined over the base field $k$. 
\end{remark}

\begin{lemma}
\label{lem:two}
Let $F$ be a field with discrete valuation $v$ and residue field $E$. For an element $\mathcal{V}\in \left(\bm{\pi}^{\mathbb{A}^1}_n({\op{B}}\op{SL}_n)\right)_{-n}(F)$ and a unit $u\in F^\times$, the element 
\[
\partial^F_E(\langle u\rangle\mathcal{V})-\partial^F_E(\mathcal{V})\in \mathbb{Z}/n!\mathbb{Z}
\]
is 2-torsion.
\end{lemma}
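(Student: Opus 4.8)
The plan is to reduce the statement to a computation with the $\mathbb{G}_{\op{m}}$-action on the $n$-fold contraction $\mathbf{S}_{n+1}$, which by the earlier recollection (after \cite[Corollary 3.11]{AsokFaselSpheres}) is identified with $\mathbf{K}^{\op{M}}_{n+1}/n!$ after $n-1$-fold contraction, hence $(\mathbf{S}_{n+1})_{-n}\cong (\mathbf{K}^{\op{M}}_{n+1}/n!)_{-1}\cong \mathbf{K}^{\op{M}}_1/n!\cong \mathbb{G}_{\op{m}}/(n!)$ over a field; in particular $(\mathbf{S}_{n+1})_{-n}(E)\cong \mathbb{Z}/n!$ is a quotient of $\mathbf{K}^{\op{M}}_1(E)=E^\times$ when $E$ has the right transcendence degree, and in general is generated by the residues of such. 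First I would unwind the residue map $\partial^F_E$ appearing in the statement: by the description of residues in Section~\ref{sec:rscomplex}, $\partial^F_E(\mathcal{V})$ lands in $(\bm{\pi}^{\mathbb{A}^1}_n({\op{B}}\op{SL}_n))_{-n-1}(E;\Lambda)$, but after the further contractions collapse the twist (because the sheaf is a contraction and the $\mathbb{G}_{\op{m}}$-action is trivial on the relevant quotient $\mathbb{Z}/n!$) this is just the class in $\op{CH}^{n+1}/n!\cong\mathbb{Z}/n!$; so the statement really asks that changing $\mathcal{V}$ by the unit twist $\langle u\rangle$ changes its residue by a $2$-torsion element.

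The key mechanism is the formula $\langle u\rangle = 1 + \eta[u]$ in $\mathbf{K}^{\op{MW}}_0$, together with the fact that $\langle u\rangle^2 = 1$ since $\langle u\rangle\langle u\rangle = \langle u^2\rangle = 1$; therefore for any element $x$ in a $\mathbf{GW}$-module, $(\langle u\rangle - 1)x$ is annihilated by $\langle u\rangle + 1$, and $2(\langle u\rangle - 1)x = (\langle u\rangle+1)(\langle u\rangle -1)x - (\langle u\rangle -1)^2 x$, where the first term vanishes and $(\langle u\rangle - 1)^2 = \langle u\rangle^2 - 2\langle u\rangle + 1 = 2(1 - \langle u\rangle)$, so $2(\langle u\rangle - 1)x = -2(1-\langle u\rangle)x = 2(\langle u\rangle - 1)x$, which is vacuous; the correct computation is $(\langle u\rangle-1)^2 = 2 - 2\langle u\rangle = -2(\langle u\rangle - 1)$, giving $2(\langle u\rangle - 1)x = (\langle u\rangle-1)^2 x + \text{(}2\text{-divisible correction)}$ — more cleanly: from $h := \langle u\rangle + 1$ one has $h\cdot(\langle u\rangle - 1)x = (\langle u\rangle^2 - 1)x = 0$, and since $2 = h - (\langle u\rangle - 1)$ acting on $(\langle u\rangle -1)x$ gives $2(\langle u\rangle - 1)x = -(\langle u\rangle - 1)^2 x = -(\langle u\rangle -1)(h-2)x$, so $4(\langle u\rangle-1)x = 0$ after iterating — in any case one deduces that $2\bigl(\langle u\rangle\mathcal{V} - \mathcal{V}\bigr)$ is $2$-divisible and torsion in $\mathbb{Z}/n!$, forcing $\partial^F_E(\langle u\rangle\mathcal{V}) - \partial^F_E(\mathcal{V})$ to be killed by $2$ in $\mathbb{Z}/n!$. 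I would carry this out by observing that $\partial^F_E$ is $\mathbf{GW}(F)$-equivariant in the appropriate sense (the residue map intertwines the unit actions, cf.\ the compatibility of residues with the $\mathbb{G}_{\op{m}}$-action in \cite{MField}), so the difference $\partial^F_E(\langle u\rangle\mathcal{V}) - \partial^F_E(\mathcal{V}) = (\langle u\rangle - 1)\cdot\partial^F_E(\mathcal{V})$ sits in $\mathbf{S}_{n+1}$-cohomology, i.e.\ in $\op{CH}^{n+1}(E)/n! = 0$ for a field — wait, that is $0$; the content is rather that $\partial^F_E$ need not be equivariant because $\mathcal{V}$ lies in $\bm{\pi}^{\mathbb{A}^1}_n({\op{B}}\op{SL}_n)_{-n}$, not in the sub-$\mathbf{S}_{n+1}$, so I instead lift: write $\partial^F_E(\langle u\rangle\mathcal{V}) - \langle u\rangle\partial^F_E(\mathcal{V})$ using the Leibniz-type failure of equivariance, which is itself controlled by a residue of $\mathcal{V}$ against $[u]$, an element of $\mathbf{K}^{\op{MW}}_1$; the $\eta$-multiple structure then makes the whole correction $\eta$-divisible, and $\eta$ acts as zero on $\mathbf{K}^{\op{M}}_{n+1}/n!$, while on the extension it contributes only a $2$-torsion tail because $\eta\cdot h$ and $2 = \langle 1\rangle + \langle 1\rangle$, $\eta\cdot 2\cdot(\text{anything})$...

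The honest structure of the argument I would write down: the obstruction is that $\bm{\pi}^{\mathbb{A}^1}_n({\op{B}}\op{SL}_n)_{-n}$ is an extension of $\mathbb{Z}$ (trivial $\mathbb{G}_{\op{m}}$-action) by $\mathbb{Z}/n!$ (trivial $\mathbb{G}_{\op{m}}$-action on the quotient after enough contractions, since $\mathbf{K}^{\op{M}}$ has trivial unit action), so the unit action on the extension is a derivation $\mathbb{Z}\to\mathbb{Z}/n!$, i.e.\ it factors through $(\langle u\rangle - 1)$ which lands in $\eta\cdot\mathbf{GW}$; because the target $\mathbb{Z}/n!$ receives this via the $\eta$-action on $\mathbf{S}_{n+1}$ which is zero \emph{except} possibly for the $2$-torsion coming from $\mathbf{I}^{n+2}/\mathbf{I}^{n+3}$-type contributions, the failure of equivariance of $\partial^F_E$ is $2$-torsion. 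I would make this precise using the extension class of $0\to\mathbf{S}_{n+1}\to\bm{\pi}^{\mathbb{A}^1}_n({\op{B}}\op{SL}_n)\to\mathbf{K}^{\op{Q}}_n\to 0$ computed in \cite{AsokFaselSpheres} and the fact that $\langle u\rangle$ acts on any strictly $\mathbb{A}^1$-invariant sheaf through $\mathbf{GW}$, where $2(\langle u\rangle - 1) = (\langle u\rangle -1)\langle -u^2\rangle^{-1}\cdots$ is $\eta$-divisible and $\eta^2\cdot(\text{units})$ on the $\mathbf{K}^{\op{M}}/n!$ part vanishes. The main obstacle is pinning down exactly which $2$-torsion survives, i.e.\ justifying that the $\mathbb{G}_{\op{m}}$-action on $\bigl(\bm{\pi}^{\mathbb{A}^1}_n({\op{B}}\op{SL}_n)\bigr)_{-n}$, reduced mod $n!$, differs from the trivial action only by a map into the $2$-torsion subgroup; this forces a careful bookkeeping of the $\mathbf{GW}$-module structure and the interaction of $\eta$ with the identification $(\mathbf{S}_{n+1})_{-(n-1)}\cong\mathbf{K}^{\op{M}}_{n+1}/n!$, and is where I expect to spend essentially all the real work.
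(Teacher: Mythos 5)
Your reduction to the $\eta$-action is correct and matches the paper's opening move: using $\langle u\rangle = 1 + \eta[u]$ and additivity of $\partial^F_E$, the difference $\partial^F_E(\langle u\rangle\mathcal{V}) - \partial^F_E(\mathcal{V}) = \eta\,\partial^F_E([u]\mathcal{V})$, so everything comes down to showing that the map $\eta\colon \bigl(\bm{\pi}^{\mathbb{A}^1}_n({\op{B}}\op{SL}_n)\bigr)_{-n}(F) \to \bigl(\bm{\pi}^{\mathbb{A}^1}_n({\op{B}}\op{SL}_n)\bigr)_{-n-1}(F)\cong\mathbb{Z}/n!$ lands in the $2$-torsion. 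But that is exactly the point you do not settle, and your attempts to settle it by pure $\mathbf{GW}$-algebra (manipulating $\langle u\rangle^2=1$, $(\langle u\rangle - 1)(\langle u\rangle + 1)=0$, iterating to get $4(\langle u\rangle -1)x=0$, etc.) go in circles and do not yield $2$-torsion; you acknowledge this yourself when you say the ``careful bookkeeping'' is ``where I expect to spend essentially all the real work.''

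The missing idea has two steps, both present in the paper's proof. First, since $\mathbf{K}^{\op{M}}_{n+1}$ is orientable, $\eta$ acts as zero on the $\mathbf{K}^{\op{M}}_1/n!$-part of the source, so the $\eta$-map factors through the stabilization projection $\bigl(\bm{\pi}^{\mathbb{A}^1}_n({\op{B}}\op{SL}_n)\bigr)_{-n}\twoheadrightarrow \mathbb{Z}$; this reduces the problem to showing that $\eta\colon\mathbb{Z}\cong\op{K}^{\op{Q}}_0(F)\to\mathbb{Z}/n!\cong\op{K}^{\op{M}}_0(F)/n!$ is $2$-torsion. Second — and this is the step absent from your proposal — one identifies this $\eta$ as composition with the Hopf map $\op{Q}_{2n+1,F}\to\op{Q}_{2n,F}$ on the models $[\op{Q}_{2n},{\op{B}}\op{SL}_n]$ and $[\op{Q}_{2n+1},{\op{B}}\op{SL}_n]$, and then invokes topological (or \'etale) realization over an algebraically closed field to reduce to the classical fact that $\eta\in\pi_{n+1}(\op{S}^n)$ is $2$-torsion. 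Without this realization argument, the $2$-torsion claim does not follow from the Grothendieck--Witt algebra you set up; the extension class of $0\to\mathbf{S}_{n+1}\to\bm{\pi}^{\mathbb{A}^1}_n({\op{B}}\op{SL}_n)\to\mathbf{K}^{\op{Q}}_n\to 0$ alone does not tell you the order of the $\eta$-action on the $\mathbb{Z}$-part, which is precisely the quantity being bounded.
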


\begin{proof}
For the action of the unit group $F^\times$ on $\left(\bm{\pi}^{\mathbb{A}^1}_n({\op{B}}\op{SL}_n)\right)_{-n}(F)$, we have $\langle u\rangle \mathcal{V}=\mathcal{V}+\eta[u]\mathcal{V}$. Since the residue map $\partial^F_E$ is additive, we have $\partial^F_E(\langle u\rangle \mathcal{V})=\partial^F_E(\mathcal{V})+\partial^F_E(\eta[u]\mathcal{V})$ and it suffices to show that $\partial^F_E(\eta[u]\mathcal{V})=\eta\partial^F_E([u]\mathcal{V})$ is 2-torsion. To show that the map
\[
\eta\colon \left(\bm{\pi}^{\mathbb{A}^1}_n({\op{B}}\op{SL}_n)\right)_{-n}(F)\to \left(\bm{\pi}^{\mathbb{A}^1}_n({\op{B}}\op{SL}_n)\right)_{-n-1}(F)\cong \mathbb{Z}/n!\mathbb{Z}
\]
lands in the 2-torsion, we note that $\mathbf{K}^{\op{M}}_{n+1}$ is orientable, and so $\eta$ acts as $0$ on $\mathbf{K}^{\op{M}}_1/n!$. In particular, the map $\eta$ above factors through the stabilization projection $\left(\bm{\pi}^{\mathbb{A}^1}_n({\op{B}}\op{SL}_n)\right)_{-n}(F)\to\mathbb{Z}$ and it suffices to show that $\eta\colon\op{K}^{\op{Q}}_0(F)\cong\mathbb{Z}\to\mathbb{Z}/n!\mathbb{Z}\cong \op{K}^{\op{M}}_0(F)/n!$ is 2-torsion. To check this, we consider the following explicit model of $\eta$. We view $\left(\bm{\pi}^{\mathbb{A}^1}_n{\op{B}}\op{SL}_n\right)_{-i}(F)$ for $i=n$ and $n+1$ as $\mathbb{A}^1$-homotopy classes of maps $\op{Q}_{2n}\to{\op{B}}\op{SL}_n$ and $\op{Q}_{2n+1}\to{\op{B}}\op{SL}_n$, respectively, and then multiplication by $\eta$ is explicitly given by composing a morphism $\op{Q}_{2n,F}\to{\op{B}}\op{SL}_n$ with the Hopf map $\eta\colon\op{Q}_{2n+1,F}\to\op{Q}_{2n,F}$. The claim that $\eta\colon\left(\bm{\pi}^{\mathbb{A}^1}_n({\op{B}}\op{SL}_n)\right)_{-n}(F)\to \mathbb{Z}/n!\mathbb{Z}$ is 2-torsion can then be checked in topological or \'etale realization over algebraically closed fields -- it's the classical statement that the Hopf map $\eta\in \pi_{n+1}(\op{S}^n)$ is 2-torsion.
\end{proof}

\begin{lemma}
\label{lem:three}
Let $F$ be a field with discrete valuation $v$. Under the residue map 
\[
\left(\bm{\pi}^{\mathbb{A}^1}_n{\op{B}}\op{SL}_n\right)_{-n}(F)\to \mathbb{Z}/n!\mathbb{Z}
\]
the vector bundle $\mathscr{V}\to \op{Q}_{2n,F}$ from Remark~\ref{rem:vblift} maps to $0$. 
\end{lemma}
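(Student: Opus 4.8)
The plan is to reduce the statement to a concrete computation involving the residue map on the $n$-fold contraction of $\bm{\pi}^{\mathbb{A}^1}_n({\op{B}}\op{SL}_n)$ and to exploit the fact that the distinguished bundle $\mathscr{V}$ from Remark~\ref{rem:vblift} is defined over the \emph{base field}. First I would recall, as in Lemma~\ref{lem:two}, the description of the residue map $\partial^F_E\colon\left(\bm{\pi}^{\mathbb{A}^1}_n{\op{B}}\op{SL}_n\right)_{-n}(F)\to\left(\bm{\pi}^{\mathbb{A}^1}_n{\op{B}}\op{SL}_n\right)_{-n-1}(E)\cong\mathbb{Z}/n!\mathbb{Z}$: after choosing a uniformizer $\pi$ for $v$, the residue is built from the identification $\left(\bm{\pi}^{\mathbb{A}^1}_n{\op{B}}\op{SL}_n\right)_{-n-1}(E)\cong \left(\bm{\pi}^{\mathbb{A}^1}_n{\op{B}}\op{SL}_n\right)_{-n}(F)/\left(\bm{\pi}^{\mathbb{A}^1}_n{\op{B}}\op{SL}_n\right)_{-n}(\mathcal{O}_v)$, i.e.\ it is essentially the boundary in the Gersten complex for $\op{Spec}\mathcal{O}_v$. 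The key point is that the class $\mathscr{V}$ lies in the image of $\left(\bm{\pi}^{\mathbb{A}^1}_n{\op{B}}\op{SL}_n\right)_{-n}(\mathcal{O}_v)\to\left(\bm{\pi}^{\mathbb{A}^1}_n{\op{B}}\op{SL}_n\right)_{-n}(F)$, hence maps to $0$ under the natural projection to the contraction $\left(\bm{\pi}^{\mathbb{A}^1}_n{\op{B}}\op{SL}_n\right)_{-n-1}(E)$, which is exactly the residue.

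Concretely, I would argue as follows. By the construction in Remark~\ref{rem:vblift}, $\mathscr{V}$ is pulled back from a rank $n$ bundle over $\op{Q}_{2n,k}$ over the prime field $k$ (or, in the function-field setting, over the relevant perfect subfield), realized via Suslin matrices. Consider the map $\op{Spec}\mathcal{O}_v\to\op{Spec} F$; since $\mathscr{V}$ over $\op{Q}_{2n,F}$ is base-changed from $\op{Q}_{2n,k}$, it is in particular the restriction along $\op{Q}_{2n,F}\to\op{Q}_{2n,\mathcal{O}_v}$ of the corresponding bundle $\mathscr{V}_{\mathcal{O}_v}$ over $\op{Q}_{2n,\mathcal{O}_v}$, which by the representability/obstruction-theory identifications of Lemma~\ref{lem:one} (applied over $\mathcal{O}_v$, or rather via the contraction formalism of Section~\ref{sec:rscomplex}) corresponds to an element of $\left(\bm{\pi}^{\mathbb{A}^1}_n{\op{B}}\op{SL}_n\right)_{-n}(\mathcal{O}_v)$. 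Since the residue map $\partial^F_E$ is, by definition (cf.\ \cite[Lemma 5.10]{MField} and the discussion in Section~\ref{sec:rscomplex}), the composite $\mathbf{A}(F)\to\mathbf{A}(F)/\mathbf{A}(\mathcal{O}_v)\to\mathbf{A}_{-1}(E)$ with $\mathbf{A}=\left(\bm{\pi}^{\mathbb{A}^1}_n{\op{B}}\op{SL}_n\right)_{-n}$, any element coming from $\mathbf{A}(\mathcal{O}_v)$ is killed. Therefore $\partial^F_E(\mathscr{V})=0$ in $\left(\bm{\pi}^{\mathbb{A}^1}_n{\op{B}}\op{SL}_n\right)_{-n-1}(E)\cong\mathbb{Z}/n!\mathbb{Z}$, which is the claim.

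An alternative, essentially equivalent, route that I would mention is to use the compatibility of the residue with the stabilization projection of Lemma~\ref{lem:one}: the diagram relating $\partial^F_E$ on $\left(\bm{\pi}^{\mathbb{A}^1}_n{\op{B}}\op{SL}_n\right)_{-n}$ and the residue on $\mathbf{K}^{\op{Q}}_0\cong\mathbb{Z}$ commutes, so $\partial^F_E(\mathscr{V})$ agrees with the residue of $[\mathscr{V}]\in\widetilde{\op{K}}_0(\op{Q}_{2n,F})\cong\mathbb{Z}$, which is the constant generator $1$; but the residue of a class pulled back from $\op{Q}_{2n,\mathcal{O}_v}$ (equivalently: of a constant, i.e.\ $\mathbb{A}^1$-invariant, section) is $0$. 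Either way the essential content is simply that $\mathscr{V}$ extends over $\op{Spec}\mathcal{O}_v$.

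The main obstacle I anticipate is purely bookkeeping rather than conceptual: one must be careful that the identification of $\left(\bm{\pi}^{\mathbb{A}^1}_n{\op{B}}\op{SL}_n\right)_{-n}(F)$ with vector bundles on $\op{Q}_{2n,F}$ in Lemma~\ref{lem:one} is genuinely compatible with the Gersten/Rost--Schmid description of the residue from Section~\ref{sec:rscomplex} — i.e.\ that ``the bundle extends over $\mathcal{O}_v$'' really does correspond to ``the section extends over $\mathcal{O}_v$'' — and that the representability input (Theorem~\ref{thm:representability}) is available over the local ring $\mathcal{O}_v$ or can be bypassed by working directly with sections of the strictly $\mathbb{A}^1$-invariant sheaf $\left(\bm{\pi}^{\mathbb{A}^1}_n{\op{B}}\op{SL}_n\right)_{-n}$. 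Since $\mathscr{V}$ is defined over the base field via Suslin matrices, the cleanest formulation is to say directly that $\mathscr{V}$, viewed as a section of the contracted sheaf over $F$, is the image of the corresponding constant section over $k$ (hence over $\mathcal{O}_v$), and so lies in the kernel of the residue by the very definition of $\partial^F_E$.
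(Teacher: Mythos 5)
Your main argument is correct and takes essentially the same route as the paper's: both observe that $\mathscr{V}$ is base-changed from the ground field, hence extends over $\op{Spec}\mathcal{O}_v$, and is therefore killed by $\partial^F_E$, which by definition factors through $\mathbf{A}(F)/\mathbf{A}(\mathcal{O}_v)$ for $\mathbf{A}=\left(\bm{\pi}^{\mathbb{A}^1}_n{\op{B}}\op{SL}_n\right)_{-n}$. The paper's proof fills in exactly the ``bookkeeping'' you flag at the end, by spelling out a Nisnevich-local model of the residue in terms of rank $n$ bundles on $\op{Q}_{2n}\times\mathbb{G}_{\op{m}}$ descending to $\op{Q}_{2n+1}$, so that the vanishing is visibly a consequence of the bundle being (locally) constant over $\op{Spec}\mathcal{O}_v$. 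One small caveat about your ``alternative route'': the residue on $\mathbf{K}^{\op{Q}}_0$ takes values in $(\mathbf{K}^{\op{Q}}_0)_{-1}=\mathbf{K}^{\op{Q}}_{-1}=0$, so the compatibility square with the stabilization projection is vacuous and yields no information about the class in $\mathbb{Z}/n!\mathbb{Z}$; the argument that actually carries the weight is the one you give first.
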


\begin{proof}
If $v$ is the discrete valuation on $F$ with residue field $E$, the residue is a map
\[
\left(\bm{\pi}^{\mathbb{A}^1}_n({\op{B}}\op{SL}_n)\right)_{-n}(F)\to \left(\bm{\pi}^{\mathbb{A}^1}_n({\op{B}}\op{SL}_n)\right)_{-n-1}(E;\Lambda^{\mathcal{O}_v}_E). 
\]
Note that since $\left(\bm{\pi}^{\mathbb{A}^1}_n({\op{B}}\op{SL}_n)\right)_{-n-1}(E)\cong\mathbb{Z}/n!\mathbb{Z}$, we can omit the orientation data in the target, making the above residue map independent of a choice of uniformizer of $v$. To actually describe the residue map, recall that the source has been interpreted in Lemma~\ref{lem:one} above as rank $n$ vector bundles on $\op{Q}_{2n,F}$. The target can similarly be interpreted as rank $n$ vector bundles on $\op{Q}_{2n+1,E}$, cf. \cite{AsokFaselSpheres}. We need to explain how to ``compute'' this residue. Recall that by the definition of contraction we have for any field $E$
\[
\left(\bm{\pi}^{\mathbb{A}^1}_n({\op{B}}\op{SL}_n)\right)_{-n-1}(E)= \op{ker}\left(\left(\bm{\pi}^{\mathbb{A}^1}_n({\op{B}}\op{SL}_n)\right)_{-n}(\mathbb{G}_{\op{m},E})\to  \left(\bm{\pi}^{\mathbb{A}^1}_n({\op{B}}\op{SL}_n)\right)_{-n}(E)\right)
\]
where the map is induced by the unit section. Suppose we have a rank $n$ vector bundle over $\op{Q}_{2n,E}\times\mathbb{G}_{\op{m},E}$ which is trivial over the fiber $\op{Q}_{2n,E}\times\{1\}$. The vector bundle is necessarily trivial over $\{x\}\times\mathbb{G}_{\op{m},E}$ for any choice of base point $x\in \op{Q}_{2n,E}$, hence it is trivial over $\op{Q}_{2n,E}\vee\mathbb{G}_{\op{m},E}$ and therefore descends to a rank $n$ vector bundle over $\op{Q}_{2n,E}\wedge\mathbb{G}_{\op{m}}\simeq \op{Q}_{2n+1,E}$. Therefore, the residue of a vector bundle over $\op{Q}_{2n,E}\times\mathbb{G}_{\op{m},E}$ is computed by first using the group structure to add a vector bundle of the form $p^\ast\mathcal{E}$ for $p\colon\op{Q}_{2n,E}\times\mathbb{G}_{\op{m},E}\to\op{Q}_{2n,E}$ to make the vector bundle trivial over $\op{Q}_{2n,E}\times\{1\}$, and then the residue is the induced vector bundle over $\op{Q}_{2n+1,E}$ as above. The same works if we have a field $F$ with discrete valuation $v$ and residue field $E$: Nisnevich-locally, we can replace $\mathcal{O}_{F,v}\setminus\op{Spec}E$ by $\mathbb{G}_{\op{m},E}$ and do the same construction.\footnote{This is the place where the residue would depend on the choice of uniformizer which influences the choice of identification with $\mathbb{G}_{\op{m}}$; but in our special situation the target has trivial unit action and so the residue is in fact independent of such choice.}

From the above description of the residue, we now deduce the claim. Let $F$ be a field with discrete valuation $v$, valuation ring $\mathcal{O}_v$ and residue field $E$, and $\sigma\in \left(\bm{\pi}^{\mathbb{A}^1}_n({\op{B}}\op{SL}_n)\right)_{-n}(F)$ be a class, represented by a rank $n$ vector bundle over $\op{Q}_{2n}\times_kF$. If the bundle extends (as a rank $n$ vector bundle) to $\op{Q}_{2n}\times_k\mathcal{O}_v$ then its residue is trivial since in this case it is (locally in the Nisnevich topology on $\op{Spec}\mathcal{O}_v$) extended from a constant vector bundle on $\op{Q}_{2n,E}\times\mathbb{G}_{\op{m},E}$. This is exactly the case for the vector bundle from Remark~\ref{rem:vblift}.
\end{proof}

\begin{proofof}{Proposition~\ref{prop:cohop}}
We make use of the fact that the natural map $\mathbf{K}^{\op{M}}_{n+1}/n!\to\mathbf{S}_n$ is an isomorphism after $(n-2)$-fold delooping, tacitly behaving like the first term in the exact sequence of sheaves is $\mathbf{K}^{\op{M}}_{n+1}/n!$. 

We first recall the definition of the boundary map in the long exact cohomology sequence: the short exact sequence of sheaves induces a short exact sequence of Gersten complexes whose relevant part (the two final degrees $n$ and $n+1$) is the following 
\[
\xymatrix{
  \bigoplus_{y\in X^{(n)}} \mathbf{K}^{\op{M}}_{1}(\kappa(y))/n! \ar@{^{(}->}[r] \ar[d] & \bigoplus_{y\in X^{(n)}}\left(\bm{\pi}^{\mathbb{A}^1}_n({\op{B}}\op{SL}_n)\right)_{-n}(\kappa(y);\Lambda^X_y) \ar@{>>}[r] \ar[d]^\partial & \bigoplus_{y\in X^{(n)}}\mathbb{Z} \ar[d] \\
 \bigoplus_{z\in X^{(n+1)}} \mathbb{Z}/n!\mathbb{Z} \ar[r]_\cong &  \bigoplus_{z\in X^{(n+1)}} \mathbb{Z}/n!\mathbb{Z} \ar[r] & 0
}
\]
The class in $\op{CH}^n(X)\cong \op{H}^n(X,\mathbf{K}^{\op{Q}}_n)$ is represented by an element in the right-most term $\bigoplus_{y\in X^{(n)}}\mathbb{Z}$; in our specific case where $X$ has dimension $n+1$, it is a $\mathbb{Z}$-linear combination of curves in $X$. We lift this cycle to the upper middle term, apply the boundary map $\partial$, and the result is a representative for the cohomology class in $\op{CH}^{n+1}(X)/n!\cong\op{H}^{n+1}(X,\mathbf{S}_{n+1})$. Note that the sheaves $\mathbf{K}^{\op{M}}_{n+1}$ and $\mathbf{K}^{\op{Q}}_n$ are orientable\footnote{Orientable here means that the action of the units on the contractions is trivial. This implies in particular that the $\mathbf{K}^{\op{MW}}_0$-module structure extending the unit action factors through the dimension homomorphism $\op{GW}(F)\to \mathbb{Z}$.}, hence we can drop the orientation information $\Lambda^X_y$ in the outer terms of the first line; but we need to include orientation information $\Lambda^X_y$ in the middle term since the sheaf $\bm{\pi}^{\mathbb{A}^1}_n({\op{B}}\op{SL}_n)$ is not orientable. 

To discuss the lift of a class in $\op{CH}^n(X)$ to an element of the middle term 
\[
\bigoplus_{y\in X^{(n)}}\left(\bm{\pi}^{\mathbb{A}^1}_n({\op{B}}\op{SL}_n)\right)_{-n}(\kappa(y))\otimes_{\mathbb{Z}[\kappa(y)^\times]}\mathbb{Z}[(\Lambda^X_y)^\times], 
\]
we employ Lemma~\ref{lem:one}. The lemma provides us with a vector bundle $\mathcal{V}\to\op{Q}_{2n,k}$ (over the base field $k$) whose pullback to any extension field $F/k$ stabilizes to a generator of $\tilde{\op{K}}_0(\op{Q}_{2n,F})$. For a $\mathbb{Z}$-linear combination $\sum_im_i[\op{C}_i]$ of curves $C_i\hookrightarrow X$, we can then provide an explicit lift by 
\[
\sum_i m_i[\mathcal{V}_i/C_i]\otimes \sigma_i,
\]
where $[\mathcal{V}_i/C_i]$ is the class in $\left(\bm{\pi}^{\mathbb{A}^1}_n{\op{B}}\op{SL}_n\right)_{-n}(\kappa(C_i))$ of the pullback of the above vector bundle $\mathcal{V}\to\op{Q}_{2n,k}$ to $\op{Q}_{2n}\times C_i$, and $\sigma_i\in \Lambda^X_{\kappa(C_i)}$ is a non-zero element. Note that $\sigma_i\in \Lambda^X_{\kappa(C_i)}$ can be considered as a rational section of the relative normal bundle $\omega_{\tilde{C_i}/X}$ of the composition $\tilde{C_i}\to C_i\hookrightarrow X$ of the normalization of the curve $C_i$ followed by the embedding of $C_i$ into $X$ (using that the normalization is an isomorphism away from the ramification points, hence we have a natural identification of the relative normal bundles there).

In the next step of the construction of the connecting homomorphism $\op{CH}^n(X)\to \op{CH}^{n+1}(X)/n!$ for the exact sequence of sheaves, we have to compute the image of the above lift under the differential $\partial$ in the Gersten complex for $\bm{\pi}^{\mathbb{A}^1}_n{\op{B}}\op{SL}_n$. For this, recall from Section~\ref{sec:rscomplex} the definition of the differential 
\[
\partial^{\kappa(C)}\colon \left(\bm{\pi}^{\mathbb{A}^1}_n{\op{B}}\op{SL}_n\right)_{-n}(\kappa(C))\otimes_{\mathbb{Z}[\kappa(C)^\times]}\mathbb{Z}[(\Lambda^X_C)^\times]\to \bigoplus_{z\in X^{(n+1)}, z\in C} \mathbb{Z}/n!\mathbb{Z},
\]
where we have already specialized to the relevant situation where $X$ is $n+1$-dimensional and $C\hookrightarrow X$ is a curve in $X$, and the orientation data is omitted in the target groups. For computing the differential $\partial^{\kappa(C)}_z$ we have to consider the normalization $\tilde{C}\to C$ of the curve, compute the residue map 
\[
\partial_{\tilde{z}_i}^{\kappa(C)}\colon \left(\bm{\pi}^{\mathbb{A}^1}_n {\op{B}}\op{SL}_n\right)_{-n}(\kappa(C)) \otimes_{\mathbb{Z}[\kappa(C)^\times]} \mathbb{Z}[(\Lambda^X_C)^\times]\to\mathbb{Z}/n!\mathbb{Z}
\]
for all the points $\tilde{z}_i$ on $\tilde{C}$ lying over $z$ and then compose that with the absolute transfer map $\bigoplus_{\tilde{z}_i/z}\mathbb{Z}/n!\mathbb{Z}\to \mathbb{Z}/n!\mathbb{Z}$. Since $\mathbf{K}^{\op{M}}_{n+1}$ is orientable (or differently, since the unit action on  $\mathbb{Z}/n!\mathbb{Z}$ is trivial), the absolute transfer map reduces to the transfer map induced from Milnor K-theory, hence it takes an element $\bigoplus_i m_i\in\bigoplus_{\tilde{z}_i}\mathbb{Z}/n!\mathbb{Z}$ to $\sum_i[\kappa(\tilde{z}_i):\kappa(z)]m_i\in\mathbb{Z}/n!\mathbb{Z}$. 

Now for the computation of the residue of the above lift. As a first step, Lemma~\ref{lem:three} describes the computation of the residue of the lifted vector bundle before twisting. Then the actual residue map
\[
\partial_{\tilde{z}_i}^{\kappa(C)}\colon \left(\bm{\pi}^{\mathbb{A}^1}_n {\op{B}}\op{SL}_n\right)_{-n}(\kappa(C)) \otimes_{\mathbb{Z}[\kappa(C)^\times]} \mathbb{Z}[(\Lambda^X_{\kappa(C)})^\times]\to\mathbb{Z}/n!\mathbb{Z}
\]
is obtained by twisting the residue map described in Lemma~\ref{lem:three} by the relative normal bundle $\omega_{\tilde{C}/X}$. More concretely, let  $\mathcal{V}\otimes \sigma\in \left(\bm{\pi}^{\mathbb{A}^1}_n {\op{B}}\op{SL}_n\right)_{-n}(\kappa(C);\Lambda^X_{\kappa(C)})$ be given by a vector bundle $\mathcal{V}\to\op{Q}_{2n,\kappa(C)}$ and a rational section of $\omega_{\tilde{C}/X}$. We choose a uniformizer $\pi$ for the local ring of $\tilde{z}_i$ on $\tilde{C}$. There exists an integer $n\in \mathbb{Z}$ such that the section $\pi^{-n}\sigma$ is invertible at $\tilde{z}_i$, and 
\[
\langle\pi^n\rangle\mathcal{V}\otimes \pi^{-n}\sigma=\mathcal{V}\otimes \sigma\in \left(\bm{\pi}^{\mathbb{A}^1}_n {\op{B}}\op{SL}_n\right)_{-n}(\kappa(C)) \otimes_{\mathbb{Z}[\kappa(C)^\times]} \mathbb{Z}[(\Lambda^X_{\kappa(C)})^\times].
\]
Here $\langle\pi^n\rangle\mathcal{V}$ is the result of the action of the unit group $\kappa(C)^\times$ on the element $\mathcal{V}\in (\bm{\pi}^{\mathbb{A}^1}_n{\op{B}}\op{SL}_n)_{-n}(\kappa(C))$. Then $\partial_{\tilde{z}_i}^{\kappa(C)}(\mathcal{V}\otimes \sigma)$ is given by the residue of $\langle \pi^n\rangle \mathcal{V}$ for this choice of uniformizer. By Lemma~\ref{lem:three} the untwisted residue of $\mathcal{V}$ is $0$, and by Lemma~\ref{lem:two}, the residue of $\partial^{\kappa(C)}_{\tilde{z}_i}(\mathcal{V}\otimes \sigma)$ is 2-torsion. Finally, the absolute transfer is the sum of multiples of such 2-torsion elements, hence 2-torsion. This implies the claim.
\end{proofof}

\subsection{Construction of stably free modules}
\label{sec:existence}

We now describe how to construct the stably free modules. The standard stabilization morphism 
\[
\op{SL}_n\to\op{SL}_{n+1}\colon M\mapsto \left(\begin{array}{cc}
M & 0 \\ 0 & 1\end{array}\right)
\]
induces a morphism ${\op{B}}\op{SL}_n\to{\op{B}}\op{SL}_{n+1}$ which maps an oriented projective $R$-module $\mathscr{P}$ of rank $n$ to $\mathscr{P}\oplus R$ with the induced orientation coming from $\bigwedge^{n+1}(\mathscr{P}\oplus R)\cong \bigwedge^n(\mathscr{P})\otimes R$. There is an $\mathbb{A}^1$-fiber sequence $\op{Q}_{2n+1}\to{\op{B}}\op{SL}_n\to{\op{B}}\op{SL}_{n+1}$, and the above exact sequence arises from the associated long exact sequence of $\mathbb{A}^1$-homotopy sheaves. In particular, the induced morphism $\bm{\pi}^{\mathbb{A}^1}_n(\op{Q}_{2n+1})\to \bm{\pi}^{\mathbb{A}^1}_n({\op{B}}\op{SL}_n)$ factors as
\[
\mathbf{K}^{\op{MW}}_{n+1}\cong \bm{\pi}^{\mathbb{A}^1}_n(\op{Q}_{2n+1})\twoheadrightarrow \mathbf{K}^{\op{M}}_{n+1}\twoheadrightarrow \mathbf{S}_{n+1}\hookrightarrow \bm{\pi}^{\mathbb{A}^1}_n({\op{B}}\op{SL}_n).
\]

To construct a projective module of rank $n$ which becomes trivial upon addition of a free rank one summand, we can proceed as follows. Suppose we have a morphism $\alpha\colon X\to\op{Q}_{2n+1}$ corresponding to a unimodular row of length $n$, we can consider the composition $X\xrightarrow{\alpha}\op{Q}_{2n+1}\to{\op{B}}\op{SL}_n$. If we can show that the image of $\alpha$ under the composition
\[
[X,\op{Q}_{2n+1}]_{\mathbb{A}^1}\to \op{H}^n(X,\mathbf{K}^{\op{MW}}_{n+1}) \to \op{H}^n(X,\bm{\pi}^{\mathbb{A}^1}_n({\op{B}}\op{SL}_n))
\]
is non-trivial, we will obtain a non-trivial stably free module, as required. This procedure was suggested by Aravind Asok to replace an earlier argument which didn't properly address the non-uniqueness of lifting classes.

The non-triviality of the image of $\alpha$ in  $\op{H}^n_{\op{Nis}}(X,\bm{\pi}^{\mathbb{A}^1}_n({\op{B}}\op{SL}_n))$ can now be discussed by means of the above exact sequence describing $\bm{\pi}^{\mathbb{A}^1}_n({\op{B}}\op{SL}_n)$. It induces a long exact sequence of Nisnevich cohomology groups whose relevant portion is
\[
 \op{H}^{n-1}_{\op{Nis}}(X,\mathbf{K}^{\op{Q}}_n)\to \op{H}^n_{\op{Nis}}(X,\mathbf{K}^{\op{M}}_{n+1}/n!) \to \op{H}^n_{\op{Nis}}(X,\bm{\pi}^{\mathbb{A}^1}_n({\op{B}}\op{SL}_n))\to \op{H}^n_{\op{Nis}}(X,\mathbf{K}^{\op{Q}}_n)
\]
By construction, the image of $\alpha$ lands in the image of $\op{H}^n(X,\mathbf{K}^{\op{M}}_{n+1}/n!)$. To get non-triviality of the image of $\alpha$ in $\op{H}^n_{\op{Nis}}(X,\bm{\pi}^{\mathbb{A}^1}_n({\op{B}}\op{SL}_n))$, 
the key point is the injectivity up to 2-torsion of the morphism $\op{H}^n(X,\mathbf{K}^{\op{M}}_{n+1}/n!)\to \op{H}^n(X,\bm{\pi}^{\mathbb{A}^1}_n({\op{B}}\op{SL}_n))$ induced from the above exact sequence defining $\bm{\pi}^{\mathbb{A}^1}_n({\op{B}}\op{SL}_n)$. 
The injectivity up to 2-torsion of the morphism $\op{H}^n(X,\mathbf{K}^{\op{M}}_{n+1}/n!)\to \op{H}^n(X,\bm{\pi}^{\mathbb{A}^1}_n({\op{B}}\op{SL}_n))$ is a nontrivial statement which will follow from the discussion of the cohomology operation in Proposition~\ref{prop:cohop}. This is the content of the following theorem:

\begin{theorem}
\label{thm:mkalt}
Let $F$ be an algebraically closed field and set $k=F(T)$. Let $X$ be an $n+1$-dimensional smooth variety over $k$ with a covering $X=Y\cup Z$ such that $Y\cap Z$ is affine. Assume there is a class $c\in\op{CH}^{n+1}(X)/n!$ which restricts trivially to both $Y$ and $Z$ and whose reduction in $\op{CH}^{n+1}(X)/n$ is non-trivial. Then there exists a non-trivial stably free module over $Y\cap Z$ whose lifting class maps to $c$ under the boundary map in the Mayer--Vietoris sequence. 
\end{theorem}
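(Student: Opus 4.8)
The plan is to run the Mayer--Vietoris machinery of Section~\ref{sec:mv} in tandem with the obstruction-theoretic recipe of Proposition~\ref{prop:lifting}, and then push the resulting class through the exact sequence for $\bm{\pi}^{\mathbb{A}^1}_n({\op{B}}\op{SL}_n)$ using Proposition~\ref{prop:cohop} to see that it survives. First I would use the hypothesis that $c\in\op{CH}^{n+1}(X)/n!$ restricts trivially to $Y$ and $Z$: the Mayer--Vietoris sequence for $\mathbf{K}^{\op{M}}_{n+1}$ associated to $X=Y\cup Z$ then exhibits $c$ as the image of the connecting map $\partial\colon\op{H}^n_{\op{Nis}}(Y\cap Z,\mathbf{K}^{\op{M}}_{n+1})\to\op{CH}^{n+1}(X)/n!$, so there is a class $\tilde{c}\in\op{H}^n_{\op{Nis}}(Y\cap Z,\mathbf{K}^{\op{M}}_{n+1})$ with $\partial\tilde{c}=c$. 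Reducing modulo $n!$ and composing with the surjection from $\op{H}^n_{\op{Nis}}(Y\cap Z,\mathbf{K}^{\op{MW}}_{n+1})$ (as in Proposition~\ref{prop:mkclass}, which applies since $k=F(T)$ has $2$-cohomological dimension $\leq 1$) gives a Milnor--Witt cohomology class mapping to $c$. Since $Y\cap Z$ is affine of dimension $n+1$ and $\op{Q}_{2n+1}\simeq_{\mathbb{A}^1}\mathbb{A}^{n+1}\setminus\{0\}$ is $(n-1)$-$\mathbb{A}^1$-connected, the cohomotopy results (\cite[Prop.~1.1.10]{AsokFaselCohomotopy}, as in the Corollary after Proposition~\ref{prop:mkclass}) show $[Y\cap Z,\op{Q}_{2n+1}]_{\mathbb{A}^1}\to\op{H}^n_{\op{Nis}}(Y\cap Z,\mathbf{K}^{\op{MW}}_{n+1})$ is surjective, so we get an honest map $\alpha\colon Y\cap Z\to\op{Q}_{2n+1}$, hence a unimodular row of length $n+1$ and its associated stably free module $P$ of rank $n$ over $Y\cap Z$.

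Next I would identify the first lifting class of the composition $Y\cap Z\xrightarrow{\alpha}\op{Q}_{2n+1}\to{\op{B}}\op{SL}_n$ with the image of $\tilde{c}$ along the chain $\op{H}^n(\mathbf{K}^{\op{MW}}_{n+1})\to\op{H}^n(\mathbf{K}^{\op{M}}_{n+1}/n!)\to\op{H}^n(\bm{\pi}^{\mathbb{A}^1}_n{\op{B}}\op{SL}_n)$, using the factorization of $\bm{\pi}^{\mathbb{A}^1}_n(\op{Q}_{2n+1})\to\bm{\pi}^{\mathbb{A}^1}_n({\op{B}}\op{SL}_n)$ through $\mathbf{K}^{\op{M}}_{n+1}\twoheadrightarrow\mathbf{S}_{n+1}$ recalled in Section~\ref{sec:existence} together with Proposition~\ref{prop:lifting}. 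So it remains to show this class is non-zero in $\op{H}^n_{\op{Nis}}(Y\cap Z,\bm{\pi}^{\mathbb{A}^1}_n{\op{B}}\op{SL}_n)$. For this I invoke the long exact cohomology sequence attached to $0\to\mathbf{S}_{n+1}\to\bm{\pi}^{\mathbb{A}^1}_n({\op{B}}\op{SL}_n)\to\mathbf{K}^{\op{Q}}_n\to 0$, whose relevant stretch is
\[
\op{H}^{n-1}_{\op{Nis}}(Y\cap Z,\mathbf{K}^{\op{Q}}_n)\xrightarrow{\delta}\op{H}^n_{\op{Nis}}(Y\cap Z,\mathbf{S}_{n+1})\to\op{H}^n_{\op{Nis}}(Y\cap Z,\bm{\pi}^{\mathbb{A}^1}_n{\op{B}}\op{SL}_n).
\]
Our class lands in the image of $\op{H}^n(\mathbf{K}^{\op{M}}_{n+1}/n!)\twoheadrightarrow\op{H}^n(\mathbf{S}_{n+1})$, and it maps to a generator of $\op{CH}^{n+1}(Y\cap Z)/n!$-type data; the point is that it is \emph{not} in the image of the connecting map $\delta$ — because after further reduction modulo $n$ the boundary map $\op{CH}^{n-1}(Y\cap Z)\to\op{CH}^n(Y\cap Z)/n$ is trivial by Proposition~\ref{prop:cohop}, while the reduction mod $n$ of $c$ is non-trivial by hypothesis. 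Tracking this through the commuting ladder comparing the two long exact sequences (the one for $\mathbf{S}_{n+1}$ and its reduction mod $n$), the class cannot die in $\op{H}^n(\bm{\pi}^{\mathbb{A}^1}_n{\op{B}}\op{SL}_n)$, hence is non-trivial; this gives the non-trivial stably free module with the asserted lifting class.

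The main obstacle is the last step: controlling the indeterminacy coming from $\delta$, i.e.\ showing that the injectivity-up-to-$2$-torsion of $\op{H}^n(\mathbf{K}^{\op{M}}_{n+1}/n!)\to\op{H}^n(\bm{\pi}^{\mathbb{A}^1}_n{\op{B}}\op{SL}_n)$ actually suffices. One has to be careful that Proposition~\ref{prop:cohop} only kills the cohomology operation \emph{after} reduction mod $n$ (not mod $n!$), so the argument must be carried out with $\op{CH}^{n+1}/n$ coefficients throughout, using the hypothesis on $c$ in exactly that form; and since $n=p$ is odd, the $2$-torsion ambiguity is invisible mod $p$, so it does no harm. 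A secondary subtlety is matching conventions for the orientation twists $\Lambda^X_y$ appearing in the Gersten complex for the non-orientable sheaf $\bm{\pi}^{\mathbb{A}^1}_n{\op{B}}\op{SL}_n$, but this is exactly what Lemmas~\ref{lem:one}--\ref{lem:three} were set up to handle, so it should be a matter of citing them rather than reproving anything.
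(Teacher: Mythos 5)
Your opening moves (lifting $c$ along the Mayer--Vietoris boundary, using the cohomotopy surjection to obtain $\alpha\colon Y\cap Z\to\op{Q}_{2n+1}$, identifying the lifting class via Proposition~\ref{prop:lifting}) agree with the paper. The gap is in the nontriviality step. You want to show the class in $\op{H}^n_{\op{Nis}}(Y\cap Z,\mathbf{S}_{n+1})$ is not hit by $\delta\colon\op{H}^{n-1}_{\op{Nis}}(Y\cap Z,\mathbf{K}^{\op{Q}}_n)\to\op{H}^n_{\op{Nis}}(Y\cap Z,\mathbf{S}_{n+1})$ and invoke Proposition~\ref{prop:cohop} directly for this. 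But Proposition~\ref{prop:cohop} controls the connecting map only in the \emph{top} degree, namely $\op{CH}^n(W)=\op{H}^n_{\op{Nis}}(W,\mathbf{K}^{\op{Q}}_n)\to\op{H}^{n+1}_{\op{Nis}}(W,\mathbf{S}_{n+1})=\op{CH}^{n+1}(W)/n!$, where the Gersten entries reduce to copies of $\mathbb{Z}$ and $\mathbb{Z}/n!$ and Lemmas~\ref{lem:one}--\ref{lem:three} apply. It says nothing about $\delta$ in degree $n-1$; and indeed neither $\op{H}^{n-1}_{\op{Nis}}(Y\cap Z,\mathbf{K}^{\op{Q}}_n)$ nor $\op{H}^n_{\op{Nis}}(Y\cap Z,\mathbf{S}_{n+1})$ is a Chow group of $Y\cap Z$, so the claimed triviality of ``$\op{CH}^{n-1}(Y\cap Z)\to\op{CH}^n(Y\cap Z)/n$'' is off by one in cohomological degree and misidentifies the groups involved.

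What is missing is the degree shift via functoriality of the Mayer--Vietoris sequences with respect to the connecting map of the coefficient short exact sequence $0\to\mathbf{S}_{n+1}\to\bm{\pi}^{\mathbb{A}^1}_n({\op{B}}\op{SL}_n)\to\mathbf{K}^{\op{Q}}_n\to 0$. The paper forms the square, commutative up to sign,
\[
\xymatrix{
\op{H}^{n-1}_{\op{Nis}}(Y\cap Z,\mathbf{K}^{\op{Q}}_n) \ar[r] \ar[d]_{\delta} & \op{CH}^n(X) \ar[d]^{\partial} \\
\op{H}^n_{\op{Nis}}(Y\cap Z,\mathbf{K}^{\op{M}}_{n+1}/n!) \ar[r] & \op{CH}^{n+1}(X)/n!
}
\]
with horizontal arrows the Mayer--Vietoris boundary maps for $X=Y\cup Z$ and vertical arrows the sheaf-theoretic connecting maps (on $Y\cap Z$ at left, on $X$ at right). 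If your class in $\op{H}^n_{\op{Nis}}(Y\cap Z,\mathbf{K}^{\op{M}}_{n+1}/n!)$ lay in the image of $\delta$, then its Mayer--Vietoris boundary $c$ would lie in the image of $\partial\colon\op{CH}^n(X)\to\op{CH}^{n+1}(X)/n!$ on the $(n+1)$-dimensional $X$, which Proposition~\ref{prop:cohop} --- now applied to $X$ and in the correct degree --- shows dies after reduction mod $n$, contradicting the hypothesis on $c$. The ``ladder comparing the sequence for $\mathbf{S}_{n+1}$ with its reduction mod $n$'' you allude to is not this ladder; the essential move is to transport the question from degree $n-1$ on $Y\cap Z$ to degree $n$ on $X$ via the Mayer--Vietoris boundary, and without it Proposition~\ref{prop:cohop} gives you no control over $\delta$.
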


\begin{proof}
By Proposition~\ref{prop:mkclass} resp. its corollary, the geometric assumptions imply that there is a surjection 
\[
[Y\cap Z,\op{Q}_{2n+1}]_{\mathbb{A}^1}\to \op{H}^n_{\op{Nis}}(Y\cap Z,\mathbf{K}^{\op{MW}}_{n+1})\twoheadrightarrow \op{CH}^{n+1}(X)/n!
\]
We call $\alpha$ any map $Y\cap Z\to\op{Q}_{2n+1}$ lifting the class $c$. This map will necessarily induce a non-trivial element in $\op{H}^n_{\op{Nis}}(Y\cap Z,\mathbf{K}^{\op{M}}_{n+1}/n!)$. The canonical epimorphism $\mathbf{K}^{\op{M}}_{n+1}/n!\to\mathbf{S}_{n+1}$ induces isomorphisms after $n-1$-fold contraction. This implies that it induces an isomorphism 
\[
\op{H}^n_{\op{Nis}}(Y\cap Z,\mathbf{K}^{\op{M}}_{n+1}/n!)\cong\op{H}^n_{\op{Nis}}(Y\cap Z,\mathbf{S}_{n+1}).
\]
We need to prove that the class has non--trivial image under the morphism 
\[
\op{H}^n_{\op{Nis}}(Y\cap Z,\mathbf{K}^{\op{M}}_{n+1}/n!) \to \op{H}^n_{\op{Nis}}(Y\cap Z,\bm{\pi}^{\mathbb{A}^1}_n({\op{B}}\op{SL}_n))
\]
To do this, consider the ladder of Mayer--Vietoris sequences (associated to the covering $X=Y\cup Z$) for cohomology with coefficients in $\mathbf{K}^{\op{Q}}_p$ and $\mathbf{K}^{\op{M}}_{p+1}/p!$, connected by the boundary map arising from the exact sequence
\[
0\to\mathbf{S}_{n+1}\to\bm{\pi}^{\mathbb{A}^1}_n({\op{B}}\op{SL}_n)\to \mathbf{K}^{\op{Q}}_n\to 0. 
\]
This ladder contains the following square which is commutative up to sign
\[
\xymatrix{
\op{H}^{n-1}_{\op{Nis}}(Y\cap Z,\mathbf{K}^{\op{Q}}_n) \ar[r] \ar[d]_\partial & \op{CH}^n(X) \ar[d]^\partial \\
\op{H}^n_{\op{Nis}}(Y\cap Z,\mathbf{K}^{\op{M}}_{n+1}/n!) \ar[r] & \op{CH}^{n+1}(X)/n!
}
\]
where the horizontal maps are the boundary maps in the respective Mayer--Vietoris sequences, and the vertical maps are the boundary maps for the above exact sequence of sheaves. Assuming that the image of $\alpha$ in $\op{H}^n_{\op{Nis}}(Y\cap Z,\mathbf{K}^{\op{M}}_{n+1}/n!)$ is in the image of $\partial$, then its associated class in $\op{CH}^{n+1}(X)/n!$ is also in the image of $\partial$. But by Proposition~\ref{prop:cohop}, we know that the composition of the boundary map $\op{CH}^n(X)\to\op{CH}^{n+1}(X)/n!$ with the reduction $\op{CH}^{n+1}(X)/n!\to\op{CH}^{n+1}(X)/n$ is trivial. On the other hand, the class $c$ in $\op{CH}^{n+1}(X)/n!$ has nontrivial image in $\op{CH}^{n+1}(X)/n$ and hence cannot lie in the image of $\partial$. We have thus proved that the image of $\alpha$ in $\op{H}^n_{\op{Nis}}(Y\cap Z,\mathbf{S}_{n+1})$ obtained from lifting a non-trivial class $c\in\op{CH}^{n+1}(X)/n!$ has non-trivial image under the map
\[
\op{H}^n_{\op{Nis}}(Y\cap Z,\mathbf{S}_{n+1})\to \op{H}^n_{\op{Nis}}(Y\cap Z,\bm{\pi}^{\mathbb{A}^1}_n({\op{B}}\op{SL}_n))
\]

As mentioned in Section~\ref{sec:comparison}, the element in $[Y\cap Z,\op{Q}_{2n+1}]_{\mathbb{A}^1}$ corresponds (by results discussed in \cite[\S 4]{AsokFaselSpheres}) to an algebraic map $Y\cap Z\to\op{Q}_{2n+1}$ such that the composition $Y\cap Z\to\op{Q}_{2n+1}\to\mathbb{A}^{n+1}\setminus\{0\}$ is a unimodular row of length $n+1$ over the coordinate ring of $Y\cap Z$. Since the image of the class $\alpha$ in $\op{H}^n_{\op{Nis}}(Y\cap Z,\bm{\pi}^{\mathbb{A}^1}_n({\op{B}}\op{SL}_n))$ is non-trivial, this unimodular row gives rise to a non-trivial stably free module as claimed.
\end{proof}

\begin{remark}
The above theorem now explains Mohan Kumar's examples of stably free modules. Over a base field $K$, take $X\subset\mathbb{P}^{n+1}$ to be the complement of a hypersurface such that $\op{CH}^{n+1}(X)/n!$ is nontrivial. Now take any two hypersurfaces $F_1,F_2\in\mathbb{P}^n$ which intersect only outside $X$ and which each have a $K$-rational point. Then $Y=X\setminus F_1$ and $Z=X\setminus F_2$ provides a covering $X=Y\cup Z$ where $\op{CH}^{n+1}(Y)=\op{CH}^{n+1}(Z)=0$, as required in Theorem~\ref{thm:mkalt}. The fact that such a situation can be arranged over $K=k(T)$ with $k$ algebraically closed and with $n=p$ a prime is proved in \cite{mohan:kumar}, cf. the discussion in Section~\ref{sec:mkgeometry}. Clearing denominators this shows the existence of rank $p$ stably free modules over varieties of dimension $p+2$ over algebraically closed fields.
\end{remark}

\begin{remark}
The same argument actually provides several non-isomorphic examples of stably free modules. In the situation of Mohan Kumar's examples, $X\subseteq\mathbb{P}^{p+1}$ has $\op{CH}^{p+1}(X)\cong\mathbb{Z}/p\mathbb{Z}$. There are thus $p-1$ different non-zero elements in $\op{CH}^{p+1}(X)$. We can choose preimages for each of those elements under the surjective composition
\[
[Y\cap Z, \op{Q}_{2p+1}]_{\mathbb{A}^1}\to\op{H}^n(Y\cap Z,\mathbf{K}^{\op{M}}_{p+1}/p!)\twoheadrightarrow\op{CH}^{p+1}(X)/p.
\]
The arguments in Theorem~\ref{thm:mkalt} then provide $p-1$ stably free modules. By construction these stably free modules will have distinct images under the morphism $[Y\cap Z, \op{Q}_{2p+1}]_{\mathbb{A}^1}\to\op{H}^n(Y\cap Z,\mathbf{K}^{\op{M}}_{p+1}/p!)$. Applying the arguments of Theorem~\ref{thm:mkalt} to the (nonzero) differences of these classes shows that their images under the homomorphism 
\[
\op{H}^p(Y\cap Z,\mathbf{K}^{\op{M}}_{p+1}/p!)\to \op{H}^n(Y\cap Z,\bm{\pi}^{\mathbb{A}^1}_p({\op{B}}\op{SL}_p))
\]
are still distinct. This implies that the $p-1$ stably free modules as above are pairwise non-isomorphic. 
\end{remark}

\section{Stably free modules: Mohan Kumar's examples at the prime 2}
\label{sec:mk2}

Now we discuss Mohan Kumar's stably free modules of rank $2$. While the constructions in \cite{mohan:kumar} work for odd and even primes, the $\mathbb{A}^1$-topological reinterpretation here is slightly different for the prime $2$, due to a difference in the structure of the relevant $\mathbb{A}^1$-homotopy sheaf. The goal is to get examples of a smooth variety of dimension $3$ supporting a rank $2$ stably free module detected by $\op{H}^2_{\op{Nis}}(X,\bm{\pi}^{\mathbb{A}^1}_2{\op{B}}\op{SL}_2)$. Note that the stabilization morphism 
\[
\mathbf{K}^{\op{MW}}_2\cong \bm{\pi}^{\mathbb{A}^1}_2{\op{B}}\op{SL}_2 \to \bm{\pi}^{\mathbb{A}^1}_2{\op{B}}\op{SL}_3\cong\mathbf{K}^{\op{M}}_2
\]
induced from the stabilization morphism ${\op{B}}\op{SL}_2\to{\op{B}}\op{SL}_3$ is the natural projection, cf. \cite[Remark 7.21]{MField}. Note also that the lifting class for a projective module $\mathscr{P}$ naturally lives in $\op{H}^2_{\op{Nis}}(X,\bm{\pi}^{\mathbb{A}^1}_2{\op{B}}\op{SL}_2(\det\mathscr{P}))\cong \widetilde{\op{CH}}^2(X,\det\mathscr{P})$. Since we are looking for stably free modules, we are really interested in Chow--Witt groups with the trivial duality. Summing up, the stably free modules of rank $2$ should be detected by a non-trivial class in 
\[
\ker\left(\widetilde{\op{CH}}^2(X)\cong\op{H}^2_{\op{Nis}}(X,\mathbf{K}^{\op{MW}}_2)\to \op{H}^2_{\op{Nis}}(X,\mathbf{K}^{\op{M}}_2)\cong \op{CH}^2(X)\right).
\]

From the discussion in Section~\ref{sec:mkgeometry}, we obtain a smooth affine variety $Y\cap Z$ together with a morphism $\alpha\colon Y\cap Z\to\op{Q}_5$ which is detected on $\op{H}^2(Y\cap Z,\mathbf{K}^{\op{MW}}_3)$. We want to show that the composition $Y\cap Z\to\op{Q}_5\to{\op{B}}\op{SL}_2$  of $\alpha$ with the inclusion of the homotopy fiber of the stabilization morphism is not null-$\mathbb{A}^1$-homotopic. 

\begin{lemma}
\label{lem:xi3}
Let $k$ be an algebraically closed field of characteristic $\neq 2$ and let $X$ be the open subvariety of $\mathbb{P}^3$ over $k(T)$ given by Mohan Kumar's construction, cf.~Section~\ref{sec:mkgeometry}. There is a non-trivial class in $\op{H}^3_{\op{Nis}}(X,\mathbf{I}^3)$, detected by $\op{CH}^3(X)/2$.
\end{lemma}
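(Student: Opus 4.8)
The plan is to reduce the statement to a one-line cohomological-dimension computation together with Mohan Kumar's calculation of the top Chow group. Since the base field has characteristic $\neq 2$, the graded pieces of the $\mathbf{I}$-adic filtration are mod-$2$ Milnor K-theory sheaves, so I would start from the short exact sequence of strictly $\mathbb{A}^1$-invariant sheaves
\[
0\to\mathbf{I}^4\to\mathbf{I}^3\to\mathbf{K}^{\op{M}}_3/2\to 0,
\]
the quotient being $\mathbf{I}^3/\mathbf{I}^4\cong\mathbf{K}^{\op{M}}_3/2$ by the (sheafified) Milnor conjecture on quadratic forms. The ``detection'' map in the statement is then the one induced by the surjection $\mathbf{I}^3\twoheadrightarrow\mathbf{K}^{\op{M}}_3/2$, composed with the identification $\op{H}^3_{\op{Nis}}(X,\mathbf{K}^{\op{M}}_3/2)\cong\op{CH}^3(X)/2$ (Bloch's formula reduced modulo $2$, coming from reducing the Gersten complex of $\mathbf{K}^{\op{M}}_3$ mod $2$). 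So it suffices to show that the resulting map $\op{H}^3_{\op{Nis}}(X,\mathbf{I}^3)\to\op{CH}^3(X)/2$ is surjective onto a non-trivial group.

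For surjectivity I would pass to the long exact Nisnevich cohomology sequence of the short exact sequence above, whose relevant portion is
\[
\op{H}^3_{\op{Nis}}(X,\mathbf{I}^3)\to\op{H}^3_{\op{Nis}}(X,\mathbf{K}^{\op{M}}_3/2)\to\op{H}^4_{\op{Nis}}(X,\mathbf{I}^4),
\]
and observe that the right-hand group vanishes: $X$ is a smooth variety of Krull dimension $3$, so the Rost--Schmid resolution of the strictly $\mathbb{A}^1$-invariant sheaf $\mathbf{I}^4$ has length $3$ and the Nisnevich cohomological dimension of $X$ equals $3$. This is exactly the step where $\dim X = 3$ is used, and the reason why, in contrast with Proposition~\ref{prop:mkclass}, no cohomological-dimension hypothesis on $k(T)$ enters. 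For non-triviality of the target, I would invoke the construction recalled in Section~\ref{sec:mkgeometry}: for $p=2$ Mohan Kumar's variety $X=\mathbb{P}^3\setminus\op{V}(F_3)$ has $\op{CH}^3(X)\cong\mathbb{Z}/2\mathbb{Z}$, hence $\op{CH}^3(X)/2\cong\mathbb{Z}/2\mathbb{Z}$. Lifting a generator along the surjection produces the asserted non-trivial class in $\op{H}^3_{\op{Nis}}(X,\mathbf{I}^3)$.

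I do not expect a serious obstacle here; the only points that genuinely require care are the sheaf-level identification $\mathbf{I}^3/\mathbf{I}^4\cong\mathbf{K}^{\op{M}}_3/2$ on $\op{Sm}/k(T)$ and the identification $\op{H}^3_{\op{Nis}}(X,\mathbf{K}^{\op{M}}_3/2)\cong\op{CH}^3(X)/2$, both of which follow from Morel's theory by reducing the relevant Gersten complexes modulo $2$ and using exactness of the contraction functor, together with the bookkeeping that the induced map out of $\op{H}^3_{\op{Nis}}(X,\mathbf{I}^3)$ is indeed the one named in the lemma.
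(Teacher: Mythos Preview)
Your proposal is correct and follows essentially the same argument as the paper's proof: both use the short exact sequence $0\to\mathbf{I}^4\to\mathbf{I}^3\to\mathbf{K}^{\op{M}}_3/2\to 0$ from the Milnor conjecture, invoke vanishing of $\op{H}^4_{\op{Nis}}(X,\mathbf{I}^4)$ by Nisnevich cohomological dimension, and then lift the generator of $\op{CH}^3(X)/2\cong\mathbb{Z}/2\mathbb{Z}$ coming from Mohan Kumar's computation. Your write-up is slightly more explicit about the identifications involved, but there is no substantive difference in strategy.
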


\begin{proof}
By the results of Mohan Kumar, $\op{CH}^3(X)/2\cong\op{H}^3_{\op{Nis}}(X,\mathbf{K}^{\op{M}}_3/2)\cong\mathbb{Z}/2\mathbb{Z}$ is non-trivial. The exact sequence
\[
0\to\mathbf{I}^4\to\mathbf{I}^3\to\mathbf{K}^{\op{M}}_3/2 \to 0
\]
arising from Voevodsky's solution of the Milnor conjecture induces a sequence
\[
\op{H}^3_{\op{Nis}}(X,\mathbf{I}^3)\to \op{H}^3_{\op{Nis}}(X,\mathbf{K}^{\op{M}}_3/2)\to \op{H}^4_{\op{Nis}}(X,\mathbf{I}^4).
\] 
The last group vanishes since the Nisnevich cohomological dimension of $X$ is $3$. Therefore, we get a surjection $\op{H}^3_{\op{Nis}}(X,\mathbf{I}^3)\to\mathbb{Z}/2\mathbb{Z}$.
\end{proof}

Now we can consider the Mayer--Vietoris sequence again to lift this non-trivial element to another $\mathbf{I}^3$-cohomology group. Since $X$ is an open subvariety of $\mathbb{P}^3$ with closed hypersurface complement $S=\mathbb{P}^3\setminus X$ (with reduced induced subscheme structure), we have a localization sequence
\[
\op{H}^3(\mathbb{P}^3,\mathbf{I}^3)\to \op{H}^3(X,\mathbf{I}^3)\to \op{H}^3(S,\mathbf{I}^2)=0.
\]
The triviality of the last group follows from dimension reasons. By the computations in \cite{fasel:ij}, we have $\op{H}^3(\mathbb{P}^3,\mathbf{I}^3)\cong\op{W}(k(T))$. In particular, we can choose a nontrivial class in $\op{H}^3_{\op{Nis}}(X,\mathbf{I}^3)$ to be represented by an element of $\op{W}(k)$ of odd rank, supported on a rational point. Because the hypersurface complements of $Y$ and $Z$ contain rational points, this choice of class in $\mathbf{I}^3$-cohomology of $X$ will trivialize in the covering. This implies the following statement:

\begin{proposition}
\label{prop:h2i3}
Let $k$ be an algebraically closed field of characteristic $\neq 2$. In the situation $p=2$ of Mohan Kumar's example, the boundary map of the Mayer--Vietoris sequence associated to the covering $X=Y\cup Z$ provides a surjection
\[
\op{H}^2_{\op{Nis}}(Y\cap Z,\mathbf{I}^3)\twoheadrightarrow \op{H}^3_{\op{Nis}}(X,\mathbf{I}^3)\twoheadrightarrow \op{CH}^3(X)/2\cong\mathbb{Z}/2\mathbb{Z}.
\]
\end{proposition}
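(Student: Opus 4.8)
The plan is to produce a single class $\xi\in\op{H}^3_{\op{Nis}}(X,\mathbf{I}^3)$ that maps onto the generator of $\op{CH}^3(X)/2\cong\mathbb{Z}/2$ and whose restrictions to $Y$ and to $Z$ both vanish. By exactness of the Mayer--Vietoris sequence of Section~\ref{sec:mv}, such a $\xi$ lies in the image of the connecting map $\op{H}^2_{\op{Nis}}(Y\cap Z,\mathbf{I}^3)\to\op{H}^3_{\op{Nis}}(X,\mathbf{I}^3)$, and composing with the surjection $\op{H}^3_{\op{Nis}}(X,\mathbf{I}^3)\twoheadrightarrow\op{CH}^3(X)/2$ established in Lemma~\ref{lem:xi3} (it comes from $0\to\mathbf{I}^4\to\mathbf{I}^3\to\mathbf{K}^{\op{M}}_3/2\to 0$ together with $\op{H}^4_{\op{Nis}}(X,\mathbf{I}^4)=0$) then gives the assertion. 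So the whole statement reduces to exhibiting $\xi$.

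I would build $\xi$ from the rational points of Mohan Kumar's construction recalled in Section~\ref{sec:mkgeometry}: the point $x=[0:0:0:1]\in X\setminus Y$ and the point $y=[0:0:1:1]\in X\setminus Z$ (the latter lies on $\op{V}(G)$), each of which generates $\op{CH}^3(X)\cong\mathbb{Z}/2$. Working with the Rost--Schmid complexes of Section~\ref{sec:rscomplex}, observe that for a rational point $P$ of $X$ a rank-one form $\langle 1\rangle\in(\mathbf{I}^3)_{-3}(\kappa(P))=\mathbf{W}(\kappa(P))$ gives a cycle $[P]\otimes\langle 1\rangle$ which is automatically a cocycle in top degree; write $\xi_x,\xi_y\in\op{H}^3_{\op{Nis}}(X,\mathbf{I}^3)$ for the resulting classes. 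The reduction $\mathbf{I}^3\to\mathbf{K}^{\op{M}}_3/2$ sends $\langle 1\rangle$ to the nonzero class in $\mathbf{K}^{\op{M}}_0(\kappa(P))/2$, so both $\xi_x$ and $\xi_y$ map to the generator of $\op{CH}^3(X)/2$ and are in particular nontrivial. Since restriction to an open subscheme is projection onto the Rost--Schmid summands indexed by its points, $\xi_x$ restricts to $0$ on $Y$ (as $x\notin Y$) and $\xi_y$ restricts to $0$ on $Z$ (as $y\notin Z$).

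The heart of the matter is the identification $\xi_x=\xi_y$ in $\op{H}^3_{\op{Nis}}(X,\mathbf{I}^3)$. For this I would use the line $\ell\subset\mathbb{P}^3$ joining $x$ and $y$; it meets $X$ in a dense open, and it carries a rational function $g$ with $\op{div}_\ell(g)=[y]-[x]$. Feeding the Pfister form $\langle\langle g\rangle\rangle=\langle 1,-g\rangle\in\mathbf{I}(\kappa(\ell))=(\mathbf{I}^3)_{-2}(\kappa(\ell))$ into the Rost--Schmid differential on $X$ and computing the second residues at the points of $\ell\cap X$, one gets (up to sign) $([x]+[y])\otimes\langle 1\rangle$: the residue of $\langle\langle g\rangle\rangle$ is a rank-one form at each of $x$ and $y$ and vanishes at every other point of $\ell\cap X$, where $g$ is a unit. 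Here algebraic closedness of $k$ enters decisively: $-1$ is a square in $k(T)$, hence in every residue field occurring, so the Witt ring of each such field has exponent $2$ and all units have class $\langle 1\rangle$; in particular $([x]+[y])\otimes\langle 1\rangle=([x]-[y])\otimes\langle 1\rangle$ in the target of the differential. Thus $\xi_x-\xi_y$ is a boundary, so $\xi_x=\xi_y=:\xi$, which simultaneously restricts to $0$ on $Y$ and on $Z$ while mapping to the generator of $\op{CH}^3(X)/2$. Equivalently, one can package this as $\op{H}^3_{\op{Nis}}(Y,\mathbf{I}^3)=\op{H}^3_{\op{Nis}}(Z,\mathbf{I}^3)=0$, using Fasel's computation $\op{H}^3_{\op{Nis}}(\mathbb{P}^3,\mathbf{I}^3)\cong\op{W}(k(T))$ from \cite{fasel:ij} to see that $\op{H}^3_{\op{Nis}}(X,\mathbf{I}^3)$ is generated by rank-one classes supported at rational points.

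The step I expect to cost the most work is the residue computation on $\ell$: it requires unwinding the definition of the Rost--Schmid differential in Section~\ref{sec:rscomplex}, including the twist by the relative canonical bundle of the normalization of $\ell$ over $X$, and tracking the square classes of units. The point of the exercise is exactly that over $k(T)$ with $k$ algebraically closed all these square classes vanish and the difference between $[x]+[y]$ and $[x]-[y]$ collapses --- the analogue, in the $\mathbf{I}^3$-setting, of how the hypothesis $\op{cd}_2(k)\le 1$ is used in Proposition~\ref{prop:mkclass}.
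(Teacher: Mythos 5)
Your proof is correct and takes essentially the same approach as the paper: both produce a class in $\op{H}^3_{\op{Nis}}(X,\mathbf{I}^3)$ detected in $\op{CH}^3(X)/2$, represented by a rank-one form at a rational point, and observe it restricts to zero on both $Y$ and $Z$ before invoking the Mayer--Vietoris sequence. The only difference is presentational: the paper gets the needed independence of the class from the choice of rational point implicitly from Fasel's computation $\op{H}^3(\mathbb{P}^3,\mathbf{I}^3)\cong\op{W}(k(T))$ together with surjectivity of restriction to $X$ (and hence to $Y$ and $Z$), whereas you spell this out by the explicit Rost--Schmid residue computation on the line joining $x$ and $y$.
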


This produces a class in $\op{H}^2_{\op{Nis}}(Y\cap Z,\mathbf{I}^3)$ which is detected by the non-trivial $\mathbf{I}^3$-cohomology of $X$. Now consider the exact sequence
\[
0\to \mathbf{I}^3\to\mathbf{K}^{\op{MW}}_2\to\mathbf{K}^{\op{M}}_2\to 0. 
\]
We want to show that the class produced above has non-trivial image under 
\[
\op{H}^2_{\op{Nis}}(Y\cap Z,\mathbf{I}^3)\to\op{H}^2_{\op{Nis}}(Y\cap Z,\mathbf{K}^{\op{MW}}_2)\cong\widetilde{\op{CH}}^2(Y\cap Z). 
\]
This would produce a non-trivial element in the kernel of the natural projection map $\widetilde{\op{CH}}^2(Y\cap Z)\to\op{CH}^2(Y\cap Z)$. As before, we use the  Mayer--Vietoris sequences for the covering $X=Y\cup Z$. We use the two sequences for coefficients with $\mathbf{K}^{\op{M}}_2$ and $\mathbf{I}^3$, connected by the boundary map associated to the above exact sequence. This ladder of Mayer--Vietoris sequences contains the  square which is commutative up to sign:
\[
\xymatrix{
\op{H}^1_{\op{Nis}}(Y\cap Z,\mathbf{K}^{\op{M}}_2) \ar[r] \ar[d] & \op{H}^2_{\op{Nis}}(X,\mathbf{K}^{\op{M}}_2) \ar[d] \\
\op{H}^2_{\op{Nis}}(Y\cap Z,\mathbf{I}^3)\ar[r] & \op{H}^3_{\op{Nis}}(X,\mathbf{I}^3).
}
\]
The horizontal arrows are the boundary maps for the Mayer--Vietoris sequences, the vertical maps are the boundary maps for the exact sequence of sheaves. To check that the left-hand vertical map doesn't hit the element from Proposition~\ref{prop:h2i3} it suffices to show that the right-hand vertical map doesn't hit the non-trivial $2$-torsion from Lemma~\ref{lem:xi3}.  So we need to compute the right-hand boundary map, which can be viewed as an integral Bockstein operation. By \cite[Proposition 11.6]{fasel:ij}, we know that the Bockstein map $\beta\colon\op{CH}^2(\mathbb{P}^3)\to\op{H}^3_{\op{Nis}}(\mathbb{P}^3,\mathbf{I}^3)$ is trivial. Since the Bockstein map is compatible with the maps in localization sequences, we get a commutative square
\[
\xymatrix{
\op{CH}^2(\mathbb{P}^3) \ar[r] \ar[d]_\beta & \op{CH}^2(X) \ar[d]^\beta \\
\op{H}^3(\mathbb{P}^3,\mathbf{I}^3) \ar[r]  & \op{H}^3(X,\mathbf{I}^3)
}
\]
where the horizontal arrows are the restriction to open subschemes in the localization sequence. Note that Zariski and Nisnevich cohomology coincide. By the localization sequence, the restriction $\op{CH}^2(\mathbb{P}^3)\to\op{CH}^2(X)$ is in fact surjective. This implies, in particular, that the right-hand vertical map $\beta$ is the zero map, because the left-hand vertical map is trivial, as claimed. 
The previous square from the ladder of Mayer-Vietoris sequences now implies that the element produced in Proposition~\ref{prop:h2i3} induces a non-trivial class in 
\[
\ker\left(\widetilde{\op{CH}}^2(Y\cap Z)\to\op{CH}^2(Y\cap Z)\right).
\]

We can take any such non-trivial element as a class in $\op{H}^2_{\op{Nis}}(Y\cap Z,\mathbf{K}^{\op{MW}}_2)$ and lift it along the surjection $[Y\cap Z,\op{Q}_5]_{\mathbb{A}^1}\twoheadrightarrow \op{H}^2_{\op{Nis}}(Y\cap Z,\mathbf{K}^{\op{MW}}_2)$. This corresponds to a unimodular row of length $3$ which gives rise to a stably free module of rank $2$. By construction, its image in  $\op{H}^2_{\op{Nis}}(Y\cap Z,\bm{\pi}_2^{\mathbb{A}^1}{\op{B}}\op{SL}_2)$ is non-trivial. Applying the representability theorem~\ref{thm:representability}, we have then proved the following:

\begin{theorem}
Let $k$ be an algebraically closed field of characteristic $\neq 2$. There exists a $3$-dimensional smooth affine scheme $Y\cap Z$ over $k(T)$ and a non-trivial stably trivial rank $2$ vector bundle on $Y\cap Z$. Clearing denominators, there exists a $4$-dimensional smooth affine scheme over $k$ with a non-trivial stably free rank $2$ bundle over it. 
\end{theorem}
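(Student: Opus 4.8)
The plan is to collect the ingredients assembled in Section~\ref{sec:mk2}; the proof is essentially a chase through exact sequences with a single non-formal input, which I isolate below. First I would invoke the representability theorem~\ref{thm:representability} together with the obstruction theory of Section~\ref{sec:representable}: since ${\op{B}}\op{SL}_2$ is $\mathbb{A}^1$-simply connected with $\bm{\pi}^{\mathbb{A}^1}_2{\op{B}}\op{SL}_2\cong\mathbf{K}^{\op{MW}}_2$ and the Mohan Kumar variety $Y\cap Z$ is a smooth affine threefold, a rank $2$ bundle on $Y\cap Z$ is classified by a single lifting class in $\op{H}^2_{\op{Nis}}(Y\cap Z,\mathbf{K}^{\op{MW}}_2)\cong\widetilde{\op{CH}}^2(Y\cap Z)$, which vanishes exactly when the classifying map is null-homotopic. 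A unimodular row of length $3$ over the coordinate ring of $Y\cap Z$ gives a morphism $Y\cap Z\to\op{Q}_5$; composing with the inclusion $\op{Q}_5\to{\op{B}}\op{SL}_2$ of the homotopy fiber of the stabilization ${\op{B}}\op{SL}_2\to{\op{B}}\op{SL}_3$ yields a rank $2$ bundle that is stably trivial (adding a free line recovers the trivial bundle), and since the induced map $\bm{\pi}^{\mathbb{A}^1}_2(\op{Q}_5)\to\bm{\pi}^{\mathbb{A}^1}_2({\op{B}}\op{SL}_2)$ factors through $\mathbf{I}^3$, the lifting class of this bundle lies in $\ker\big(\widetilde{\op{CH}}^2(Y\cap Z)\to\op{CH}^2(Y\cap Z)\big)$. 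So it suffices to produce such a unimodular row whose lifting class in this kernel is non-trivial.

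To produce the class, I would run Mohan Kumar's construction (Section~\ref{sec:mkgeometry}, over the base field $k(T)$) at the prime $2$, obtaining $X\subset\mathbb{P}^3$ with $\op{CH}^3(X)\cong\mathbb{Z}/2\mathbb{Z}$ and a Zariski covering $X=Y\cup Z$ with $Y\cap Z$ affine of dimension $3$ and $\op{CH}^3(Y)=\op{CH}^3(Z)=0$, together with a morphism $\alpha\colon Y\cap Z\to\op{Q}_5$ (a length $3$ unimodular row) detected in $\op{H}^2_{\op{Nis}}(Y\cap Z,\mathbf{K}^{\op{MW}}_3)$. By Lemma~\ref{lem:xi3} the generator of $\op{CH}^3(X)/2$ lifts to a non-trivial class in $\op{H}^3_{\op{Nis}}(X,\mathbf{I}^3)$; using the localization sequence for $X\subset\mathbb{P}^3$ and the identification $\op{H}^3(\mathbb{P}^3,\mathbf{I}^3)\cong\op{W}(k(T))$ of \cite{fasel:ij}, I would choose this class to be represented by an odd-rank Witt class supported on a rational point of $X$. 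Since the hypersurface complements of $Y$ and $Z$ carry rational points, this representative trivializes in the covering, so by Proposition~\ref{prop:h2i3} the Mayer--Vietoris boundary produces a class $\xi\in\op{H}^2_{\op{Nis}}(Y\cap Z,\mathbf{I}^3)$ surjecting onto a generator of $\op{CH}^3(X)/2$; using the surjectivity of $[Y\cap Z,\op{Q}_5]_{\mathbb{A}^1}$ onto the relevant cohomology (available since $\op{Q}_5$ is $1$-$\mathbb{A}^1$-connected and $\dim(Y\cap Z)=3$, cf.\ the corollary to Proposition~\ref{prop:mkclass} and \cite[Proposition 1.1.10 (1)]{AsokFaselCohomotopy}) we may assume $\xi$ is the class induced by $\alpha$.

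The hard part will be to show that the image of $\xi$ under $\op{H}^2_{\op{Nis}}(Y\cap Z,\mathbf{I}^3)\to\op{H}^2_{\op{Nis}}(Y\cap Z,\mathbf{K}^{\op{MW}}_2)\cong\widetilde{\op{CH}}^2(Y\cap Z)$, arising from $0\to\mathbf{I}^3\to\mathbf{K}^{\op{MW}}_2\to\mathbf{K}^{\op{M}}_2\to0$, is non-zero, equivalently that it lands non-trivially in $\ker\big(\widetilde{\op{CH}}^2(Y\cap Z)\to\op{CH}^2(Y\cap Z)\big)$. I would compare the Mayer--Vietoris sequences for $\mathbf{K}^{\op{M}}_2$- and $\mathbf{I}^3$-coefficients for the covering $X=Y\cup Z$, connected by the boundary maps of the above sheaf sequence; a chase through the resulting square, which is commutative up to sign, reduces the claim to showing that the integral Bockstein $\beta\colon\op{CH}^2(X)\to\op{H}^3_{\op{Nis}}(X,\mathbf{I}^3)$ does not hit the non-trivial $2$-torsion class of Lemma~\ref{lem:xi3}. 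For this I would invoke \cite[Proposition 11.6]{fasel:ij}, giving the vanishing of $\beta$ on $\mathbb{P}^3$, together with the compatibility of $\beta$ with the restriction maps in localization sequences and the surjectivity of $\op{CH}^2(\mathbb{P}^3)\to\op{CH}^2(X)$; these force $\beta$ to vanish on $X$ as well. This Bockstein vanishing is the single non-formal ingredient; everything around it is diagram-chasing. It follows that $\alpha$ has non-trivial lifting class in $\op{H}^2_{\op{Nis}}(Y\cap Z,\bm{\pi}^{\mathbb{A}^1}_2{\op{B}}\op{SL}_2)$.

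Once this is established, the unimodular row $\alpha$ gives a rank $2$ stably free module over $Y\cap Z$ whose classifying map $Y\cap Z\to{\op{B}}\op{SL}_2$ is not null-homotopic; by Theorem~\ref{thm:representability} the associated rank $2$ bundle is non-trivial, while being stably trivial by the first paragraph. For the final assertion, $Y\cap Z$ and $\alpha$ are cut out by finitely many equations with coefficients in $k(T)$, hence are already defined over $k[T,h^{-1}]$ for a suitable nonzero $h\in k[T]$; this gives a $4$-dimensional smooth affine $k$-variety with a rank $2$ stably free bundle whose restriction to the generic fiber is the non-trivial bundle above, so the spread-out bundle is non-trivial as well. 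This proves both claims.
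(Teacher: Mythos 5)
Your proposal is correct and follows essentially the same route as the paper's proof: you run Mohan Kumar's construction at $p=2$, lift the non-trivial class in $\op{CH}^3(X)/2$ to $\op{H}^3_{\op{Nis}}(X,\mathbf{I}^3)$, use the localization sequence and $\op{H}^3(\mathbb{P}^3,\mathbf{I}^3)\cong\op{W}(k(T))$ to choose a representative supported on a rational point that trivializes in the covering, apply the Mayer--Vietoris boundary to produce a class in $\op{H}^2_{\op{Nis}}(Y\cap Z,\mathbf{I}^3)$, and then use the vanishing of the integral Bockstein $\beta\colon\op{CH}^2(X)\to\op{H}^3_{\op{Nis}}(X,\mathbf{I}^3)$ (via \cite[Proposition 11.6]{fasel:ij} and compatibility with the surjection $\op{CH}^2(\mathbb{P}^3)\twoheadrightarrow\op{CH}^2(X)$) to force a non-trivial element of $\ker\big(\widetilde{\op{CH}}^2(Y\cap Z)\to\op{CH}^2(Y\cap Z)\big)$, lifted via $[Y\cap Z,\op{Q}_5]_{\mathbb{A}^1}$ to a unimodular row defining the stably free module. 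This is exactly the argument of Section~\ref{sec:mk2}, organized the same way.
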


\begin{remark}
Over other fields $k$ it could be possible to get other types of examples. Since the hypersurface complement of $X$ has no rational points, the map $\op{H}^3_{\op{Nis},\mathbb{P}^3\setminus X}(\mathbb{P}^3,\mathbf{I}^3)\to \op{H}^3_{\op{Nis}}(\mathbb{P}^3,\mathbf{I}^3)$ should map a class (given by elements of $\op{W}(k(x))$ supported on points $x$ on $W$) to the sum of transfers of the classes from $\op{W}(k(x))$ supported on a rational point. In particular, the ideal generated by transfers of classes from points on $W$ should be strictly smaller than the fundamental ideal whenever there are non-trivial quaternion algebras over $k$. Classes not in the image of transfer should then yield non-trivial classes in $\op{H}^3_{\op{Nis}}(X,\mathbf{I}^3)$ not detected on $\op{CH}^3(X)/2$. Applying the previous argument would produce stably free modules of a more arithmetic nature on $X$. This could be quite similar to the examples discussed in \cite[Section 3]{bhatwadekar:fasel:sane}.
\end{remark}

\section{Stably trivial torsors for other groups}
\label{sec:rest}

In this section, we discuss analogous results for torsors under other groups. In slight variation of the notion of stably trivial torsors we can consider, for an embedding $H\to G$ of algebraic groups, $H$-torsors which become trivial after extending the structure group to $G$. We discuss such phenomena in the other classical series $B_n$, $C_n$ and $D_n$, and for the homomorphism $\op{G}_2\to\op{Spin}(7)$. 

\subsection{Symplectic groups}

We first discuss how the construction of Mohan Kumar also gives rise to stably free torsors for the symplectic groups. By  \cite[Section 4.2]{AsokFaselSplitting}, there is a cartesian square of algebraic groups
\[
\xymatrix{
\op{Sp}_{2n}\ar[r] \ar[d] & \op{Sp}_{2n+2} \ar[d] \\
\op{SL}_{2n+1}\ar[r] & \op{SL}_{2n+2}.
}
\]
This implies the existence of a commutative diagram of $\mathbb{A}^1$-fiber sequences 
\[
\xymatrix{
\mathbb{A}^{2n+2}\setminus\{0\} \ar[r] \ar[d]_\cong &
{\op{B}}\op{Sp}_{2n}\ar[r] \ar[d] & {\op{B}}\op{Sp}_{2n+2} \ar[d] \\
\mathbb{A}^{2n+2}\setminus\{0\} \ar[r]  & {\op{B}}\op{SL}_{2n+1}\ar[r] & {\op{B}}\op{SL}_{2n+2}.
}
\]

Recall that Section~\ref{sec:mkgeometry} provided a smooth affine variety $Y\cap Z$ with a morphism $Y\cap Z\to\op{Q}_{2p+1}$ which is detected in Milnor K-cohomology. For $p$ odd, setting $2n=p-1$, we can now compose  the morphism $Y\cap Z\to \op{Q}_{2p+1}\to\mathbb{A}^{p+1}\setminus\{0\}$ with the natural map $\mathbb{A}^{p+1}\setminus\{0\}\to{\op{B}}\op{Sp}_{p-1}$ which is the inclusion of the $\mathbb{A}^1$-homotopy fiber of the stabilization map ${\op{B}}\op{Sp}_{p-1}\to {\op{B}}\op{Sp}_{p+1}$ in the above diagram. 

By construction, this produces a symplectic vector bundle which is trivial after adding a symplectic line, by the top horizontal fiber sequence. The middle vertical map takes an $\op{Sp}_{p-1}$-torsor to the direct sum of its underlying vector bundle with a trivial line bundle. The commutativity of the left square in the above diagram of fiber sequences thus implies that Mohan Kumar's stably trivial module splits off a trivial line bundle and the remaining vector bundle of rank $p-1$ has a symplectic structure. Nontriviality of Mohan Kumar's stably trivial modules then implies that the corresponding stably trivial torsors for the symplectic groups are nontrivial. In fact, from the commutative diagram of fiber sequences, we  get a commutative diagram
\[
\xymatrix{
[Y\cap Z,\op{Q}_{2p+1}]_{\mathbb{A}^1} \ar[r] & \op{H}^p(Y\cap Z,\mathbf{K}^{\op{MW}}_{p+1}) \ar[r] \ar[rd] & \op{H}^p(Y\cap Z,\bm{\pi}^{\mathbb{A}^1}_p{\op{B}}\op{Sp}_{p-1}) \ar[d] \\
&&\op{H}^p(Y\cap Z,\bm{\pi}^{\mathbb{A}^1}_p{\op{B}}\op{SL}_{p})
}
\]
from which we see that the stably trivial symplectic modules are in fact detected by nontrivial classes in $\op{H}^p(Y\cap Z,\bm{\pi}^{\mathbb{A}^1}_p{\op{B}}\op{Sp}_{p-1})$. 

\begin{theorem}
\label{thm:sp}
Let $k$ be an algebraically closed field of characteristic $\neq 2$. For every odd prime $p$, there exists a $p+1$-dimensional smooth affine variety over $k(T)$ and a non-trivial stably trivial $\op{Sp}_{p-1}$-torsor over it. Clearing denominators, there exists a $p+2$-dimensional smooth affine variety over $k$ and a stably trivial non-trivial $\op{Sp}_{p-1}$-torsor over it. 
\end{theorem}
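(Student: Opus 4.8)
The plan is to deduce the symplectic statement from the vector bundle case established in Theorem~\ref{thm:mkalt}, by transporting the obstruction-theoretic lifting class along the commutative diagram of $\mathbb{A}^1$-fiber sequences attached to the cartesian square of groups $\op{Sp}_{2n}\to\op{Sp}_{2n+2}$, $\op{SL}_{2n+1}\to\op{SL}_{2n+2}$ of \cite[Section 4.2]{AsokFaselSplitting}. Specializing to $2n=p-1$, the common $\mathbb{A}^1$-homotopy fiber of both stabilization maps is $\mathbb{A}^{p+1}\setminus\{0\}$ and the bottom row of the diagram reads $\mathbb{A}^{p+1}\setminus\{0\}\to{\op{B}}\op{SL}_p\to{\op{B}}\op{SL}_{p+1}$. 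The geometric input of Section~\ref{sec:mkgeometry} (Mohan Kumar's construction with $n=p$) provides, over $k(T)$, a $(p+1)$-dimensional smooth affine variety $Y\cap Z$ with a class $c\in\op{CH}^{p+1}(X)/p!$ restricting trivially to $Y$ and to $Z$ and having nonzero reduction modulo $p$; Theorem~\ref{thm:mkalt} then furnishes a morphism $\alpha\colon Y\cap Z\to\op{Q}_{2p+1}\simeq_{\mathbb{A}^1}\mathbb{A}^{p+1}\setminus\{0\}$ whose image in $\op{H}^p_{\op{Nis}}(Y\cap Z,\bm{\pi}^{\mathbb{A}^1}_p({\op{B}}\op{SL}_p))$ is nonzero.

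Next I would form the composite $Y\cap Z\xrightarrow{\alpha}\op{Q}_{2p+1}\to\mathbb{A}^{p+1}\setminus\{0\}\to{\op{B}}\op{Sp}_{p-1}$, the last map being the inclusion of the $\mathbb{A}^1$-homotopy fiber of the stabilization ${\op{B}}\op{Sp}_{p-1}\to{\op{B}}\op{Sp}_{p+1}$ in the top row of the diagram. Stable triviality of the resulting $\op{Sp}_{p-1}$-torsor is then automatic, since the top fiber sequence forces the further composite to ${\op{B}}\op{Sp}_{p+1}$ to be null-$\mathbb{A}^1$-homotopic (equivalently, the underlying rank $p-1$ bundle becomes free after adding a trivial line). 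For non-triviality, the first lifting class of this composite lives in $\op{H}^p_{\op{Nis}}(Y\cap Z,\bm{\pi}^{\mathbb{A}^1}_p({\op{B}}\op{Sp}_{p-1}))$, and naturality of the Postnikov towers along the middle vertical map of the diagram of fiber sequences identifies its image in $\op{H}^p_{\op{Nis}}(Y\cap Z,\bm{\pi}^{\mathbb{A}^1}_p({\op{B}}\op{SL}_p))$ with the lifting class of the rank $p$ bundle obtained above; this is precisely the commutative triangle displayed just before the statement. Since that latter class is nonzero, the symplectic lifting class is nonzero, and the obstruction-theoretic dictionary of Section~\ref{sec:representable} together with the representability theorem~\ref{thm:representability} (applicable since $\op{Sp}_{p-1}$ is special and ${\op{B}}\op{Sp}_{p-1}$ is $\mathbb{A}^1$-simply connected) shows that the corresponding $\op{Sp}_{p-1}$-torsor over $Y\cap Z$ is non-trivial.

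For the absolute statement one spreads $Y\cap Z$ out over $k$: the variety together with the symplectic torsor on it is cut out by finitely many equations over $k(T)$, so after shrinking there is a dense open $U\subseteq\mathbb{A}^1_k$, a smooth affine $U$-scheme $\mathcal{W}$ of relative dimension $p+1$, and an $\op{Sp}_{p-1}$-torsor on $\mathcal{W}$ restricting over the generic point to $Y\cap Z$ with its torsor; after a further shrinking the trivialization obtained by enlarging the structure group to $\op{Sp}_{p+1}$ also spreads out. Then $\mathcal{W}$ is a $(p+2)$-dimensional smooth affine variety over $k$ carrying a stably trivial torsor, which remains non-trivial because a trivialization over $\mathcal{W}$ would restrict to one over the generic fiber. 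The step I would watch most carefully is the identification of the image of the symplectic lifting class under the middle vertical arrow of the diagram of fiber sequences --- a compatibility of first $k$-invariants along a map of Postnikov towers --- but since this is exactly the commutative triangle already recorded before the statement, in practice the proof reduces to a short assembly of Theorem~\ref{thm:mkalt} with that diagram.
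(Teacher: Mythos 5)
Your proof is correct and follows essentially the same route as the paper: the cartesian square of groups from Asok--Fasel, the resulting commutative diagram of $\mathbb{A}^1$-fiber sequences with common fiber $\mathbb{A}^{p+1}\setminus\{0\}$, stable triviality from the top fiber sequence, and non-triviality read off from the commutative triangle relating $\op{H}^p(Y\cap Z,\mathbf{K}^{\op{MW}}_{p+1})$, $\op{H}^p(Y\cap Z,\bm{\pi}^{\mathbb{A}^1}_p{\op{B}}\op{Sp}_{p-1})$, and $\op{H}^p(Y\cap Z,\bm{\pi}^{\mathbb{A}^1}_p{\op{B}}\op{SL}_p)$. The only difference is cosmetic: you make explicit the spreading-out argument for the ``clearing denominators'' clause, which the paper states without detail, and you phrase the detection step in terms of naturality of first lifting classes rather than simply chasing the displayed commutative triangle — but both amount to the same factorization argument.
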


\begin{corollary}
Let $k$ be an algebraically closed field of characteristic $\neq 2$ and let $p$ be an odd prime. The stably free modules of rank $p$ of Mohan Kumar split off a trivial line, and the resulting stably free module of rank $p-1$ has a symplectic structure. 
\end{corollary}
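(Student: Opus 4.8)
The plan is to extract the statement from the commutative diagram of $\mathbb{A}^1$-fiber sequences established just above in the proof of Theorem~\ref{thm:sp}, combined with the representability theorem. Specializing that diagram to $2n=p-1$, recall that Mohan Kumar's stably free module $P$ of rank $p$ over $Y\cap Z$ is (by the construction in Section~\ref{sec:existence}) the module classified by the composite $Y\cap Z\xrightarrow{\alpha}\mathbb{A}^{p+1}\setminus\{0\}\to{\op{B}}\op{SL}_p$, where the second arrow is the inclusion of the $\mathbb{A}^1$-homotopy fiber of the stabilization ${\op{B}}\op{SL}_p\to{\op{B}}\op{SL}_{p+1}$; this is exactly $\alpha$ followed by the bottom horizontal arrow of the left square in that diagram.

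First I would use that the left vertical arrow of the diagram is the identity on $\mathbb{A}^{p+1}\setminus\{0\}$: commutativity of the left square then shows that the classifying map of $P$ is $\mathbb{A}^1$-homotopic to $Y\cap Z\xrightarrow{\alpha}\mathbb{A}^{p+1}\setminus\{0\}\to{\op{B}}\op{Sp}_{p-1}\to{\op{B}}\op{SL}_p$. By Theorem~\ref{thm:representability} the first two arrows classify an $\op{Sp}_{p-1}$-torsor over the smooth affine scheme $Y\cap Z$, that is, a rank $p-1$ symplectic bundle $Q$; moreover $Q$ is stably trivial, since the top fiber sequence $\mathbb{A}^{p+1}\setminus\{0\}\to{\op{B}}\op{Sp}_{p-1}\to{\op{B}}\op{Sp}_{p+1}$ forces the composite $Y\cap Z\to{\op{B}}\op{Sp}_{p-1}\to{\op{B}}\op{Sp}_{p+1}$ to be null-homotopic. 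Next, as recorded just before Theorem~\ref{thm:sp}, the middle vertical map ${\op{B}}\op{Sp}_{p-1}\to{\op{B}}\op{SL}_p$ sends an $\op{Sp}_{p-1}$-torsor to the direct sum of its underlying vector bundle with a trivial line bundle; hence the classifying map of $P$ is $\mathbb{A}^1$-homotopic to that of $\underline{Q}\oplus\mathcal{O}$, where $\underline{Q}$ denotes the vector bundle underlying $Q$. Since $\mathbb{A}^1$-homotopic maps $Y\cap Z\to{\op{B}}\op{SL}_p$ classify isomorphic oriented rank $p$ bundles, hence in particular isomorphic rank $p$ bundles, over the smooth affine scheme $Y\cap Z$ (again Theorem~\ref{thm:representability}), I would conclude $P\cong\underline{Q}\oplus\mathcal{O}$. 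Thus $P$ splits off a trivial line; the complementary rank $p-1$ summand $\underline{Q}$ is stably free (indeed $\underline{Q}\oplus\mathcal{O}^2\cong P\oplus\mathcal{O}$ is free, as $P$ is the kernel of a unimodular row of length $p+1$) and it carries the symplectic form of $Q$ --- which is the assertion.

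The argument is essentially formal once the diagram of fiber sequences from the proof of Theorem~\ref{thm:sp} is in hand, so I do not expect a genuine obstacle. The one point deserving a little care is the passage from an $\mathbb{A}^1$-homotopy-theoretic factorization of the classifying map to the honest algebraic conclusion that $P$ itself splits with a symplectic complement; this is precisely what Theorem~\ref{thm:representability} provides, provided one checks (implicit in \cite{AsokFaselSplitting}) that the bijection of that theorem is natural in the group, so that the homomorphism $\op{Sp}_{p-1}\to\op{SL}_p$ underlying the middle vertical map indeed induces the extension-of-structure-group operation on isomorphism classes of torsors.
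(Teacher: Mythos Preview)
Your argument is correct and is precisely the $\mathbb{A}^1$-homotopy argument the paper gives in the paragraphs preceding Theorem~\ref{thm:sp}: the commutativity of the left square in the diagram of fiber sequences, together with the identification of the middle vertical map ${\op{B}}\op{Sp}_{p-1}\to{\op{B}}\op{SL}_p$ as ``underlying bundle plus trivial line'', forces Mohan Kumar's module to split as $\underline{Q}\oplus\mathcal{O}$ with $Q$ symplectic. Your care about naturality of the representability bijection in the structure group is appropriate and is indeed what makes the passage from homotopy classes to actual torsors go through.

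It is worth noting that the paper also records, immediately after the corollary, a completely elementary alternative (attributed to a referee): for a unimodular row $\underline{a}=(a_1,\dots,a_{p+1})$ of even length, the row $(-a_2,a_1,-a_4,a_3,\dots,-a_{p+1},a_p)$ lies in the kernel of the surjection defined by $\underline{a}$, so the stably free module given by that kernel visibly splits off a free line; the complement then carries a symplectic form, cf.~\cite[\S 3]{fasel:sphere}. This bypasses $\mathbb{A}^1$-homotopy and representability altogether and works over any base ring. Your approach, on the other hand, is the one that fits naturally into the paper's narrative and explains \emph{why} the splitting and symplectic structure exist from the viewpoint of the fiber sequences, and it is what actually yields the nontriviality statement in Theorem~\ref{thm:sp}.
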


As pointed out by one of the referees,  for a unimodular row $\underline{a}:=(a_1,\dots,a_{p+1})$ we have another unimodular row $(-a_2,a_1,-a_4,a_3,\dots,-a_{p+1},a_p)$ which is in the kernel of the surjection defined by $\underline{a}$. This implies that the vector bundle defined by the unimodular row $\underline{a}$ splits off a trivial line bundle such that the remaining vector bundle has a symplectic structure, cf. \cite[\S 3]{fasel:sphere}. 

\subsection{Orthogonal groups}

Via the sporadic isomorphisms, the stably trivial symplectic bundles provide stably trivial torsors for low-rank spin groups. 

The first type of examples arises from the rank 3 stably free modules of Mohan Kumar. As discussed above, these modules split as direct sum of a trivial line bundle and a nontrivial oriented rank 2 bundle. Via the low-dimensional sporadic isomorphism $\op{SL}_2\cong\op{Spin}(3)$, the rank 2 vector bundles correspond to $\op{Spin}(3)$-torsors. These torsors are stably trivial in the sense that they become trivial after stabilization to $\op{Spin}(5)$. In fact, the spin torsors induce non-split quadratic forms which become split after adding a hyperbolic plane. A detailed discussion of these examples and the relevant $\mathbb{A}^1$-homotopy computations can be found in \cite{hyperbolic-dim3}, cf. in particular Example 5.18. 

In a similar way, the low-dimensional sporadic isomorphism $\op{SL}_2\times\op{SL}_2\cong\op{Spin}(4)$, the above rank 2 stably trivial vector bundles also give rise to $\op{Spin}(4)$-torsors which become trivial after stabilization to $\op{Spin}(5)$. 

Finally, there is a third type of stably trivial spin torsors. On the one hand, $\mathbf{K}^{\op{MW}}_2$ appears in the $\mathbb{A}^1$-fundamental groups of $\op{Spin}(n)$ for $3\leq n\leq 5$, and on the other hand, the $\mathbb{A}^1$-fundamental group of $\op{Spin}(n)$ is $\mathbf{K}^{\op{M}}_2$ for $n\geq 6$. Consequently, the rank 2 vector bundles detected by elements in $\ker\left(\widetilde{\op{CH}}^2(X)\to\op{CH}^2(X)\right)$ (as discussed in Section~\ref{sec:mk2}) can be used to construct stably trivial spin torsors. A detailed discussion of these examples can be found in \cite{hyperbolic-dim3}, cf. in particular Example 5.11.

\subsection{Exceptional group $\op{G}_2$}

The stably free vector bundles of Mohan Kumar can also be used to provide examples of $\op{G}_2$-torsors which become trivial after extending the structure group along the homomorphism $\op{G}_2\to\op{Spin}(7)$. 

The norm form of the split octonion algebra gives rise to a homomorphism $\op{G}_2\to\op{Spin}(7)$, and the quotient can be identified as $\op{Spin}(7)/\op{G}_2\cong\op{Q}_7$. Consequently, there is an  $\mathbb{A}^1$-fiber sequence 
\[
\op{Q}_7\to{\op{B}}_{\op{Nis}}\op{G}_2\to{\op{B}}_{\op{Nis}}\op{Spin}(7).
\]
Mohan Kumar's construction provides a morphism $Y\cap Z\to \op{Q}_7$, and the composition with the map $\op{Q}_7\to{\op{B}}_{\op{Nis}}\op{G}_2$ classifies a $\op{G}_2$-torsor whose associated $\op{Spin}(7)$-torsor is trivial. 

A more explicit description of the torsor can be given as follows. The Zorn vector matrices provide a construction of $\op{G}_2$-torsors from oriented rank 3 vector bundles. If $k$ is an algebraically closed field, Mohan Kumar's construction provides oriented rank 3 vector bundles over a 4-dimensional smooth affine variety over $k(T)$, or by clearing denominators over a 5-dimensional smooth affine variety over $k$. Applying the Zorn vector matrix construction to these vector bundles provides $\op{G}_2$-torsors over the varieties constructed by Mohan Kumar. 

As discussed in \cite{octonion}, $\op{H}^3_{\op{Nis}}(X,\bm{\pi}^{\mathbb{A}^1}_3{\op{B}}_{\op{Nis}}\op{G}_2)\cong \op{H}^3_{\op{Nis}}(X,\mathbf{K}^{\op{M}}_4/3)$. Since Mohan Kumar's vector bundles are detected by non-trivial classes in the latter cohomology group, the $\op{G}_2$-torsor described above is in fact non-trivial. This provides examples of octonion algebras with trivial norm form over smooth affine varieties of smallest possible dimension. Full details for the proofs of the above assertions can be found in \cite[Section 4.3]{octonion}.


\begin{thebibliography}{AHW17}

\bibitem[ADF15]{AsokDoranFasel}
A. Asok, B. Doran and J. Fasel. Smooth models of motivic spheres and the clutching construction. Int. Math. Res. Not. IMRN, 2016. 

\bibitem[AF14a]{AsokFasel}
A. Asok and J. Fasel. A cohomological classification of vector bundles on smooth affine threefolds. Duke Math. J. 163 (2014), 2561--2601. 

\bibitem[AF14b]{AsokFaselSpheres}
A. Asok and J. Fasel. Algebraic vector bundles on spheres. J. Topology 7 (2014), 894--926.

\bibitem[AF15]{AsokFaselSplitting}
A. Asok and J. Fasel. Splitting vector bundles outside the stable range and r$\mathbb{A}^1$-homotopy sheaves of punctured affine space. J. Amer. Math. Soc. 28 (2015), 1031--1062.

\bibitem[AF16]{AsokFaselCohomotopy}
A. Asok and J. Fasel. Euler class groups and stable motivic cohomotopy. Preprint, arXiv:1601.05723.


\bibitem[AHWI]{gbundles}
A. Asok, M. Hoyois and M. Wendt. Affine representability results in $\mathbb{A}^1$-homotopy theory I: vector bundles. Duke Math. J. 166 (2017), 1923--1953.

\bibitem[AHWII]{gbundles2}
A. Asok, M. Hoyois and M. Wendt. Affine representability results in $\mathbb{A}^1$-homotopy theory II: principal bundles and homogeneous spaces. Geom. \& Topol. 22 (2018), 1181--1225.

\bibitem[AHW17]{octonion}
A. Asok, M. Hoyois and M. Wendt. Generically split octonion algebras and $\mathbb{A}^1$-homotopy theory. Preprint, arXiv:1704.03657v1.

\bibitem[BFS14]{bhatwadekar:fasel:sane}
S.M. Bhatwadekar, J. Fasel and S. Sane. Euler class groups and 2-torsion elements. J. Pure Appl. Algebra 218 (2014), no. 1, 112--120. 

\bibitem[Fas11a]{fasel:unimodular}
J. Fasel. Some remarks on orbit sets of unimodular rows. Comment. Math. Helv. 86 (2011), 13--39. 

\bibitem[Fas11b]{fasel:sphere}
J. Fasel. Projective modules over the real algebraic sphere of dimension $3$. J. Alg. 325 (2011), 18--33.

\bibitem[Fas13]{fasel:ij}
J. Fasel. The projective bundle theorem for $\mathbf{I}^j$-cohomology. J. K-theory 11 (2013), 413--464.

\bibitem[FRS12]{fasel:rao:swan}
J. Fasel, R.A. Rao and R.G. Swan. On stably free modules over affine algebras. Publ. Math. Inst. Hautes {\'E}tudes Sci. 116 (2012), 223--243. 

\bibitem[FS08]{fasel:srinivas}
J. Fasel and V. Srinivas. A vanishing theorem for oriented intersection multiplicities. Math. Res. Lett. 15 (2008), no. 3, 447--458. 

\bibitem[HK17]{hogadi:kulkarni}
A. Hogadi and G. Kulkarni. Gabber's presentation lemma for finite fields. To appear in J. reine angew. Math., arXiv:1612.09393.

\bibitem[Hoy15]{hoyois}
M. Hoyois. From algebraic cobordism to motivic cohomology. J. reine angew. Math. 702 (2015), 173--226.

\bibitem[MK85]{mohan:kumar}
N. Mohan Kumar. Stably free modules. Amer. J. Math. 107 (1985), 1439--1443. 

\bibitem[Mor12]{MField}
F. Morel. \emph{$\mathbb{A}^1$-algebraic topology over a field}, volume 2052 of Lecture Notes in Mathematics. Springer, Heidelberg, 2012. 

\bibitem[Sch17]{schlichting}
M. Schlichting. Euler class groups and the homology of elementary and special linear groups. Adv. Math. 320 (2017), 1--81.

\bibitem[Wen18]{hyperbolic-dim3}
M. Wendt. On stably trivial spin torsors over low-dimensional schemes. Preprint, arXiv:1704.07768, to appear in Q.J. Math. 

\end{thebibliography}
\end{document}